\newtheorem{Thm}{Theorem}
\newtheorem{Prop}{Proposition}[section]
\newtheorem{Lemma}[Prop]{Lemma}
\newtheorem{Cor}[Prop]{Corollary}
\newtheorem{Def}[Prop]{Definition}
\newtheorem*{Def*}{Definition}
\theoremstyle{remark}
\numberwithin{equation}{section}
\newcommand{\Wie}{\operatorname{W}}
\newcommand{\WL}{\operatorname{WL}}
\newcommand{\WR}{\operatorname{WR}}
\title[TFPLs of Excess 2]{Triangular Fully Packed Loop Configurations of Excess 2}
\author{Sabine Beil}
\address{Sabine Beil, Universit\"at Wien, Fakult\"at f\"ur Mathematik, Oskar-Morgenstern-Platz~1, 1090 Wien, Austria}
\email{sabine.beil@univie.ac.at}
\thanks{Supported by the Austrian Science Foundation FWF, START grant Y463.}
\begin{document}
\maketitle

\begin{abstract}
Triangular fully packed loop configurations (TFPLs) came up in the study of fully packed loop configurations on a square (FPLs) corresponding to 
link patterns with a large number of nested arches. To a TFPL is assigned a triple $(u,v;w)$ of $01$-words encoding its boundary conditions which must necessarily satisfy that $d(u)+d(v)\leq d(w)$, where $d(u)$ denotes the number of inversions in $u$. 
Wieland gyration, on the other hand, was invented to show the rotational invariance of the numbers $A_\pi$ of FPLs corresponding to a given link pattern $\pi$. 
Later, Wieland drift -- a map on TFPLs that is based on Wieland gyration -- was defined. 
The main contribution of this article is a linear expression for the number of TFPLs with boundary $(u,v;w)$ where \hbox{$d(w)-d(u)-d(v)=2$} in terms of numbers of stable TFPLs, that is, TFPLs invariant under Wieland drift.
This linear expression is consistent with already existing enumeration results for TFPLs with boundary $(u,v;w)$ where \hbox{$d(w)-d(u)-d(v)=0,1$}.
\end{abstract}

\section{Introduction}

The basis for this article is the \textit{fully packed loop model} that has its origin in the \textit{six-vertex} model (which is also called \textit{square ice} model) of statistical mechanics; 
a fully packed loop configuration (FPL) of size $n$ is a subgraph $F$ of the $n\times n$-square grid together with $2n$ external edges such that each of the $n^2$ vertices is of 
degree~$2$ in $F$ and every other external edge is occupied by $F$ starting with the topmost horizontal external edge on the left side. See Figure~\ref{Fig:FPL} for an example. 
\begin{figure}[tbh]
 \includegraphics[width=.6\textwidth]{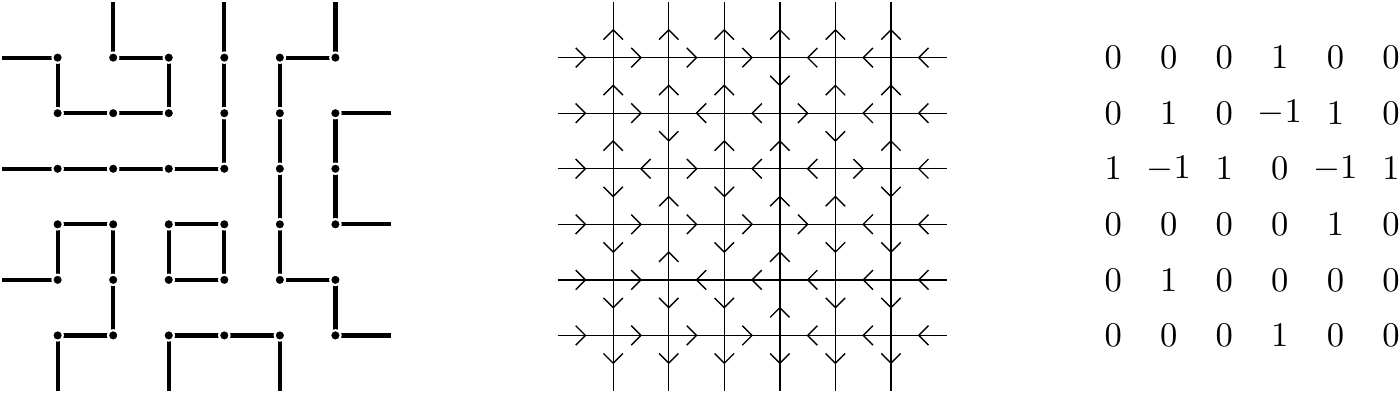}
 \caption{An FPL, a six vertex configuration and an ASM.}
 \label{Fig:FPL}
\end{figure}
FPLs are significant to algebraic combinatorics due to their one-to-one correspondence to \textit{alternating sign matrices} (ASMs). This is why FPLs of size $n$ are enumerated by the famous formula for the number of 
ASMs of size $n$ proved in~\cite{Zeilberger}. 

In contrast to alternating sign matrices, FPLs allow a refined study in dependency on the connectivity of the occupied external edges 
(these connections are encoded as a \textit{link pattern}). The study of FPLs having a link pattern with \textit{nested arches} is an example of one such refined study; it was conjectured in~\cite{Zuber} and later 
proved in~\cite{CKLN} that the number of FPLs having a fixed link pattern $\pi\cup m$ consisting of a link pattern $\pi$ of size $n$ and $m$ nested arches is polynomial in $m$. In the course of the proof of this 
conjecture \textit{triangular fully packed loop configurations} (TFPLs) came up. To be more precise, the following expression for the number $A_\pi(m)$ of FPLs having link pattern $\pi\cup m$ 
including numbers $t_{u,v}^w$ of TFPLs satisfying certain boundary conditions encoded by a triple $(u,v;w)$ of $01$-words was shown:
\begin{equation}
A_\pi(m)=\sum\limits_{u,v\in\mathcal{D}_n} P_{\lambda(u)}(n) t_{u',v'}^{\textbf{w}(\pi)} P_{\lambda(v)'}(m-2n+1),
\label{Eq:FPL_nested_arches}\end{equation}
where the sum runs over all Dyck words $u,v$ of length $2n$, $u'$ denotes the $01$ word obtained from a Dyck word $u$ by deleting the first $0$ and the last $1$, $\textbf{w}(\pi)$ denotes the Dyck word corresponding to 
the link pattern $\pi$, $\lambda(u)$ denotes the Young diagram associated with a $01$ word $u$, $\lambda'$ denotes the conjugate of a Young diagram $\lambda$ and 
\begin{equation}P_\lambda(x)=\prod\limits_{\mathfrak{C}\in\lambda}\frac{x+c(\mathfrak{C})}{h(\mathfrak{C})}
\notag\end{equation}
with $c(\mathfrak{C})$ being the content of the cell $\mathfrak{C}$ and $h(\mathfrak{C})$ the hook length of $\mathfrak{C}$. 
Apperantly, Equation~(\ref{Eq:FPL_nested_arches}) motivates the study of TFPLs and the numbers $t_{u,v}^w$. Another motivation for their study comes from the many nice properties of TFPLs which have been discovered 
since the emergence of TFPLs, see~\cite{Thapper},~\cite{Nadeau1} and~\cite{TFPL}. An example of one such property is that the boundary $(u,v;w)$ of a TFPL has to fulfill that $d(u)+d(v)\leq d(w)$, 
where $d(\omega)$ denotes the number of inversions in a word $\omega$; the integer 
\begin{equation}
\operatorname{exc}(u,v;w)=d(w)-d(u)-d(v)
\notag\end{equation} 
is said to be the \textit{excess} of $u,v,w$. To study TFPLs with respect to the excess of their boundary turned out to be fruitful; in~\cite{TFPL} enumeration results for TFPLs with boundary $(u,v;w)$ where $exc(u,v;w)=0,1$ 
were proved.

\begin{figure}[tbh]
\includegraphics[width=.4\textwidth]{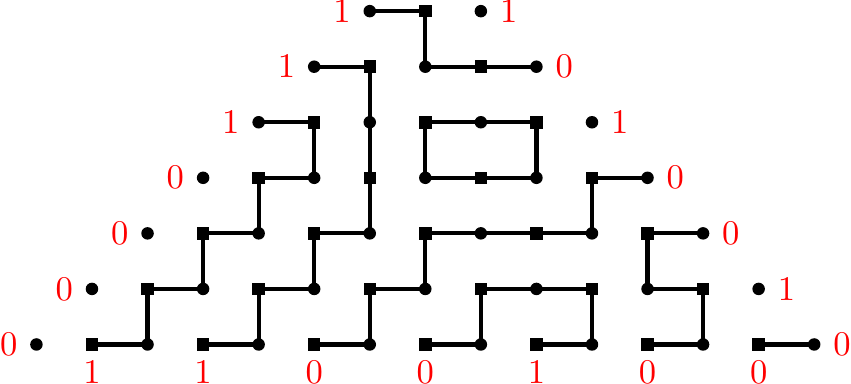}
 \caption{A triangular fully packed loop configuration with boundary $(0000111,1010010;1100100)$.}
\end{figure}

\textit{Wieland gyration}, on the other hand, is an operation on FPLs that was invented in~\cite{Wieland1} to prove the rotational invariance of the numbers $A_\pi$ of FPLs corresponding to given link patterns $\pi$. 
Later it was heavily used by Cantini and Sportiello~\cite{CanSport} to prove the Razumov--Stroganov conjecture. In connection with TFPLs, Wieland gyration first appeared in~\cite{Nadeau1} 
following work of~\cite{Thapper}, after which \textit{Wieland drift} was introduced in~\cite{WielandDrift} as the natural definition of Wieland gyration for TFPLs. In contrast to Wieland gyration, Wieland drift 
is not an involution. It was shown in~\cite{WielandDrift} that Wieland drift is eventually periodic with period~$1$.\\

This article will focus on TFPLs with boundary $(u,v;w)$ where $exc(u,v;w)=2$. The main contribution of this paper will be a linear expression for $t_{u,v}^w$ in terms of numbers of \textit{stable} TFPLs, 
that is, TFPLs invariant under the application of Wieland drift. This linear expression is consistent with the already existing enumeration results for TFPLs with boundary $(u,v;w)$ where $exc(u,v;w)=0,1$. 

To give the exact formulation of the main result of this article further notation is needed: let $\sigma$ and $\tau$ be two $01$-words of length $N$. Then  
\begin{itemize}
 \item[--] $\vert \omega\vert_i$ will denote the number of occurrences of $i$ in $\sigma$;
 \item[--] it will be written $\sigma\leq \tau$ if $\vert \sigma_1\cdots\sigma_m\vert_1\leq \vert \tau_1\cdots\tau_m\vert_1$ for all $1\leq m\leq N$;
\item[--] $\overleftarrow{\sigma}$ will denote the word $\sigma_N\cdots\sigma_1$, $\overline{\sigma}$ will denote the word
$\overline{\sigma_1}\cdots\overline{\sigma_n}$ where $\overline{0}=1$ and $\overline{1}=0$ and $\sigma^\ast$ will denote the word $\overleftarrow{\overline{\sigma}}$.
 \end{itemize}

\begin{Def*}
Let $\sigma$ and $\sigma^+$ be two $01$-words of the same length wich satisfy $\sigma\leq\sigma^+$ and $d(\sigma^+)-d(\sigma)\leq 2$. Then set 
\begin{equation}
g_{\sigma,\sigma^+}= \begin{cases} 1 & ,if\, \sigma^+=\sigma;\\
\vert\sigma_R\vert_1+1 & ,if\,\, \sigma=\sigma_L01\sigma_R \,\, and\,\,\sigma^+=\sigma_L10\sigma_R;\\
(\vert\sigma_R\vert_1+1)(\vert\sigma_R\vert_1+\vert\sigma_M\vert_1+1) & ,if\,\,\sigma=\sigma_L01\sigma_M01\sigma_R \,\, and \,\, \sigma^+=\sigma_L10\sigma_M10\sigma_R;\\
\frac{(\vert\sigma_R\vert_1+1)\vert\sigma_R\vert_1}{2} & ,if\,\,\sigma=\sigma_L001\sigma_R \,\, and\,\,\sigma^+=\sigma_L100\sigma_R;\\
\frac{(\vert\sigma_R\vert_1+2)(\vert\sigma_R\vert_1+1)}{2} & ,if\,\,\sigma=\sigma_L011\sigma_R \,\, and\,\,\sigma^+=\sigma_L110\sigma_R,\\
\end{cases}
\notag\end{equation}
where in each case $\sigma_L$, $\sigma_M$ and $\sigma_R$ are appropriate $01$-words.
\end{Def*}

In the following, denote by $s_{u^+,v^+}^w$ the number of stable TFPLs with boundary $(u^+,v^+;w)$. 

\begin{Thm}\label{Thm:Expressing_in_stable_TFPLs_exc_2} Let $u,v,w$ be words of the same length such that $d(w)-d(u)-d(v)=2$. Then 
\begin{equation}
t_{u,v}^w=\sum\limits_{\substack{u^+\geq u,\, v^+\geq v:\\ \operatorname{exc}(u^+,v^+;w)\geq 0}} g_{u,u^+}\ g_{v^\ast,(v^+)^\ast}\ s_{u^+,v^+}^w.
\label{Eq:Thm_Expressing_in_stable_TFPLs_exc_2}
\end{equation} 
\end{Thm}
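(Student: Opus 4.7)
The plan is to construct a map from TFPLs with boundary $(u,v;w)$ to stable TFPLs via iterated Wieland drift, and then to enumerate its fibers. Since Wieland drift on TFPLs is eventually periodic of period $1$ \cite{WielandDrift}, applying it sufficiently many times to any TFPL $T$ with boundary $(u,v;w)$ yields a stable TFPL $\Phi(T)$ whose boundary has the form $(u^+,v^+;w)$ with $u^+\geq u$, $v^+\geq v$ and $\operatorname{exc}(u^+,v^+;w)\geq 0$. Since $\operatorname{exc}(u,v;w)=2$, the total inversion increase $d(u^+)-d(u)+d(v^+)-d(v)$ lies in $\{0,1,2\}$, so the pair $(u,u^+)$ on the left and $(v^\ast,(v^+)^\ast)$ on the right must together realize one of the patterns enumerated in the definition of $g_{\sigma,\sigma^+}$.

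The first task is a local analysis of Wieland drift at the boundary: one application of drift sends $u$ to a word obtained by a family of disjoint $01\to 10$ swaps, and similarly for $v^\ast$. Crucially, I would verify that the left boundary (encoded by $u$) and the right boundary (encoded by $v^\ast$) evolve independently under drift, so that the fiber $\Phi^{-1}(T^+)$ over a stable TFPL $T^+$ with boundary $(u^+,v^+;w)$ factors as a product of a ``left count'' (reverse-drift trajectories taking $u^+$ back to $u$) and a symmetric ``right count'' for $v^\ast$. Granting this factorization, Theorem~\ref{Thm:Expressing_in_stable_TFPLs_exc_2} reduces to identifying the left and right counts with $g_{u,u^+}$ and $g_{v^\ast,(v^+)^\ast}$ respectively.

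I would then verify the numerical identity case-by-case. For $u^+=u$ there is a unique preimage, matching $g=1$. For a single $01\to 10$ swap, the factor $|\sigma_R|_1+1$ counts the choices of ``witness'' $1$ to the right of the swap site, which is consistent with the excess $1$ enumeration of~\cite{TFPL}. The two-increment cases split according to whether the increase of $2$ is realized as two separated single swaps (yielding the nested-witness product $(|\sigma_R|_1+1)(|\sigma_R|_1+|\sigma_M|_1+1)$) or as a single contiguous pattern $001\to 100$ or $011\to 110$ (yielding the binomial-type counts $\tfrac{(|\sigma_R|_1+1)|\sigma_R|_1}{2}$ and $\tfrac{(|\sigma_R|_1+2)(|\sigma_R|_1+1)}{2}$ of unordered witness pairs).

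The main obstacle is justifying the independence of left and right boundary evolution together with the combinatorial analysis of the double-swap cases. A priori, a step of Wieland drift near the apex of the triangle could couple changes at the left and right boundaries; proving that this coupling does not contribute to the fiber counts is where most of the technical work is expected to lie. Once the boundary dynamics are controlled, reconciling the binomial counts with the geometry of the $001\to 100$ and $011\to 110$ patterns and showing consistency with the already established excess $0$ and excess $1$ formulas of \cite{TFPL} should follow from a detailed but systematic bookkeeping argument.
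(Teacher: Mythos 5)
Your overall strategy --- push every TFPL to a stable one and count trajectories --- is the right general flavour, but the map you propose does not exist in the form you describe, and this is a genuine gap rather than a technicality. Iterating left-Wieland drift fixes the left boundary word $u$ and only enlarges $v$; iterating right-Wieland drift fixes $v$ and only enlarges $u$. Hence no iteration of a single drift can land on a stable TFPL with boundary $(u^+,v^+;w)$ where \emph{both} $u^+>u$ and $v^+>v$, yet such cross terms occur in the sum in general. The paper's construction therefore does not simply iterate drift: it first establishes the dichotomy of Corollary~\ref{Cor:Path_drifter_exc_2}, namely $L(\mathfrak{d})+R(\mathfrak{d})=R_1(u^{R(\mathfrak{d})})+L_0(v^{L(\mathfrak{d})})+1$, so that each drifter $\mathfrak{d}$ satisfies exactly one of $L(\mathfrak{d})\le L_0(v^{L(\mathfrak{d})})$ and $R(\mathfrak{d})\le R_1(u^{R(\mathfrak{d})})$, and this decides whether $\mathfrak{d}$ is to be expelled through the right or through the left boundary. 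When the two drifters are to go to opposite boundaries, the map is not Wieland drift at all but a composition of the individual local moves $M_1^{\pm1},M_2^{\pm1},M_3^{\pm1}$ applied to each drifter separately; that this is always possible is Proposition~\ref{Prop:One_drifter_left_one_right_exc_2}, whose proof requires the classification of blockades (Lemmas~\ref{Lemma:Blockades_exc_2} and~\ref{Lemma:Distance_blockades_exc_2}). This is precisely the ``coupling'' you flag as the main obstacle, and it cannot be dismissed by an independence claim: the resolution is quantitative, showing that a blockade between $\mathfrak{d}$ and $\mathfrak{d}^\ast$ forces $L(\mathfrak{d})-L(\mathfrak{d}^\ast)=L_0(v^{L(\mathfrak{d})})-L_0(v^{L(\mathfrak{d}^\ast)})+1$, which is incompatible with the two drifters wanting to exit on opposite sides.

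Second, your identification of the fiber counts with $g_{u,u^+}$ and $g_{v^\ast,(v^+)^\ast}$ via ``witness'' heuristics is not substantiated. In the paper the trajectories are recorded as semi-standard Young tableaux of skew shape $\lambda(u^+)/\lambda(u)$ (resp.\ $\lambda(v^+)'/\lambda(v)'$) whose entry in a given cell is the number of drift steps plus one, and the inequality $R(\mathfrak{d})\le R_1(u^{R(\mathfrak{d})})$ translates exactly into the condition that the entry in the $i$-th column from the right is at most $i$; the sets $G_{\lambda,\lambda^+}$ of such bounded tableaux are what the quantities $g_{\sigma,\sigma^+}$ enumerate, which is where the products and the binomial-type expressions in the definition of $g$ come from. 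Without the bound supplied by Corollary~\ref{Cor:Path_drifter_exc_2} you have no way to see why the count is $\vert\sigma_R\vert_1+1$ rather than the number of all reverse-drift trajectories, nor how to construct the inverse map needed for the bijection of Theorem~\ref{Thm:Bijection_Exc_2_Stable}. So the proposal correctly identifies the shape of the argument but is missing its two load-bearing components: the per-drifter left/right dichotomy and the blockade analysis that makes the mixed case well defined.
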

For example, 
\begin{align}
t_{0011,0110}^{1100}=3=\sum\limits_{\substack{u^+\geq 0011,\, v^+\geq 0110:\\ \operatorname{exc}(u^+,v^+;w)\geq 0}} g_{0011,u^+}\ g_{1001,(v^+)^\ast}\ s_{u^+,v^+}^{1100}.
\notag\end{align}

By a result in~\cite{TFPL} it holds 
\begin{equation}
t_{u,v}^w=s_{u,v}^w,
\label{Eq:Thm_Expressing_in_stable_TFPLs_exc_0}
\end{equation}
if $\operatorname{exc}(u,v;w)=0$. For that reason, the linear expression for the number of TFPLs with boundary $(u,v;w)$ where $\operatorname{exc}(u,v;w)=1$ in terms of TFPLs of excess $0$ 
proved in~Theorem~6.16(5) in~\cite{TFPL} can be written as follows:

\begin{equation}
t_{u,v}^w=s_{u,v}^w+\sum\limits_{\substack{u^+> u:\\ \operatorname{exc}(u^+,v;w)=0}}g_{u,u^+} \ s_{u^+,v}^{w} + \sum\limits_{\substack{v^+> v:\\ \operatorname{exc}(u,v^+;w)=0}}g_{v^\ast,(v^\ast)^+} \ s_{u,v^+}^{w}. 
\label{Eq:Thm_Expressing_in_stable_TFPLs_exc_1}
\end{equation}

Summing up, the linear expression stated in Theorem~\ref{Thm:Expressing_in_stable_TFPLs_exc_2} is consistent with the already existing enumeration results for TFPLs with boundary $(u,v;w)$ where 
$\operatorname{exc}(u,v;w)=0,1$. This suggests a study of TFPLs with boundary $(u,v;w)$ where $\operatorname{exc}(u,v;w)\geq 3$ based on the methods presented in this article in order to obtain expressions for the 
numbers $t_{u,v}^w$ in terms of stable TFPLs.\\

\textcolor{blue}{A poster about this work will be presented at FPSAC 2015.}

\section{Preliminaries}\label{Sec:Preliminaries}

\subsection{Words and Young diagrams}
A \textit{word} $\omega$ of length $N$ is a finite sequence $\omega=\omega_1\omega_2\cdots \omega_N$ where $\omega_i\in \{0,1\}$ for all 
$1\leq i\leq N$. Given a word $\omega$ the number of occurrences of 0 (\textit{resp.} 1) in $\omega$ is denoted by $\vert \omega\vert_0$ (\textit{resp.} $\vert \omega\vert_1$). 
Furthermore, it is said that two words $\omega, \sigma$ of length $N$ with the same number of occurrences of 1 satisfy $\omega\leq\sigma$ if 
$\vert\omega_1\cdots\omega_n\vert_1\leq\vert\sigma_1\cdots\sigma_n\vert_1$ holds for all $1\leq n\leq N$. Finally, the number of inversions of $\omega$ that is pairs
$1\leq i<j\leq N$ satisfying $\omega_i=1$ and $\omega_j=0$ is denoted by $d(\omega)$.

Throughout this article, in a Young diagram empty columns and empty rows are allowed. With a word $\omega$ a Young diagram $\lambda(\omega)$ will be associated as follows: 
to a given word $\omega$ a path on the square lattice is constructed by drawing a $(0,1)$-step if $\omega_i=0$ and a $(1,0)$-step if $\omega_i=1$ for i from $1$ to $n$. Additionally, 
a vertical line through the path's starting point and a horizontal line through its ending point are drawn. Then the region enclosed by the lattice path and the two lines is a Young diagram 
which shall be the image of $\omega$ under $\lambda$. In Figure~\ref{Fig:BijectionWordsYoung}, an example of a word and its corresponding Young diagram is given. 
For two words $\omega$ and $\sigma$ of length $N$ it then holds $\omega\leq \sigma$ if and only if $\lambda(\omega)$ is contained in $\lambda(\sigma)$. 
Furthermore, the number of cells of $\lambda(\omega)$ equals $d(\omega)$.

\begin{figure}[tbh]
\centering
\includegraphics[width=.35\textwidth]{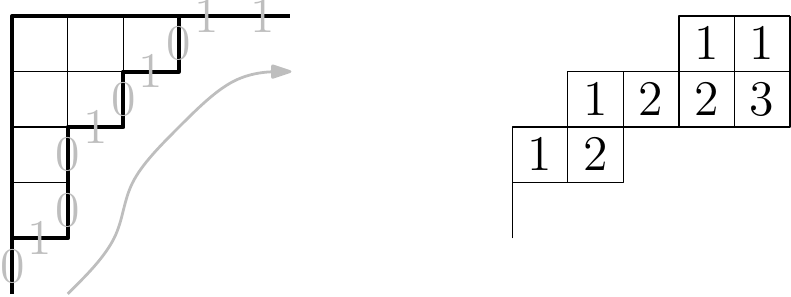}
\caption{The Young diagram $\lambda(0100101011)$ and a semi-standard Young tableau of skew shape $\lambda(011011100)/\lambda(001011011)$.}
\label{Fig:BijectionWordsYoung}
\end{figure}

There are skew shaped Young diagrams which play an important role in the context of Wieland drift: a skew shape is said to be a \emph{horizontal strip} (\emph{\textit{resp.} a vertical strip}) 
if each of its columns (\emph{\textit{resp.}} rows) contains at most one cell. 
Consider two words $\omega$ and $\sigma$ satisfying $\vert \omega\vert_1=\vert \sigma\vert_1$, $\vert\omega\vert_0=\vert \sigma\vert_0$ and $\omega\leq\sigma$. 
Then the skew shape \hbox{$\lambda(\sigma)/\lambda(\omega)$} is a horizontal strip (\emph{\textit{resp.}} a vertical strip) if and only if for each 
\hbox{$j\in\{1,\dots,\vert \omega\vert_1\}$} (\textit{resp.} for each \hbox{$j\in\{1,2,\dots,\vert\omega\vert_0\}$}) the following holds: 
{\it If $\omega_i$ is the \hbox{$j$-th} one (\emph{\textit{resp.}} zero) in $\omega$ then $\sigma_{i-1}$ or $\sigma_i$ (\emph{\textit{resp.}} $\sigma_{i}$ or $\sigma_{i+1}$) is the \hbox{$j$-th} one 
(\emph{\textit{resp.}} zero) in $\sigma$.}
In the following, if the skew shaped Young diagram \hbox{$\lambda(\sigma)/\lambda(\omega)$} is a horizontal strip (\textit{resp.} a vertical strip) it will be written 
\hbox{$\omega\stackrel{\mathrm{h}}{\longrightarrow}\sigma$} (\textit{resp.} \hbox{$\omega\stackrel{\mathrm{v}}{\longrightarrow}\sigma$}).

Semi-standard Young tableaux of skew shape 
$\lambda(\sigma)/\lambda(\omega)$ with entries $1,2,\dots,m$ are in bijection with sequences of Young diagrams  
\begin{equation}
\lambda(\omega)=\lambda(\tau^{0})\subseteq\lambda(\tau^{1})\subseteq\cdots\subseteq\lambda(\tau^{m-1})\subseteq\lambda(\tau^{m})=\lambda(\sigma),  
\notag\end{equation}
such that \hbox{$\tau^{i-1}\stackrel{\mathrm{h}}{\longrightarrow}\tau^{i}$} for each $1\leq i\leq m$. 
To be more precise, the horizontal strip $\lambda(\tau^{i})/\lambda(\tau^{i-1})$ gives the cells of the semi-standard Young tableau of skew-shape $\lambda(\sigma)/\lambda(\omega)$ that have entry $i$ for 
$1\leq i\leq m$. 
For instance, the semi-standard Young tableau of skew shape $\lambda(011011100)/\lambda(001011011)$ in Figure~\ref{Fig:BijectionWordsYoung} corresponds to the sequence 
\begin{equation}
 \lambda(001011011)\subseteq\lambda(010101110)\subseteq\lambda(011011010)\subseteq\lambda(011011100).
\notag\end{equation}

\subsection{Triangular fully packed loop configurations}\label{Subsec:TFPL_exc_2}
To give the definition of triangular fully packed loop configurations the following graph is needed:

\begin{Def}[The graph $G^N$] Let $N$ be a positive integer. The graph $G^N$ is defined as the induced subgraph of the square grid made up of $N$ consecutive centered rows of 
\hbox{$3,5,\dots, 2N+1$} vertices from top to bottom together with $2N+1$ vertical \emph{external} edges incident to the $2N+1$ bottom vertices. 
\end{Def}

\begin{figure}[tbh]
\centering
\includegraphics[width=.5\textwidth]{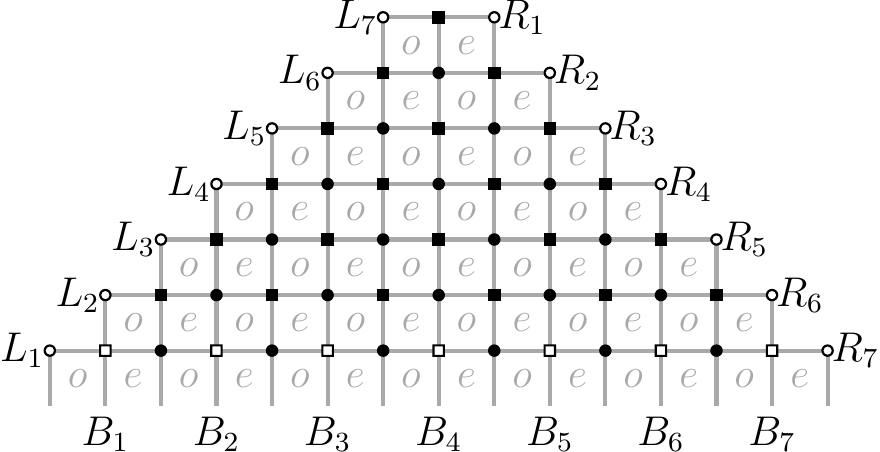}
\caption{The graph $G^7$.}
\label{Fig003}
\end{figure}

In Figure~\ref{Fig003}, the graph $G^7$ is depicted. 
From now on, the vertices of $G^N$ are partitioned into \textit{odd} and \textit{even} vertices in a chessboard manner where by convention the leftmost vertex of the top row of $G^N$ is odd.
In the figures, odd vertices are represented by circles and even vertices by squares. There are vertices of $G^N$ that play a special role:
let \hbox{$\mathcal{L}^N=\{L_1,L_2,\dots,L_N\}$} (\textit{resp.} \hbox{$\mathcal{R}^N=\{R_1,R_2,\dots,R_N\}$}) be the set made up of the vertices which are leftmost (\textit{resp.} rightmost) in each of the $N$ rows of $G^N$
and let $\mathcal{B}^N=\{B_1,B_2,\dots,B_N\}$ be the set made up of the even vertices of the bottom row of $G^N$. The vertices are numbered from left to right. 
Furthermore, the $N(N+1)$ unit squares of $G^N$ including external unit squares that have three surrounding
edges only are said to be the \emph{cells} of $G^N$. They are partitioned into \emph{odd} and \emph{even} cells in a chessboard manner where by convention the top left 
cell of $G^N$ is odd. 

\begin{Def}[Triangular fully packed loop configuration] \label{Def:TFPL}
Let $N$ be a positive integer.
A \emph{triangular fully packed loop configuration} (TFPL) of size $N$ is a subgraph $f$ of $G^N$ such that: 
\begin{enumerate}
 \item Precisely those external edges that are incident to a vertex in $\mathcal{B}^N$ are occupied by $f$. 
 \item The $2N$ vertices in $\mathcal{L}^N\cup\mathcal{R}^N$ have degree 0 or 1.
 \item All other vertices of $G^N$ have degree 2.
 \item A path in $f$ neither connects two vertices of $\mathcal{L}^N$ nor two vertices of $\mathcal{R}^N$.
\end{enumerate}
\end{Def}

\begin{figure}[tbh]
\centering
\includegraphics[width=.9\textwidth]{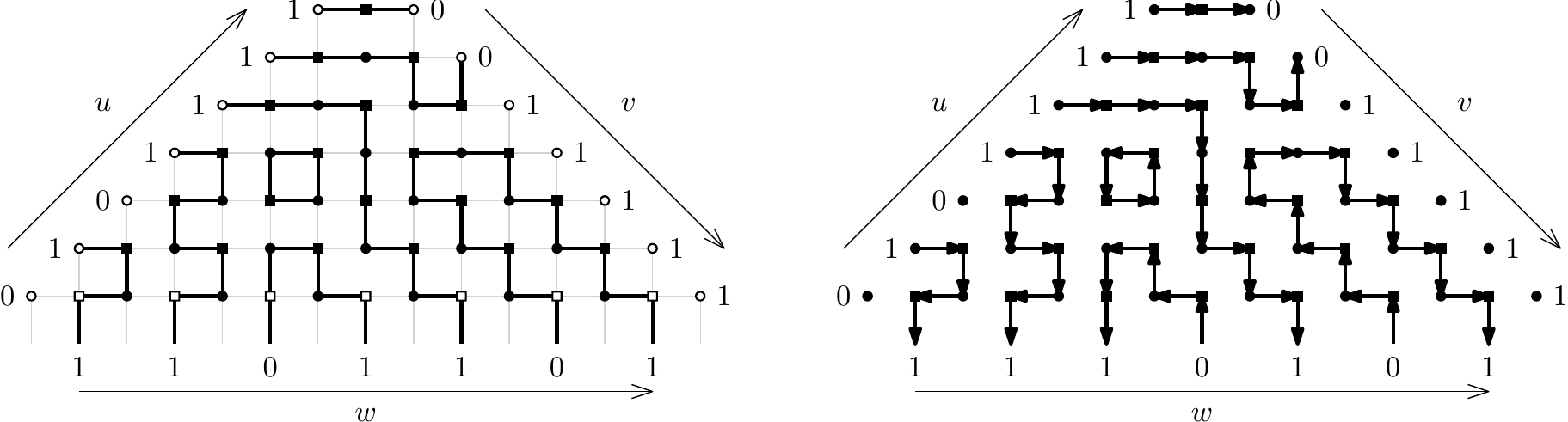}
\caption{A TFPL of size $7$ and an oriented TFPL of size $7$.}
\label{Fig:TFPLexample}
\end{figure}

An example of a TFPL is given in Figure~\ref{Fig:TFPLexample}. A \emph{cell of $f$ }is a cell of $G^N$ together with those of its surrounding edges that are occupied by $f$. 
To each TFPL of size $N$ is assigned a triple of words of length $N$.

\begin{Def}\label{Def:TFPLboundary}
Let $f$ be a TFPL of size $N$. The triple $(u,v;w)$ of words of length $N$ is assigned to $f$ as follows: 
\begin{enumerate}
 \item For $i=1,\dots,N$ set $u_i=1$ if the vertex $L_i\in\mathcal{L}^N$ has degree $1$ and $u_i=0$ otherwise. 
 \item For $i=1,\dots,N$ set $v_i=0$ if the vertex $R_i\in\mathcal{R}^N$ has degree $1$ and $v_i=1$ otherwise.
 \item For $i=1,\dots,N$ set $w_i=1$ if in $f$ the vertex $B_i\in\mathcal{B}^N$ is connected with a vertex in $\mathcal{L}^N$ or with a vertex $B_h$ for an $h<i$ and $w_i=0$ otherwise.  
\end{enumerate}
The triple $(u,v;w)$ is said to be the \emph{boundary} of $f$. Furthermore, the set of TFPLs with boundary $(u,v;w)$ is denoted by $T_{u,v}^w$ and its cardinality by $t_{u,v}^w$.
\end{Def}

For example, the triple \hbox{$(0101111,0011111;1101101)$} is the boundary of the TFPL depicted in Figure~\ref{Fig:TFPLexample}.
The definitions of both a TFPL and its boundary contain global conditions. Those can be omitted when adding an orientation to each edge of a TFPL.

\begin{Def}[Oriented triangular fully packed loop configuration]\label{Def:Oriented_TFPL}
An \textnormal{oriented TFPL} of size $N$ is a TFPL of size $N$ together with an orientation of its edges such that
the edges attached to $\mathcal{L}^N$ are outgoing, the edges attached to
$\mathcal{R}^N$ are incoming and all other vertices of $G^N$ are incident to an incoming and an outgoing edge.
\end{Def}

In Figure~\ref{Fig:TFPLexample}, an example of an oriented TFPL of size $7$ is given. In the underlying TFPL of an oriented TFPL condition~(4) can be omitted because the required orientations 
of the edges attached to a vertex of the left or right boundary prevent paths from returning to the respective boundary. 

\begin{Def}\label{Def:OrientedTFPLboundary} An oriented TFPL $f$ has boundary $(u,v;w)$ if the following hold:
\begin{enumerate}
 \item If the vertex $L_i\in\mathcal{L}^N$ has out-degree $1$ then $u_i=1$. Otherwise, $u_i=0$.
 \item If the vertex $R_i\in\mathcal{R}^N$ has in-degree $1$ then $v_i=0$. Otherwise, $v_i=1$. 
 \item If the external edge attached to the vertex $B_i\in\mathcal{B}^N$ is outgoing then $w_i=1$. Otherwise, $w_i=0$.
\end{enumerate} 
\end{Def}

While $u$ and $v$ coincide with the respective boundary word in the underlying ordinary TFPL this is not the case for $w$. 
Instead of the connectivity of the paths $w$ encodes the local orientation of the edges. Only in the case when in an oriented TFPL all paths between two vertices $B_i$ and $B_j$ of $\mathcal{B}^N$ 
are oriented from $B_i$ to $B_j$ if $i<j$ the boundary word $w$ coincides with the respective boundary word of the underlying TFPL. 
Hence, the \textit{canonical orientation} of a TFPL is defined as the orientation of the edges of the TFPL that satisfies the conditions in Definition~\ref{Def:Oriented_TFPL} 
and in addition that each path between two vertices $B_i,B_j\in\mathcal{B}^N$ is oriented from $B_i$ to $B_j$ if $i<j$ and that all closed paths are oriented clockwise.\\  

A triple $(u,v;w)$ that is the boundary of an ordinary or an oriented TFPL has to fulfill the following conditions: $\vert u\vert_0=\vert v\vert_0=\vert w\vert_0$, $u\leq w$, $v\leq w$ and $d(w)-d(u)-d(v)\geq 0$. 
These conditions were proved in \cite{CKLN,Thapper,TFPL}. The last condition gives rise to the following definition:

\begin{Def}[\cite{TFPL}]
Let $u,v,w$ be words of length $N$. Then the excess of $u,v,w$ is defined as
\begin{equation}
 exc(u,v;w)=d(w)-d(u)-d(v).
\notag\end{equation}
If $exc(u,v;w)=k$ then both an ordinary and an oriented TFPL with boundary $(u,v;w)$ are said to be of excess $k$.
\end{Def}

In \cite{TFPL}, the following interpretation of the excess of $u,v,w$ in terms of numbers of occurrences of certain local configurations in an oriented TFPL with boundary $(u,v;w)$ is proved: 

\begin{Prop}[{\cite[Theorem 4.3]{TFPL}}]\label{Prop:Comb_Int_Exc}
Let $f$ be an oriented TFPL with boundary $(u,v;w)$. Then
\begin{equation}\label{Excess}
\textnormal{exc}(u,v;w)= \vcenter{\hbox{\includegraphics[width=.011\textwidth]{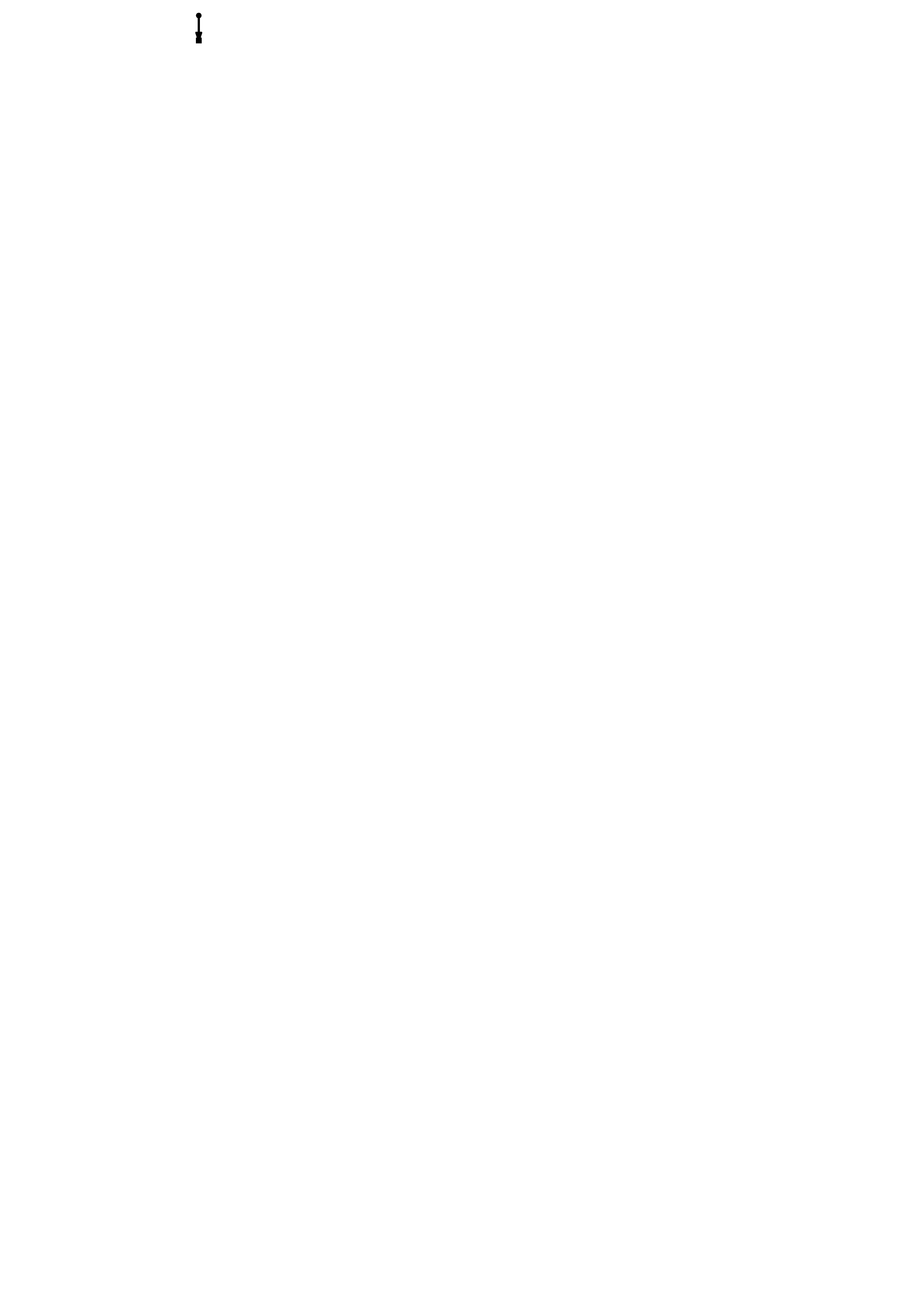}}}+ 
\vcenter{\hbox{\includegraphics[width=.011\textwidth]{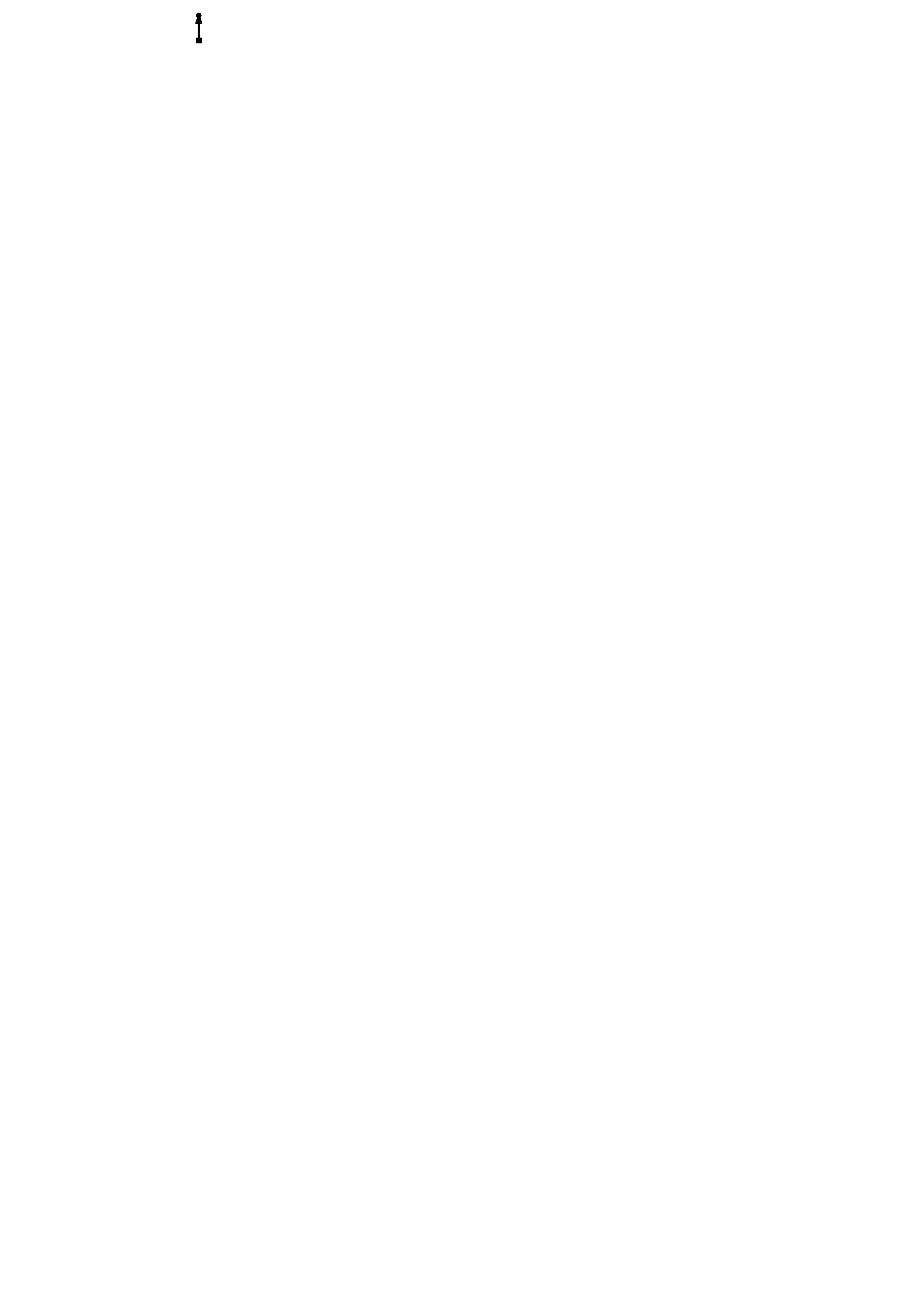}}}+\vcenter{\hbox{\includegraphics[width=.0825\textwidth]{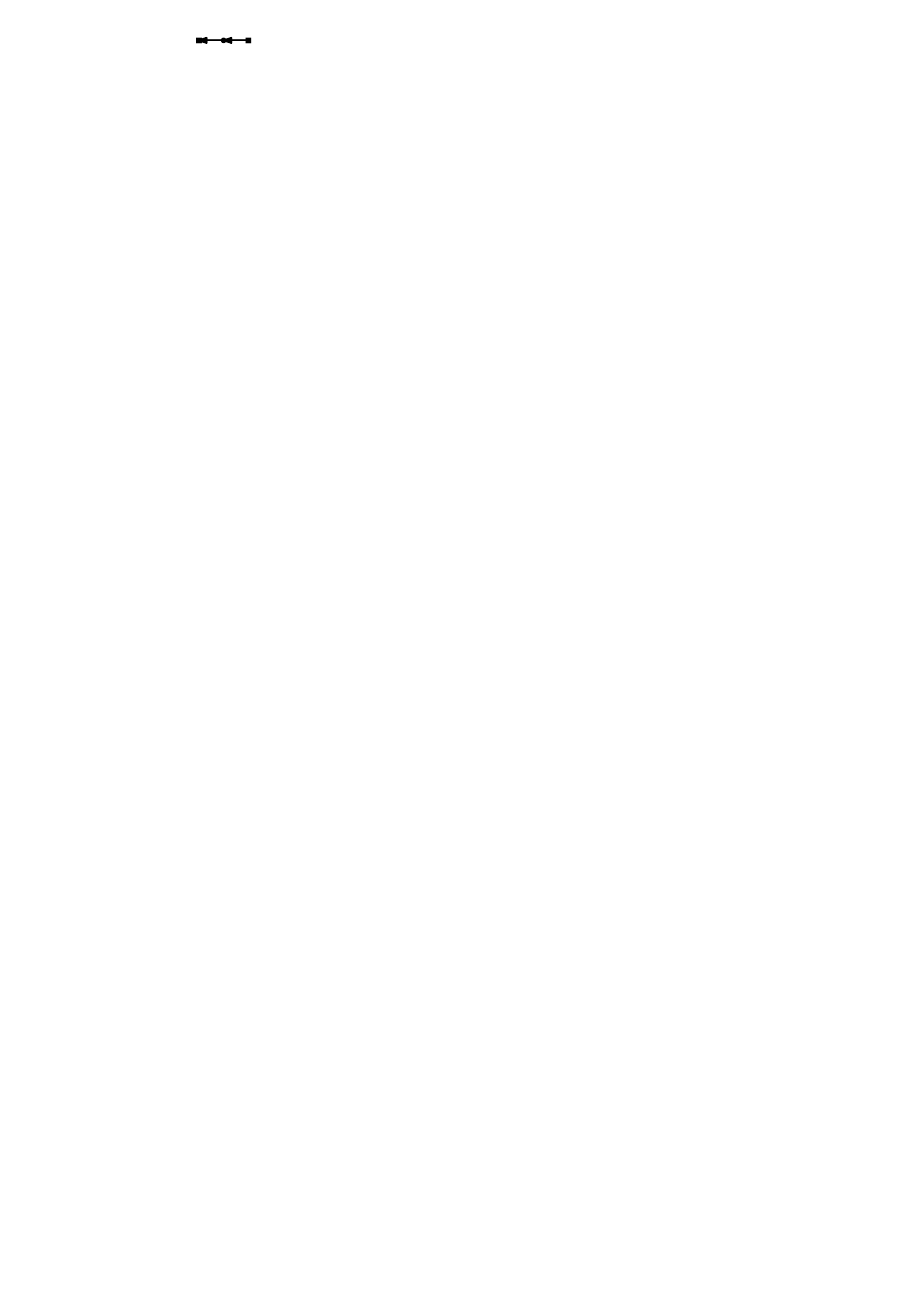}}}+
\vcenter{\hbox{\includegraphics[width=.0825\textwidth]{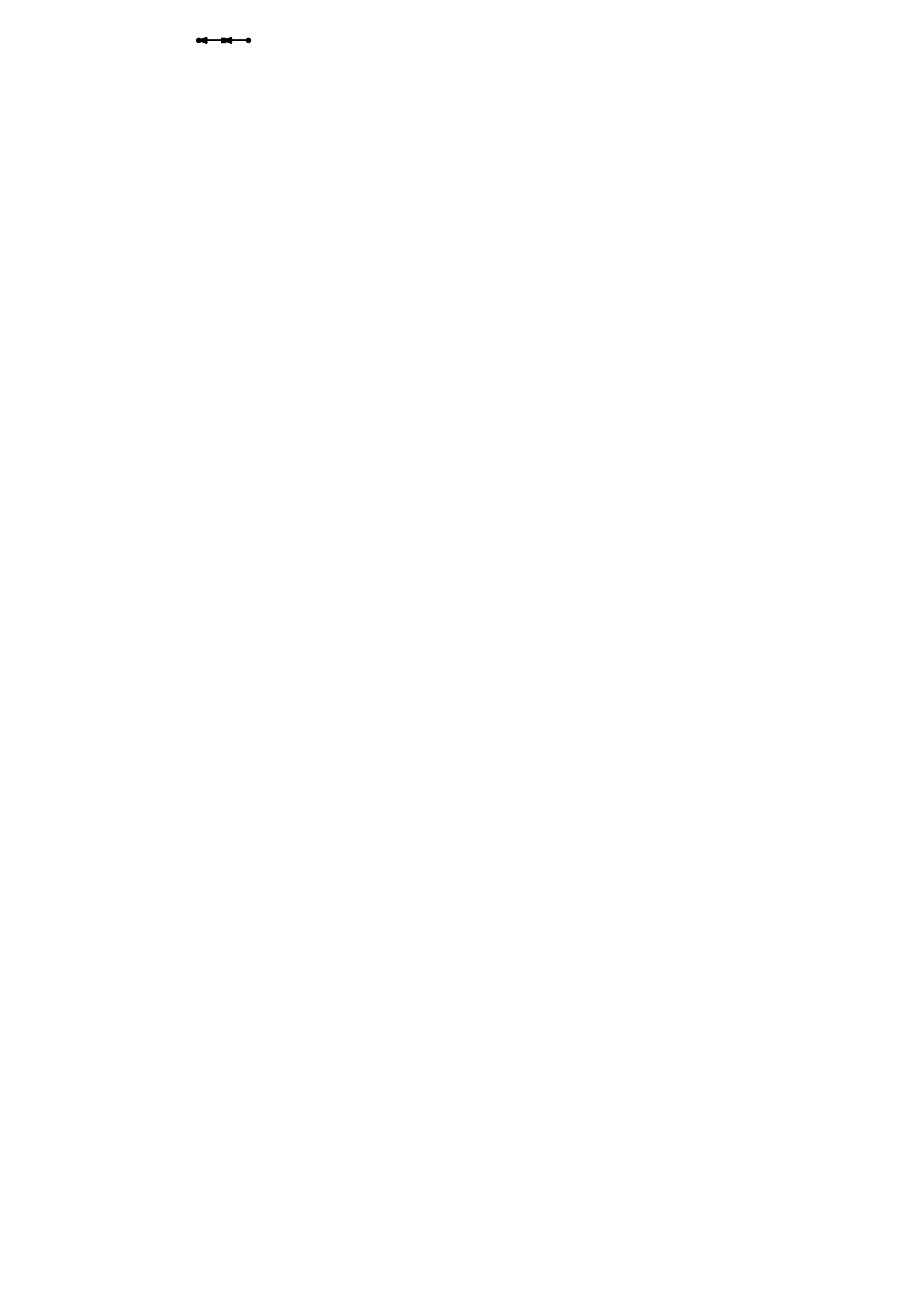}}}+\vcenter{\hbox{\includegraphics[width=.04675\textwidth]{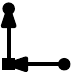}}}+
\vcenter{\hbox{\includegraphics[width=.04675\textwidth]{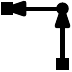}}}+\vcenter{\hbox{\includegraphics[width=.04675\textwidth]{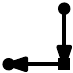}}}+
\vcenter{\hbox{\includegraphics[width=.04675\textwidth]{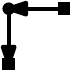}}}
\end{equation}
where by $\vcenter{\hbox{\includegraphics[width=.011\textwidth]{Expose5}}}$, $\vcenter{\hbox{\includegraphics[width=.011\textwidth]{Expose6}}}$, etc. the numbers of occurrences of the local configurations
$\vcenter{\hbox{\includegraphics[width=.011\textwidth]{Expose5}}}$, $\vcenter{\hbox{\includegraphics[width=.011\textwidth]{Expose6}}}$, etc. in $f$ are denoted.
\end{Prop}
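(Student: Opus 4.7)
My plan is to prove the excess formula by a slice-decomposition argument: define an integer-valued local excess $\epsilon(c)$ at each cell $c$ of $G^N$, depending only on the orientation pattern inherited by the edges bounding $c$, and then show that $\sum_c \epsilon(c)$ equals both the right-hand side of (\ref{Excess}) and $d(w)-d(u)-d(v)$.

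To identify the right-hand side with $\sum_c \epsilon(c)$, I would first enumerate all orientation patterns a cell can carry, subject to the degree-$2$ constraint at each interior vertex and the boundary conventions at $\mathcal{L}^N$, $\mathcal{R}^N$, and the bottom external edges. This list is finite and short. For each pattern I would set $\epsilon(c)=0$ except for the eight patterns listed in the statement, where I set $\epsilon(c)=1$. By construction, $\sum_c \epsilon(c)$ is then exactly the right-hand side of (\ref{Excess}).

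To identify $\sum_c \epsilon(c)$ with the excess, I would telescope over horizontal slices of $G^N$. For each $r = 0,1,\dots,N$, let $s_r$ be the $01$-word obtained by reading the orientations of the horizontal edges at height $r$ (east $=1$, west $=0$), suitably augmented by the orientations of the vertical edges at $L_r$ and $R_r$. The empty top slice gives $d(s_0) = 0$, and a check using the canonical-orientation conventions shows $d(s_N) = d(w)$. For each cell $c$ in row $r$ I would verify, by a case analysis over the cell orientation patterns catalogued above, that the change in inversion number between the portions of $s_{r-1}$ and $s_r$ spanned by $c$ equals $\epsilon(c)$; at the leftmost cell of row $r$ an additional correction of $-u_r\cdot(\text{suitable prefix count})$ appears, and similarly at the rightmost cell a $-v_r\cdot(\text{suitable suffix count})$ correction. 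Summing over $c$ and telescoping $r=1,\dots,N$ then collapses to $\sum_c \epsilon(c) = d(w) - d(u) - d(v)$.

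The main obstacle is the bookkeeping: first, the cell-by-cell case analysis must confirm that $\epsilon(c)$ really vanishes on every non-listed pattern; second, the left- and right-boundary corrections accrued across rows must aggregate exactly to $d(u)$ and $d(v)$ rather than to some twisted variant. Since $u$, $v$, and $w$ are encoded along three different sides of the triangle via three different conventions (out-degree at $L_i$, in-degree at $R_i$, and outward orientation of the external edge at $B_i$), the delicate point is pinning down the sign and orientation choices so that the telescoping closes cleanly and recovers precisely the excess.
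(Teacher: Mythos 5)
You should first be aware that the paper contains no proof of this statement to compare yours against: Proposition~\ref{Prop:Comb_Int_Exc} is imported verbatim as Theorem~4.3 of the cited reference \cite{TFPL} and is used as a black box throughout. Judged on its own terms, your proposal is a strategy outline rather than a proof, and it has a concrete structural problem before the bookkeeping even begins. The eight local configurations on the right-hand side of (\ref{Excess}) are oriented \emph{edge} configurations of several different extents (the paper displays them at three different widths, and the remark after Proposition~\ref{Thm:StableTFPL} shows that single edges --- drifters --- are among the objects they track); they are not orientation patterns of cells of $G^N$. Consequently the step ``set $\epsilon(c)=1$ exactly on the eight listed patterns, whence $\sum_c\epsilon(c)$ equals the right-hand side by construction'' does not typecheck: an occurrence consisting of a single edge lies on the boundary of two cells, and an occurrence spanning several edges is not contained in any single cell, so you would need to fix an assignment of occurrences to cells and then prove that no occurrence is counted twice or missed. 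That is not ``by construction''; it is a genuine counting argument that your plan omits.

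The second and larger gap is the one you yourself flag as ``the main obstacle'': the verification that the row-to-row change in the inversion number equals the number of marked configurations in that row, and that the accumulated left- and right-boundary corrections aggregate to exactly $d(u)$ and $d(v)$. This is the entire content of the theorem, and deferring it leaves nothing proved. Note also that your appeal to ``canonical-orientation conventions'' to obtain $d(s_N)=d(w)$ is misplaced: the statement concerns an arbitrary oriented TFPL, for which $w$ is read off directly from the orientations of the $N$ external edges at the even bottom vertices (Definition~\ref{Def:OrientedTFPLboundary}), with no canonical orientation in sight; moreover the bottom row of $G^N$ has $2N+1$ vertices, so extracting a length-$N$ word with $d(s_N)=d(w)$ from your slice already requires an argument. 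A telescoping scheme of roughly the shape you describe can likely be pushed through, but as written your proposal identifies the two hard points and stops at both of them.
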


\subsection{Blue-red path tangles}

In this subsection an alternative representation of oriented TFPLs is introduced, namely \textit{blue-red path tangles}. They came up in \cite{TFPL} and are crucial for the proofs given in this article.
Throughout this subsection, let $u,v,w$ be words of length $N$ such that $\vert u\vert_0=\vert v\vert_0=\vert w\vert_0=N_0$, $\vert u\vert_1=\vert v\vert_1=\vert w\vert_1=N_1$, 
$u\leq w$, $v\leq w$ and $ d(u)+d(v)\leq d(w)$.\\

A blue-red path tangle consists of an $N_0$-tuple of non-intersecting \textit{blue} lattice paths and an $N_1$-tuple of non-intersecting \textit{red} lattice paths. 
The blue lattice paths use steps $(-1,1)$, $(-1,-1)$ and $(-2,0)$, whereas the red lattice paths use steps $(1,1)$, $(1,-1)$ and $(2,0)$.
Furthermore, neither a blue nor a red lattice path goes below the $x$-axis. 
The $k$-th blue lattice path of an $N_0$-tuple of non-intersecting blue lattice paths starts in a certain fixed vertex $D_k$ and ends in a certain fixed vertex $E_k$. The definitions of the vertices $D_k$ and $E_k$ 
solely depend on the positions of the $k$-th zeroes in $w$ and $u$ and are omitted here. Instead, 
the vertices $D_1,\dots,D_{N_0}$ and $E_1,\dots,E_{N_0}$ are indicated with an example in Figure~\ref{Fig:Pathtangle_exc_2}.  
In the following, the set of $N_0$-tuples of non-intersecting blue lattice paths $(P_1,P_2,\dots,P_{N_0})$ where $P_k$ is a path from $D_k$ to $E_k$ 
is denoted by $\mathcal{P}(u,w)$. 
On the other hand, the $\ell$-th red path of an $N_1$-tuple of non-intersecting red lattice paths starts in a certain fixed vertex $D'_\ell$ and ends in a certain fixed vertex $E'_\ell$. The definitions of the vertices 
$D'_\ell$ and $E'_\ell$ solely depend on the positions of the $\ell$-th ones in $w$ and $v$ and are omitted here. Instead, $D'_\ell$ and $E'_\ell$ are indicated with and example in Figure~\ref{Fig:Pathtangle_exc_2}. 
In the following, the set of $N_1$-tuples of non-intersecting red paths $(P'_1,P_2',\dots,P_{N_1}')$
where $P'_\ell$ is a path from $D_\ell'$ to $E_\ell'$ is denoted by $\mathcal{P}'(v,w)$. 
 
\begin{figure}[tbh]
\includegraphics[width=.75\textwidth]{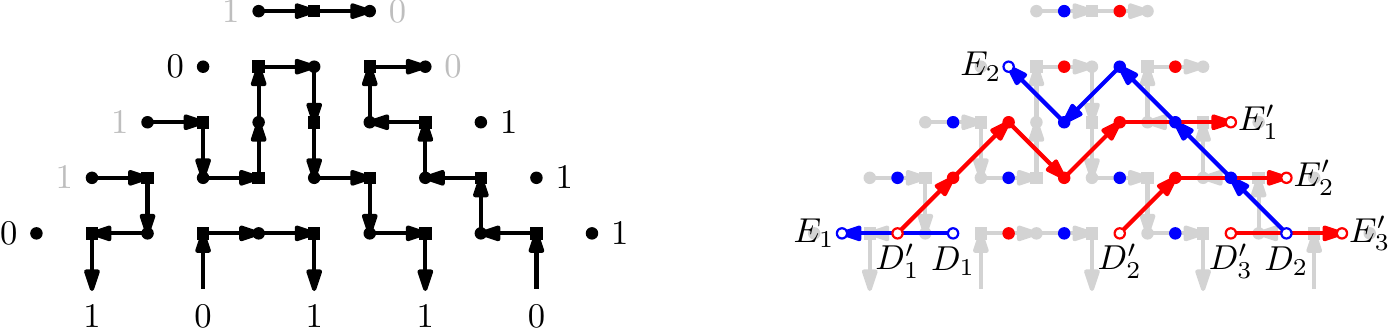}
\caption{An oriented TFPL with boundary $(01101,00111;10110)$ and its corresponding blue-red path tangle with boundary $(01101,00111;10110)$.}
\label{Fig:Pathtangle_exc_2}
\end{figure} 
 
\begin{Prop}[{\cite[Theorem 4.1]{TFPL}}]
The set of oriented TFPLs with boundary $(u,v;w)$ is in bijection with the set of pairs $(B,R)\in\mathcal{P}(u,w)\times\mathcal{P}'(v,w)$ that satisfy the two following conditions:
\begin{enumerate}
 \item No diagonal step of $R$ can cross a diagonal step of $B$.
 \item Each middle point of a horizontal step in $B$ (\textit{resp.} $R$) is used by a step in $R$ (\textit{resp.} $B$).
\end{enumerate}
The set of such configurations is denoted by \textnormal{BlueRed}$(u,v;w)$ and a configuration in \textnormal{BlueRed}$(u,v;w)$ is said to be a \textnormal{blue-red path tangle with boundary $(u,v;w)$}.
\end{Prop}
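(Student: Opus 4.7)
The plan is to use Wieland drift to reduce the enumeration of $T_{u,v}^w$ to that of stable TFPLs. Since Wieland drift is eventually periodic with period $1$~\cite{WielandDrift}, iterating it defines a map $\Phi$ from $T_{u,v}^w$ to the set of stable TFPLs, and a first step is to verify that $\Phi(f)$ has boundary $(u^+,v^+;w)$ with $u^+\geq u$, $v^+\geq v$ and $\operatorname{exc}(u^+,v^+;w)\leq\operatorname{exc}(u,v;w)$; preservation of the bottom word $w$ is the hallmark of Wieland drift on TFPLs. This yields
\begin{equation}
t_{u,v}^w \;=\; \sum_{\substack{u^+\geq u,\; v^+\geq v \\ \operatorname{exc}(u^+,v^+;w)\geq 0}}\; \sum_{f^\infty \in S_{u^+,v^+}^w} \bigl|\Phi^{-1}(f^\infty)\cap T_{u,v}^w\bigr|, \notag
\end{equation}
where $S_{u^+,v^+}^w$ denotes the set of stable TFPLs with boundary $(u^+,v^+;w)$. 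The theorem then reduces to the claim that, for every stable $f^\infty$ with boundary $(u^+,v^+;w)$, one has $\bigl|\Phi^{-1}(f^\infty)\cap T_{u,v}^w\bigr| = g_{u,u^+}\,g_{v^*,(v^+)^*}$.

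To establish this count, I would pass to the blue-red path tangle model of the previous subsection. In that model Wieland drift admits a local description as a pushing of the blue paths toward the left boundary and the red paths toward the right boundary, so a stable configuration $f^\infty$ has its blue paths pinned at the leftmost admissible positions determined by $u^+$ and its red paths pinned at the rightmost admissible positions determined by $v^+$. Consequently a preimage of $f^\infty$ under $\Phi$ with prescribed left boundary $u$ and right boundary $v$ is obtained by independently ``releasing'' certain blue paths to the right of their stable positions and certain red paths to the left of theirs, yielding a factorization
\begin{equation}
\bigl|\Phi^{-1}(f^\infty)\cap T_{u,v}^w\bigr| \;=\; \alpha_{u,u^+}\cdot\beta_{v,v^+}. \notag
\end{equation}
The involutive symmetry $\sigma\mapsto\sigma^*$ interchanges the two boundaries, identifying $\beta_{v,v^+}=\alpha_{v^*,(v^+)^*}$, so it suffices to prove $\alpha_{u,u^+}=g_{u,u^+}$.

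The identification $\alpha_{u,u^+}=g_{u,u^+}$ proceeds by a case analysis following the five cases in the definition of $g$. The trivial case $u^+=u$ and the single-swap case $\sigma_L01\sigma_R\mapsto\sigma_L10\sigma_R$ can be extracted from the excess-$0$ and excess-$1$ results~(\ref{Eq:Thm_Expressing_in_stable_TFPLs_exc_0}) and~(\ref{Eq:Thm_Expressing_in_stable_TFPLs_exc_1}) applied to the blue side alone. The three remaining cases---the two-disjoint-swap case $\sigma_L01\sigma_M01\sigma_R\mapsto\sigma_L10\sigma_M10\sigma_R$, and the two adjacent moves $001\to100$ and $011\to110$---are the genuinely new content of the theorem. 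In each of them the preimages correspond to choices of positions for a pair of blue paths displaced from their stable positions; the products and triangular numbers in the definition of $g$ arise from counting pairs of such positions, with the triangular form $\binom{|\sigma_R|_1+1}{2}$ and $\binom{|\sigma_R|_1+2}{2}$ reflecting a forced monotonicity when the two displaced paths have adjacent endpoints.

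The main obstacle will be establishing the independence needed for the product factorization in the two adjacent cases $001\to100$ and $011\to110$, since there the two interacting blue paths live in a shared neighborhood of the tangle and one must argue that their enumeration decouples from the red-side behavior. I would address this using the local combinatorial description of Wieland drift from~\cite{WielandDrift} and the interpretation of the excess from Proposition~\ref{Prop:Comb_Int_Exc}, which together should allow the two excess-contributing local configurations to be tracked separately from the rest of the tangle and thus force the factorization $\alpha_{u,u^+}\beta_{v,v^+}$ even when the two displaced paths on one side lie close to one another.
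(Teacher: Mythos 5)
Your proposal does not address the statement you were asked to prove. The statement is Proposition~2.9 of the paper (quoted from \cite[Theorem 4.1]{TFPL}): the assertion that oriented TFPLs with boundary $(u,v;w)$ are in bijection with pairs $(B,R)\in\mathcal{P}(u,w)\times\mathcal{P}'(v,w)$ subject to the non-crossing condition on diagonal steps and the midpoint-sharing condition on horizontal steps. What you have written instead is a proof sketch of Theorem~\ref{Thm:Expressing_in_stable_TFPLs_exc_2}, the linear expression for $t_{u,v}^w$ in terms of stable TFPLs: you set up the Wieland-drift map $\Phi$ onto stable configurations, reduce to counting fibres, and argue that the fibre cardinalities factor as $g_{u,u^+}\,g_{v^\ast,(v^+)^\ast}$. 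None of that establishes, or even engages with, the claimed bijection; you invoke the blue-red path tangle model as a known tool (``I would pass to the blue-red path tangle model of the previous subsection''), which is precisely the content you were supposed to prove.

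For comparison, the paper's proof of the Proposition is a short, entirely local construction. Given an oriented TFPL $f$, one inserts a blue vertex at the midpoint of every horizontal edge of $G^N$ with an odd left endpoint and a red vertex at the midpoint of every horizontal edge with an even left endpoint, then adds blue and red edges according to fixed local rules determined by the orientations of the edges of $f$. The blue data assembles into a tuple in $\mathcal{P}(u,w)$ and the red data into a tuple in $\mathcal{P}'(v,w)$; condition (1) of the Proposition is equivalent to each vertical edge of $f$ having a uniquely recoverable orientation, and condition (2) is equivalent to the same for each horizontal edge, which is what makes the correspondence invertible. A correct proof attempt needs to carry out (or at least outline) this construction and verify that the two conditions characterise exactly the image; your proposal contains no step in that direction.
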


An example of an oriented TFPL and its corresponding blue-red path tangle is given in Figure~\ref{Fig:Pathtangle_exc_2}.  

\begin{proof} Here the bijection in \cite{TFPL} is repeated: let $f$ be an oriented TFPL of size $N$ and with boundary $(u,v;w)$. 
As a start \textit{blue vertices} are inserted in the middle of each horizontal edge of $G^N$ which has an odd vertex to its left and \textit{red vertices} are inserted in the middle of
each horizontal edge of $G^N$ which has an even vertex to its left. Next, \textit{blue edges} are inserted as indicated in the left part of Figure~\ref{Fig:Blue_red_edges} and \textit{red edges} are inserted as 
indicated in the right part of Figure~\ref{Fig:Blue_red_edges}. 
\begin{figure}[tbh]
\centering
\includegraphics[width=.75\textwidth]{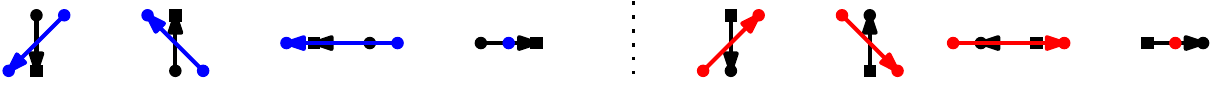}
\caption{From oriented TFPLs to blue-red path tangles.}
\label{Fig:Blue_red_edges}
\end{figure}
Then the blue vertices together with the blue edges give rise to an $N_0$-tuple of non-intersecting paths $B=(P_1,P_2,\dots,P_{N_0})$ in $\mathcal{P}(u,w)$ and the red vertices together with the red edges
give rise to an $N_1$-tuple of non-intersecting paths $R=(P'_1,P_2',\dots,P_{N_1}')$ in $\mathcal{P}'(v,w)$. The fact that 
no diagonal step of $R$ crosses a diagonal step of $B$ is equivalent to that there is a unique orientation of each vertical edge in 
$f$. On the other hand, the fact that each middle point of a horizontal step in $B$ (\textit{resp.} $R$) is used by a step in $R$ (\textit{resp.} $B$) is equivalent to that there is a unique orientation of each horizontal edge in $f$. 
Thus, $(B,R)\in\textnormal{BlueRed}(u,v;w)$.
\end{proof}

\section{Wieland drift}\label{Sec:WielandDrift}
The starting point of this section is the definition of Wieland gyration for 
fully packed loop configurations (FPLs) as introduced in \cite{Wieland1}. 
Wieland gyration is composed of local operations on all \emph{active} cells of an FPL: the active cells of an FPL can be chosen to be either all its odd cells or all its even cells.
Given an active cell $c$ of an FPL two cases have to be distinguished, namely whether $c$ contains precisely two edges of the FPL on opposite sides or not. If this is the case, Wieland gyration $\Wie$ 
leaves $c$ invariant. Otherwise, the effect of $\Wie$ on $c$ is that edges and non-edges of the FPL are exchanged. In Figure~\ref{Wieland}, the action of $\Wie$ on an active cell is illustrated. 

\begin{figure}[tbh]
\includegraphics[width=.325\textwidth]{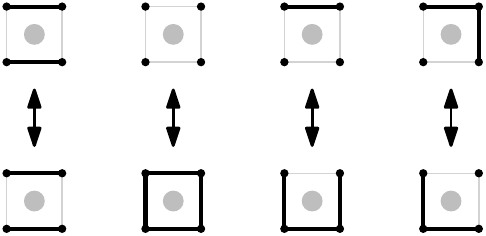}
\caption{Up to rotation, the action of $\Wie$ on an active internal cell of an FPL.}
\label{Wieland}
\end{figure} 

Also left- and right-Wieland drift will be composed of local operations on all \textit{active} cells of a TFPL. Similar to FPLs active cells of a TFPL are either chosen to be all its odd or all its even cells. 
Choosing all odd cells as active cells will lead to what will be defined as left-Wieland drift, whereas choosing all even cells as 
active cells will lead to what will be defined as right-Wieland drift. In the figures, the active cells of a TFPL will be indicated by gray circles.

\begin{Def}[Left-Wieland drift]\label{Def:LeftWieland}
Let $f$ be a TFPL with left boundary word $u$ and let $u^-$ be a word satisfying $u^-\stackrel{\mathrm{h}}{\longrightarrow}u$. 
The \emph{image of $f$ under left-Wieland drift with respect to $u^-$} is determined as follows:
\begin{enumerate}
 \item Insert a vertex $L'_i$ to the left of $L_i$ for $1\leq i\leq N$. Then run through the occurrences of ones in $u^-$: Let 
$\{i_1 < i_2 < \ldots < i_{N_1}\} = \{ i | u^-_i=1\}$.  
\begin{enumerate}
\item If $u_{i_j}$ is the $j$-th one in $u$, add a horizontal edge between $L'_{i_j}$ and $L_{i_j}$.
 \item If $u_{i_{j}-1}$ is the $j$-th one in $u$, add a vertical edge between $L'_{i_j}$ and $L_{{i_j}-1}$.
 \end{enumerate}
 \item Apply Wieland gyration to each odd cell of $f$.
 \item Delete all vertices in $\mathcal{R}^N$ and their incident edges.
 \end{enumerate}
After shifting the whole construction one unit to the right, one obtains the desired image $\WL_{u^-}(f)$. 
In the case $u^-=u$, simply write $\WL(f)$ and say the \emph{image of $f$ under left-Wieland drift}.
\end{Def}

\begin{figure}[tbh]
\begin{center}
\includegraphics[width=1\textwidth]{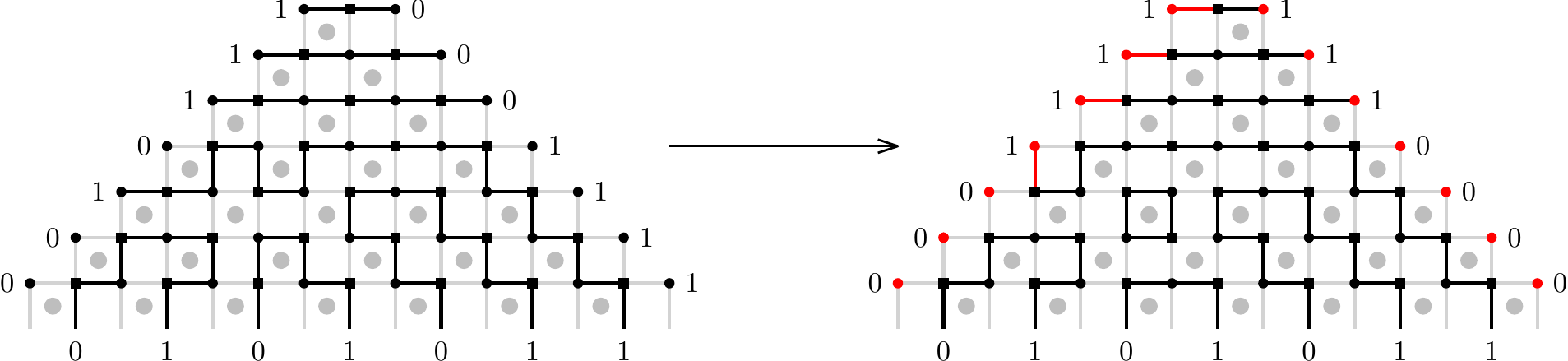}
\caption{A TFPL and its image under left-Wieland drift with respect to \hbox{$0001111$}.}
\label{Fig:WielandexampleTFPL}
\end{center}
\end{figure}

In Figure~\ref{Fig:WielandexampleTFPL}, an example for left-Wieland drift is given. The image of a TFPL with boundary $(u,v;w)$ under left-Wieland drift with respect to $u^-$ is again a TFPL and has boundary
$(u^-,v^+;w)$, where $v^+$ is a word satisfying $v\stackrel{\mathrm{v}}{\longrightarrow}v^+$, see \cite[Proposition 2.2]{WielandDrift}.\\

\emph{Right-Wieland drift} depends on a word $v^-$ satisfying $v^-\stackrel{\mathrm{v}}{\longrightarrow}v$ that encodes what happens along the right boundary of a TFPL with right boundary $v$ and is denoted by 
$\WR_{v^-}$ respectively $\WR$ if $v^-=v$. It is defined in an obvious way as the symmetric version of left-Wieland drift and it shall simply be illustrated with an example in Figure~\ref{InverseWielandExampleTFPL}. 

\begin{figure}[tbh]
\includegraphics[width=1\textwidth]{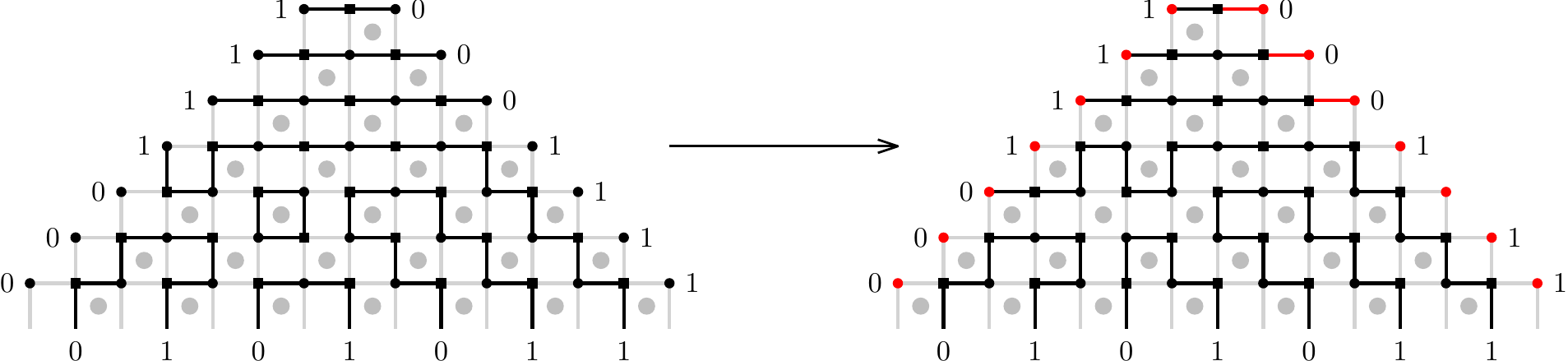}
\caption{A TFPL and its image under right-Wieland drift with respect to \hbox{$0001111$}.}
\label{InverseWielandExampleTFPL}
\end{figure}

The image of a TFPL with boundary $(u,v;w)$ under right-Wieland drift with respect to $v^-$ is a TFPL with boundary $(u^+,v^-;w)$ where $u^+$ is a word satisfying $u\stackrel{\mathrm{h}}{\longrightarrow}u^+$.\\

Given a TFPL with right boundary $v$ the effect of left-Wieland drift along the right boundary of the TFPL is inverted by right-Wieland drift with respect to $v$. On the other hand, given a TFPL with left 
boundary $u$ the effect of right-Wieland drift along the left boundary is inverted by left-Wieland drift with respect to $u$. Since Wieland gyration is an involution on each cell it follows:

\begin{Prop}[{\cite[Theorem 2]{WielandDrift}}]\label{Thm:WielandBijectiveTFPL} \begin{enumerate}
             \item Let $f$ be a TFPL with boundary $(u^+,v;w)$ and $u$ be a word such that $u\stackrel{\mathrm{h}}{\longrightarrow}u^+$. 
             Then 
             \[
             \WR_v(\WL_{u}(f))=f.
             \]
             \item Let $f$ be a TFPL with boundary $(u,v^+;w)$ and $v$ be a word such that 
             $v\stackrel{\mathrm{v}}{\longrightarrow}v^+$. Then 
             \begin{equation}
             \notag \WL_u(\WR_{v}(f))=f.
             \end{equation}
             
            \end{enumerate}

\end{Prop}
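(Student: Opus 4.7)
The plan is to show that $\WR_v \circ \WL_u$ acts as the identity on TFPLs by unpacking both operations into their local ingredients and exploiting the fact that Wieland gyration is an involution on each individual cell (evident from Figure~\ref{Wieland}). Part (2) will follow from part (1) by the symmetry of the definitions, so I concentrate on part (1).

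First I would unpack the two operations. By Definition~\ref{Def:LeftWieland}, $\WL_u(f)$ is obtained from $f$ by (a) attaching a column of vertices $L'_i$ to the left of $\mathcal{L}^N$ together with horizontal/vertical edges dictated by the positions of the ones in $u$, (b) applying $\Wie$ to every odd cell of the resulting enlarged configuration, (c) deleting the vertices of $\mathcal{R}^N$ and their incident edges, and (d) shifting everything one unit to the right. Right-Wieland drift $\WR_v$ is defined analogously, with left/right roles swapped and with even cells taking the role of odd cells.

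Next I would combine the two constructions on a common trapezoidal region. The rightward shift from $\WL_u$ followed by the leftward shift from $\WR_v$ positions the final configuration exactly where $f$ sat originally. The crucial parity observation is that the rightward unit shift flips the chessboard coloring of cells, so the cells that $\WR_v$ treats as \emph{even} in $\WL_u(f)$ are precisely the odd cells of the enlarged configuration on which $\Wie$ was applied in step~(b) of $\WL_u$. Because Wieland gyration is an involution on each active cell, the two rounds of $\Wie$ cancel on every interior cell, leaving the interior of $f$ unchanged.

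It remains to carry out the boundary bookkeeping. The deletion of $\mathcal{R}^N$ in step~(c) of $\WL_u$ must be undone by the analogous right-extension step of $\WR_v$ using the right boundary word $v$ of $f$; simultaneously, the column of vertices $L'_i$ introduced by step~(a) of $\WL_u$ must be removed by the symmetric left-deletion step of $\WR_v$, leaving the original $\mathcal{L}^N$ with left boundary word $u^+$ intact. I expect this boundary check to be the main obstacle: one has to verify, case by case on the ones of $u$ (respectively $v$), that the rule specifying whether $L'_{i_j}$ is attached to $L_{i_j}$ by a horizontal edge or to $L_{i_j-1}$ by a vertical edge is \emph{forced} by the subsequent Wieland gyration on the adjacent odd cell, and similarly on the right-hand side. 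Once this compatibility between the boundary-extension rules and the Wieland gyrations on boundary cells is established, the involutivity argument on interior cells completes the identity $\WR_v(\WL_u(f))=f$, and part~(2) follows by reflecting the argument across the vertical axis.
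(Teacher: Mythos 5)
Your proposal is correct and follows essentially the same route as the paper, which itself only sketches this argument (deferring the full details to the cited reference \cite{WielandDrift}): the involutivity of Wieland gyration on each active cell combined with the observation that the unit shift makes the even cells of $\WL_u(f)$ coincide with the gyrated odd cells of $f$, plus the check that the boundary-insertion step of $\WR_v$ undoes the deletion of $\mathcal{R}^N$ and vice versa. The boundary bookkeeping you flag as the main obstacle is exactly the point the paper also asserts without detailed verification.
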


By Proposition~\ref{Thm:WielandBijectiveTFPL} a TFPL is invariant under left-Wieland drift if and only if it is invariant under right-Wieland drift.
Hence, a TFPL is said to be \textit{stable} if it is invariant under left-Wieland drift, whereas otherwise it is said to be \textit{instable}.
The set of stable TFPLs with boundary $(u,v;w)$ is denoted by $S_{u,v}^w$ and its cardinality by $s_{u,v}^w$.
In \cite{WielandDrift} it is shown that stable TFPLs can be characterized as follows:

\begin{Prop}[{\cite[Theorem 4]{WielandDrift}}]\label{Thm:StableTFPL}
A TFPL is stable if and only if it contains no edge of the form $\vcenter{\hbox{\includegraphics[width=.011\textwidth]{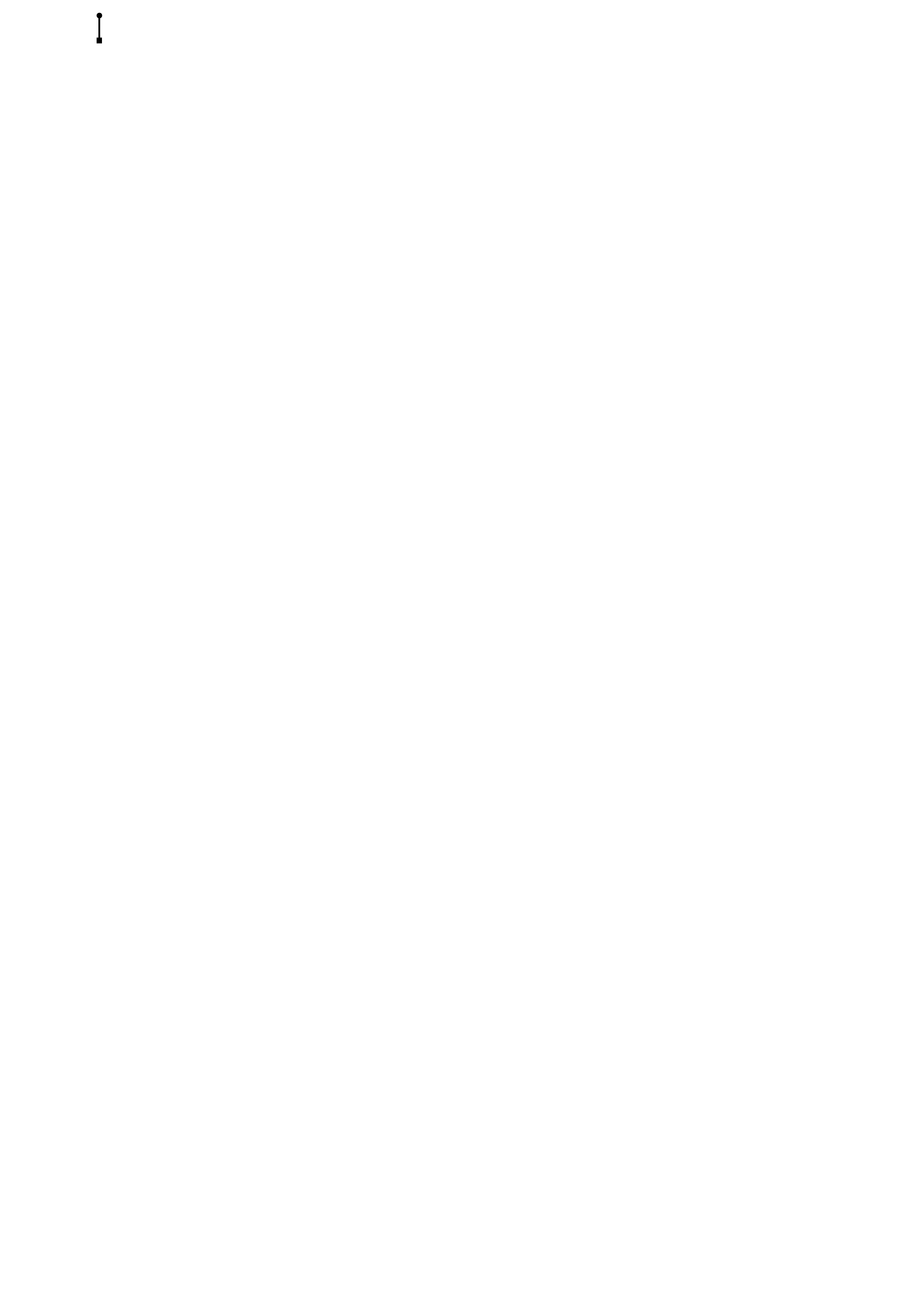}}}$. Such an edge is said to be a \textnormal{drifter}.
\end{Prop}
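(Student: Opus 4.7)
The plan is to prove the two implications by analyzing the action of $\WL$ locally, using the three-step recipe of Definition~\ref{Def:LeftWieland}: (1) inserting the left-boundary adjustment corresponding to $u^-=u$, (2) applying Wieland gyration $\Wie$ to every odd cell, and (3) deleting the right boundary and shifting the resulting picture one unit to the right. Stability means $\WL(f)=f$, and since the gyration in step~(2) and the shift in step~(3) are both local operations, invariance can be rephrased as a set of local conditions relating the configuration of each odd cell of $f$ with the configuration of the cell one unit to its right. The proof then reduces to determining which local configurations are compatible with this invariance, and showing that the forbidden ones are exactly those containing a drifter.

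For the direction ``no drifter $\Rightarrow$ stable'' I would carry out a finite case analysis on an odd cell $c$ of $f$, using the degree constraints in Definition~\ref{Def:TFPL} to list all admissible edge sets of $c$. For each admissible configuration I would compute the effect of Wieland gyration (using the rule illustrated in Figure~\ref{Wieland}) and then translate the cell one unit to the right; the claim is that the absence of a drifter in $f$ forces this translated image to coincide with the original configuration of the cell at the shifted position. In other words, among the finitely many possible cell patterns exactly those containing a drifter are the ones where gyration-plus-shift produces a discrepancy, and all the other patterns are locally invariant. The TFPL constraints on the row above and below $c$ then propagate this local invariance into the global equality $\WL(f)=f$.

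For the converse ``drifter $\Rightarrow$ instable'' I would take any drifter edge in $f$, identify the unique odd cell whose gyration governs it, and exhibit explicitly that after step~(2) the drifter disappears while no new edge is created one unit to the right that would compensate for it under step~(3). This produces a concrete position at which $f$ and $\WL(f)$ disagree. The main obstacle is the bookkeeping at the boundary: the inserted left-boundary edges in step~(1) (which depend on the condition $u^-\stackrel{\mathrm{h}}{\to}u$, here specialised to $u^-=u$) and the deleted right-boundary edges in step~(3) mean that the odd cells meeting $\mathcal{L}^N$ or $\mathcal{R}^N$ must be handled separately; I expect that the structure of the boundary insertion, together with Proposition~\ref{Thm:WielandBijectiveTFPL}, ensures that these boundary cells cannot host drifters consistently with stability, so that the bulk analysis suffices.
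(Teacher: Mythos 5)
First, a remark on the comparison: the paper does not prove this statement at all --- it is imported verbatim as \cite[Theorem~4]{WielandDrift} --- so the only in-paper material to measure your proposal against is the machinery built on that theorem, namely Lemma~\ref{Lemma:Effected_cells_exc_2}, Lemma~\ref{Lemma:Wieland_boundary_cells_exc_2} and Proposition~\ref{Prop:Moves_Exc2_unoriented}. Your overall strategy (reduce $\WL(f)=f$ to local conditions via the boxed relation $e'=\Wie(o)$ between an odd cell $o$ and the even cell $e$ to its right, then do a finite case analysis) is exactly the strategy those lemmas embody, and your forward direction ``no drifter $\Rightarrow$ stable'' is essentially Lemma~\ref{Lemma:Effected_cells_exc_2}. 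One refinement you should make explicit there: the invariance condition is $\Wie(o)=e$, a condition on the \emph{pair} $(o,e)$, and the no-drifter hypothesis has to be imposed on all six vertices of the domino $o\cup e$ (as in that lemma) before $e$ is pinned down by $o$; a case analysis on the single cell $o$ does not by itself determine what the cell at the shifted position looks like.

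The genuine gap is in the converse. You claim that for the odd cell governing a drifter, ``after step~(2) the drifter disappears.'' Wieland gyration fixes every cell containing precisely two edges on opposite sides, so gyration need not alter the cell containing the drifter at all, and your argument produces no discrepancy in that case. Moreover, even when gyration does act nontrivially, showing $\Wie(o)\neq o$ is not what is needed: because of the final one-unit shift, the cell of $\WL(f)$ sitting at a given even position is $\Wie$ of the odd cell to its \emph{left}, so whether the edge at the drifter's position survives in $\WL(f)$ is governed by a different gyration than the one you single out. The correct certificate of instability is an inequality $\Wie(o)\neq e$ for some adjacent pair, and establishing it requires either a further case analysis on the even cells surrounding the drifter (this is what Lemma~\ref{Lemma:Wieland_boundary_cells_exc_2} and the tables in Section~\ref{Sec:Wieland_drift_TFPL_Excess_2} do, under extra hypotheses) or a global argument --- for instance passing to the canonical orientation and the blue-red path tangle, where a drifter becomes a down step that is pushed strictly to the right by each application of $\WL$ (Section~\ref{Sec:Path_drifter_exc_2}), which is incompatible with $\WL(f)=f$. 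As written, your converse would fail precisely on the configurations where the drifter sits in a gyration-invariant cell, and the boundary cases you defer (``I expect that\dots'') are part of the same unfinished bookkeeping.
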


Note that by Proposition~\ref{Prop:Comb_Int_Exc} a TFPL of excess $k$ exhibits at most $k$ drifters.\\

Given a TFPL $f$ the sequence $(\WL^m(f))_{m\geq 0}$ is eventually periodic since there are only finitely many TFPLs of a fixed size. The length of its period is in fact always $1$.

\begin{Prop}[{\cite[Theorem 3]{WielandDrift}}]\label{Thm:EventuallyStable}
Let $f$ be a TFPL of size $N$. Then $\WL^{2N-1}(f)$ is stable, so that the following holds for all $m\geq 2N-1$:
\[
\WL^m(f)=\WL^{2N-1}(f).
\]
The same holds for right-Wieland drift.
\end{Prop}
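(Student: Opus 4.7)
The strategy is to combine the drifter characterization of stability with a quantitative analysis of how drifters propagate under iterated left-Wieland drift. By Proposition~\ref{Thm:StableTFPL}, stability of a TFPL is equivalent to the absence of drifters, so it suffices to prove that after at most $2N-1$ applications of $\WL$ no drifter remains in $f$. Once that is established, the second conclusion follows at once, since a stable TFPL is fixed by $\WL$.

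The plan is to track each drifter through successive applications of $\WL$ and to establish a propagation/annihilation lemma of the following form: if a TFPL $g$ contains a drifter $e$, then $\WL(g)$ contains at most one drifter directly descending from $e$, and this descendant lies exactly one unit to the left of $e$ in the coordinates obtained after the rightward shift in step~(3) of Definition~\ref{Def:LeftWieland}; if no such descendant exists, then $e$ has been absorbed into the left boundary. I would prove this by a case analysis on the local configurations around the odd active cell containing $e$ and its immediate neighbors, complemented by a check that $\WL$ does not introduce any new drifter in the interior. The cleanest setting for this case analysis is the blue-red path tangle model of Section~\ref{Subsec:TFPL_exc_2}: there Wieland drift has a purely local description in terms of crossings of blue and red diagonal steps, and a drifter translates into a simple local pattern of these paths, which allows one to read off the leftward motion directly.

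Given the propagation/annihilation lemma, the bound is geometric. In $G^N$ the $k$-th row from the top has $2k+1$ vertices, so every horizontal edge of $G^N$ is at horizontal distance at most $2N-1$ from $\mathcal{L}^N$. A drifter initially located at distance $d\le 2N-1$ from the left boundary is therefore annihilated after at most $d$ iterations of $\WL$, so after $2N-1$ iterations no drifter survives. By Proposition~\ref{Thm:StableTFPL}, $\WL^{2N-1}(f)$ is stable, hence fixed by $\WL$, which gives $\WL^m(f)=\WL^{2N-1}(f)$ for all $m\ge 2N-1$. The corresponding statement for right-Wieland drift follows by the symmetric definition of $\WR$, with drifters moving one unit to the right per iteration and being absorbed into $\mathcal{R}^N$; one may also deduce it from the left case via Proposition~\ref{Thm:WielandBijectiveTFPL}.

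The main obstacle is the propagation/annihilation lemma. Because $\WL$ couples Wieland gyration on \emph{all} odd cells with a nontrivial modification of the left boundary (step~(1) of Definition~\ref{Def:LeftWieland}) and an overall shift, one must verify both that each drifter moves uniformly by one unit to the left and that no spurious drifters are created, either in the interior or near $\mathcal{L}^N$. The boundary modification in step~(1) is tailored precisely so that the prescribed horizontal and vertical edges match the local pattern that would otherwise generate a drifter; making this precise is what underpins the sharp constant $2N-1$ rather than a weaker quadratic bound.
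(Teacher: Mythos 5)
This proposition is quoted from \cite{WielandDrift} and the present paper gives no proof of it, so the comparison has to be made against the machinery the paper does develop (Sections~\ref{Sec:WielandDrift}--\ref{Sec:Path_drifter_exc_2}), which is precisely the excess-$\leq 2$ instantiation of the program you sketch. Your reduction to Proposition~\ref{Thm:StableTFPL} (stable $\Leftrightarrow$ no drifters) is fine, but the mechanism you propose for eliminating drifters is wrong on two concrete points. First, the direction: under left-Wieland drift the configuration moves to the \emph{right} --- step~(1) of Definition~\ref{Def:LeftWieland} inserts new edges at $\mathcal{L}^N$, step~(3) deletes $\mathcal{R}^N$, and after the final shift the content of each odd cell $o$ lands in the even cell to its right ($e'=\Wie(o)$). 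Drifters therefore travel rightward and are destroyed upon reaching $\mathcal{R}^N$ (Proposition~\ref{Prop:Moves_Exc2_unoriented}(1)); they are not ``absorbed into the left boundary,'' and the tailored edges of step~(1) have nothing to do with their annihilation. This is not a harmless relabelling, because your explanation of \emph{why} a drifter disappears points at the wrong end of the triangle and at the wrong step of the definition.

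Second, and more seriously, your propagation/annihilation lemma is false as stated: a single application of $\WL$ can turn one drifter into two. This already happens at excess $2$ --- it is the move $M_4$ of Figure~\ref{Fig:Moves_Exc2_unoriented}, and Section~\ref{Sec:Path_drifter_exc_2} must fix a convention for which of the two offspring counts as ``the'' descendant precisely for this reason. Hence both ``at most one drifter directly descending from $e$'' and ``no spurious drifters are created'' fail, and with them the clean ``distance to the boundary drops by one each step'' bookkeeping; one also still has to rule out stalling. A correct argument must control creation and progress of drifters globally, e.g.\ via a monotone quantity; in this paper that role is played, for excess $\leq 2$ only, by the weakly increasing boundary words and the exact path-length identity $L(\mathfrak{d})+R(\mathfrak{d})=R_1(u^{R(\mathfrak{d})})+L_0(v^{L(\mathfrak{d})})+1$ of Corollary~\ref{Cor:Path_drifter_exc_2}, whose proof is itself a lengthy cell-by-cell case analysis. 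Since your sketch defers exactly this analysis (you name it as the main obstacle) and the lemma it is meant to establish is not true in the form you state, the proposal does not yet constitute a proof.
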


\begin{figure}[tbh]
\begin{center}
\includegraphics[width=.75\textwidth]{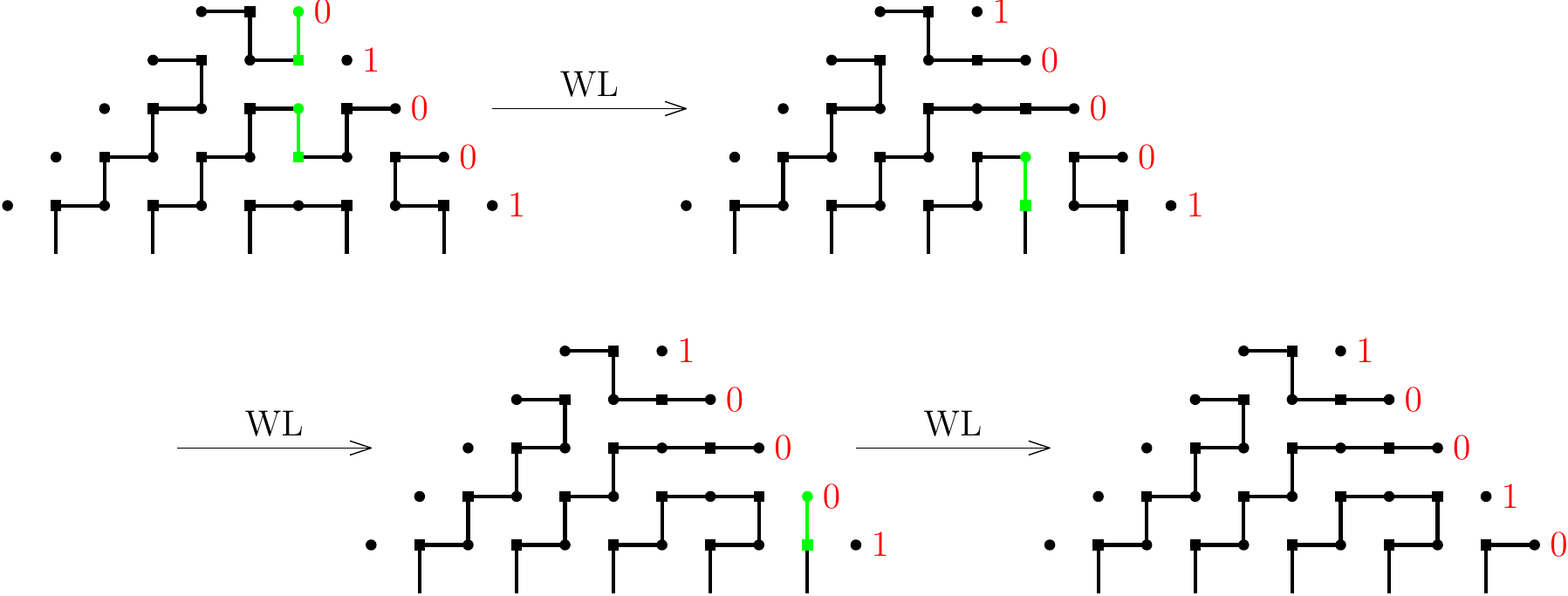}
\caption{A TFPL and its images under left-Wieland drift.}
\label{Fig:Eventually_Stable_Example_exc_2}
\end{center}
\end{figure}

In Figure~\ref{Fig:Eventually_Stable_Example_exc_2} an example of a TFPL and its images under left-Wieland drift is given. There a stable TFPL is obtained after the third iteration of 
left-Wieland drift. From now on, for an instable TFPL $f$ denote by $L=L(f)$ the positive integer $L$ such that 
$\operatorname{WL}^{\ell}(f)$ is instable for each $0\leq \ell\leq L$ and $\operatorname{WL}^{L+1}(f)$ is stable and by $R=R(f)$ the positive integer such that 
$\operatorname{WR}^{r}(f)$ is instable for each $0\leq r \leq R$ and $\operatorname{WR}^{R+1}(f)$ is stable.

\begin{Def}[$\operatorname{Path}(f)$, $\operatorname{Left}(f)$, $\operatorname{Right}(f)$]
Let $f$ be a TFPL.
The \textnormal{path} of $f$ -- denoted by Path($f$) -- is the sequence of all TFPLs that can be reached by an iterated application of left- respectively right-Wieland drift to $f$ that is 
\begin{equation}
\operatorname{Path}(f)=\left(\operatorname{WR}^{R+1}(f),\dots,\operatorname{WR}(f),f,\operatorname{WL}(f),\dots,\operatorname{WL}^{L+1}(f)\right). 
\notag\end{equation}
Furthermore, the stable TFPL $\operatorname{WR}^{R+1}(f)$ is denoted by $\operatorname{Right}(f)$ and the stable TFPL $\operatorname{WL}^{L+1}(f)$ by $\operatorname{Left}(f)$.
\end{Def}

When $v^{\ell}$ denotes the right boundary of $\operatorname{WL}^{\ell}(f)$ for each $0\leq \ell\leq L+1$ and $\lambda'$ denotes the conjugate of a Young diagram $\lambda$ then the sequence 
\begin{equation}
\lambda(v)'=\lambda(v^0)'\subseteq\lambda(v^1)'\subseteq\cdots\subseteq\lambda(v^L)'\subseteq\lambda(v^{L+1})' 
\label{Eq:Sequence_v}\end{equation}
gives rise to a semi-standard Young tableau of skew shape $\lambda(v^{L+1})/\lambda(v)'$ with entries $1,2,\dots,L+1$. On the other hand, when 
$u^{r}$ denotes the left boundary of $\operatorname{WR}^{r}(f)$ for each $0\leq r\leq R+1$ then the sequence 
\begin{equation}
\lambda(u)=\lambda(u^0)\subseteq\lambda(u^1)\subseteq\cdots\subseteq\lambda(u^R)\subseteq\lambda(u^{R+1}). 
\label{Eq:Sequence_u}\end{equation}
gives rise to a semi-standard Young tableau of skew shape $\lambda(u^{R+1})/\lambda(u)$. 

It will be shown that for an instable TFPL $f$ with boundary $(u,v;w)$ of excess at most $2$ \textbf{precisely one} of the following cases applies:
\begin{enumerate}
 \item the sequence in (\ref{Eq:Sequence_v}) corresponds to a semi-standard Young tableau in $G_{\lambda(v)',\lambda(v^+)'}$;
 \item the sequence in (\ref{Eq:Sequence_u}) corresponds to a semi-standard Young tableau in $G_{\lambda(u),\lambda(u^+)}$;
 \item neither the sequence in (\ref{Eq:Sequence_v}) corresponds to a semi-standard Young tableau in $G_{\lambda(v)',\lambda(v^+)'}$ 
 nor the sequence in (\ref{Eq:Sequence_u}) corresponds to a semi-standard Young tableau in $G_{\lambda(u),\lambda(u^+)}$.
\end{enumerate}

In the bijective proof of Theorem~\ref{Thm:Expressing_in_stable_TFPLs_exc_2} an instable TFPL $f$ with boundary $(u,v;w)$ of excess at most $2$ will be associated with the triple consisting of the empty semi-standard 
Young tableau of skew shape $\lambda(u)/\lambda(u)$, the stable TFPL $\operatorname{Left}(f)$ and the semi-standard Young tableau corresponding to the sequence 
in (\ref{Eq:Sequence_v}) if the latter is an element of $G_{\lambda(v)',\lambda(v^+)'}$.
If the semi-standard Young tableau corresponding to the sequence in (\ref{Eq:Sequence_u}) is an element of $G_{\lambda(u),\lambda(u^+)}$ 
then $f$ will be associated with the triple consisting of the semi-standard Young tableau in $G_{\lambda(u),\lambda(u^{R+1})}$ corresponding to the previous sequence, the stable TFPL $\operatorname{Right}(f)$ and the 
empty semi-standard Young tableau of skew shape $\lambda(v)'/\lambda(v)'$. Finally, if neither the sequence in (\ref{Eq:Sequence_v}) 
corresponds to a semi-standard Young tableau in $G_{\lambda(v)',\lambda(v^+)'}$ nor the sequence in (\ref{Eq:Sequence_u}) corresponds to a semi-standard Young tableau in $G_{\lambda(u),\lambda(u^+)}$ 
then to $f$ moves are applied which transform and ultimately turn it into a stable TFPL with boundary $(u^+,v^+;w)$ for a $u^+>u$ and a $v^+>v$. These moves will be extracted from the effect of Wieland drift on instable 
TFPLs of excess at most $2$. The triple which will be associated with $f$ then consists of this stable TFPL, a semi-standard Young tableau in $G_{\lambda(v)',\lambda(v^+)'}$ and one in $G_{\lambda(u),\lambda(u^+)}$.\\

In the next section, the effect of Wieland drift on instable TFPLs of excess at most $2$ is studied.

\section{An alternative description of Wieland drift for TFPLs of excess at most 2}\label{Sec:Wieland_drift_TFPL_Excess_2}

The main contribution of this section is a description of the effect of Wieland drift on TFPLs of excess at most $2$ as a composition of moves. 
In Figure~\ref{Fig:Moves_Exc2_unoriented}, the moves which form the basis for that description are depicted. Recall that a TFPL of excess $k$ contains at most $k$ drifters.

\begin{figure}[tbh]
\includegraphics[width=.7\textwidth]{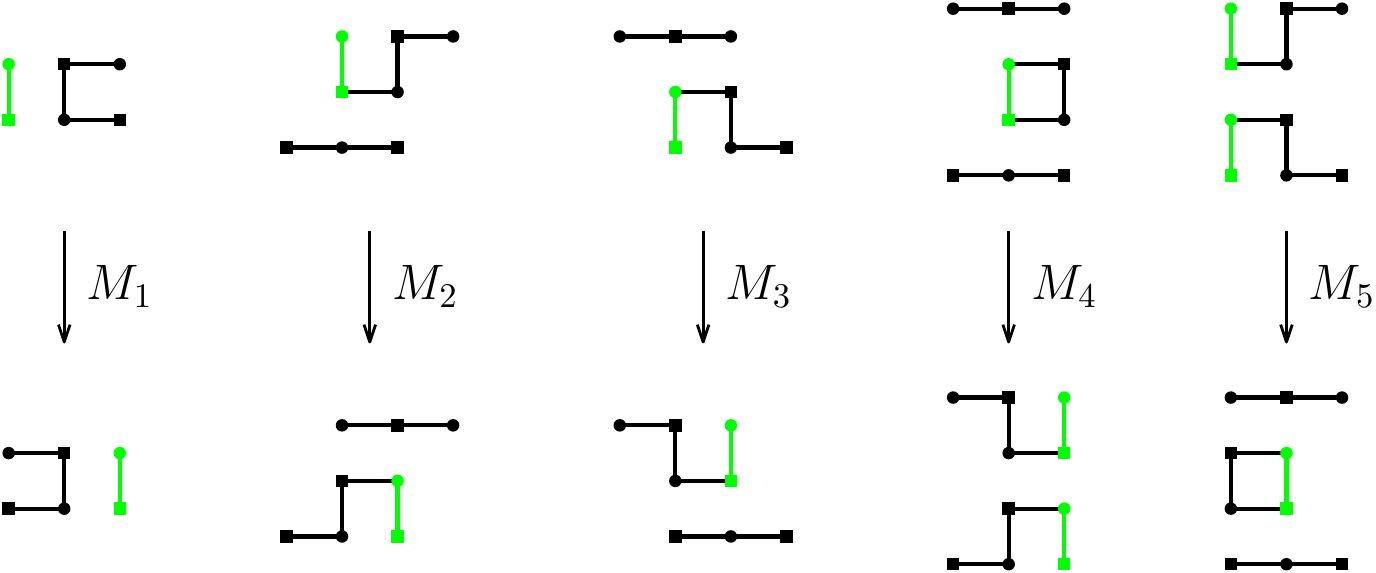}
\caption{The moves performed by left-Wieland drift in an instable TFPL of excess at most $2$.}
\label{Fig:Moves_Exc2_unoriented}
\end{figure}

\begin{Prop}\label{Prop:Moves_Exc2_unoriented} 
Let $f$ be an instable TFPL with boundary $(u,v;w)$ such that $\operatorname{exc}(u,v;w)\leq 2$. Furthermore, let $u^-$ be a word so that $u^-\stackrel{\mathrm{h}}{\longrightarrow}u$. Then the image of $f$ under left-Wieland drift with respect to $u^-$ is determined as follows:
\begin{enumerate}
\item if $R_i$ in $\mathcal{R}^N$ is incident to a drifter delete that drifter and add a horizontal edge incident to $R_{i+1}$ for $i=1,2,\dots,N-1$; denote the so-obtained TFPL by $f'$;
\item consider the columns of vertices of $G^N$ that contain a vertex, which is incident to a drifter in $f'$: let $\mathcal{I}=\{\ 2\leq i\leq 2N: \,\, a \,\, vertex\,\, of \,\, the \,\, i-th\,\, column\,\ is\,\, incident\,\, to \,\, a \,\, drifter\,\, in\,\, f'\}$, where the columns of $G^N$ 
are counted from left to right. 
\begin{enumerate}
\item If $\mathcal{I}=\{i<j\}$ apply a move in $\{M_1,M_2,M_3\}$ to the drifter incident to vertices of the $j$-th column and thereafter apply a move in $\{M_1,M_2,M_3\}$ to the drifter incident to vertices of the $i$-th column;
\item If $\mathcal{I}=\{i\}$ perform a move in $\{M_4,M_5\}$ or if this is not possible apply a move in $\{M_1,M_2,M_3\}$ to each of the drifters in $f'$ in the following order (if there are two drifters in $f'$): 
if the odd cell that contains the upper drifter is not of the form $\mathfrak{o}_9$ (see Figure~\ref{Fig:Cells_TFPL_Exc_2}) move the upper drifter first. Otherwise, move the lower drifter first.
\end{enumerate}
\item run through the occurrences of one in $u^-$: let $\{i_1<i_2<\cdots<i_{N_1}\}=\{i: u_i^-=1\}$. If $u_{i_j-1}$ is the $j$-th one in $u$ delete the horizontal edge incident to $L_{i_j-1}$ and add a vertical edge incident to $L_{i_j}$ for $j=1,2,\dots, N_1$.
\end{enumerate}
\end{Prop}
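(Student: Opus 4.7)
The plan is to reduce the global action of left-Wieland drift to a strictly local analysis of Wieland gyration in the neighbourhood of the (at most two) drifters of $f$, and separately to verify that steps~(1) and~(3) correctly account for the boundary modifications prescribed in Definition~\ref{Def:LeftWieland}.

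First I would match the boundary steps against the definition. Step~(3) is essentially a transcription of the left-boundary construction in Definition~\ref{Def:LeftWieland}: for each $j$ with $u^-_{i_j-1}$ equal to the $j$-th one in $u$, the added vertex $L'_{i_j}$ is joined to $L_{i_j-1}$ by a vertical edge which, after the rightward shift, becomes the new vertical edge at $L_{i_j}$ and forces deletion of the horizontal edge that originally sat at $L_{i_j-1}$. Step~(1), dually, encodes what happens at the right boundary: applying Wieland gyration to the external odd cell immediately to the right of $R_i$ turns the outgoing drifter into a horizontal edge incident to $R_{i+1}$ that survives the subsequent deletion of $\mathcal{R}^N$ and the rightward shift. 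With the boundary steps accounted for, the claim reduces to understanding the effect of gyration on all interior odd cells.

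Next I would invoke Proposition~\ref{Thm:StableTFPL}: a TFPL is stable precisely when it contains no drifter, and on any odd cell whose six surrounding edges are not incident to a drifter the gyration image coincides with the shifted input after the overall right-shift of the picture. Consequently every non-trivial change under $\operatorname{WL}_{u^-}$ is concentrated in the columns indexed by $\mathcal{I}$, and the proof reduces to a finite case check on the odd cells surrounding each drifter. Enumerating the odd-cell environments $\mathfrak{o}_1,\dots,\mathfrak{o}_9$ that can host a single drifter and performing Wieland gyration on each produces exactly the moves $M_1$, $M_2$, $M_3$. Since by Proposition~\ref{Prop:Comb_Int_Exc} an instable TFPL of excess at most $2$ exhibits at most two drifters, this analysis, together with a second finite case check for joint two-drifter configurations that yield $M_4$ or $M_5$, exhausts all possibilities.

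The delicate part, and what I expect to be the main obstacle, is case~(2b) when $\mathcal{I}=\{i\}$: both drifters then share a column, so applying a move to one can alter the local environment of the other. The moves $M_4$ and $M_5$ are designed to absorb precisely those joint configurations in which the combined action of gyration on the two nested cells cannot be factored as two independent single-drifter moves. When no such joint move applies, I would verify that one of $M_1$, $M_2$, $M_3$ applies to each drifter in sequence, provided the ordering rule is obeyed: moving the upper drifter first leaves a legal single-drifter environment around the lower drifter in every case \emph{except} when the upper drifter sits inside a cell of shape $\mathfrak{o}_9$, where moving the lower drifter first is forced. Checking this reduces once more to a finite enumeration of the nine possible shapes for the upper cell, and the exceptional role of $\mathfrak{o}_9$ should emerge from a single explicit picture showing that processing it first would destroy the drifter below. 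In case~(2a) the two drifters lie in distinct columns, so their gyration neighbourhoods are disjoint; processing the rightmost first avoids any interference from the horizontal extension produced when the leftmost drifter is moved, and the two moves then commute with the overall shift.
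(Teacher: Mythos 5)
Your overall strategy --- localize the action of $\WL_{u^-}$ to the neighbourhoods of the at most two drifters and then perform a finite case check on the surrounding cells --- is indeed the paper's strategy, and your reading of the boundary steps (1) and (3) and of the ordering rule in (2b) is essentially right. But there is a genuine gap at the heart of the two-drifter analysis: you claim that in case (2a), because the two drifters lie in distinct columns, ``their gyration neighbourhoods are disjoint'' so the two moves can be performed independently. This is false. Two drifters at different $x$-coordinates can still be close enough that one of them is an edge of, or is incident to a vertex of, one of the cells $o_r,e_r,o_b,e_b$ surrounding the other, and the same kind of interference can occur between a drifter sitting on $\mathcal{R}^N$ (handled by your step (1)) and an interior drifter. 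The bulk of the paper's proof --- the sub-cases ``$\mathfrak{d}^\ast$ contained in $e_r$'', ``$\mathfrak{d}^\ast$ contained in $e_b$'', ``$\mathfrak{d}^\ast$ contained in $o_b$'' together with the accompanying tables, and the overlap analysis in the boundary-drifter case --- is devoted precisely to these configurations with $\mathcal{I}=\{i<j\}$, verifying that the simultaneous gyration of all odd cells still factors as ``move the drifter in column $j$ first, then the one in column $i$''. Your proposal asserts exactly the statement that needs to be checked; indeed, if the neighbourhoods really were disjoint, no ordering prescription would be needed in (2a) at all.

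Two smaller but real inaccuracies. First, the localization principle you need --- that an even cell none of whose vertices is incident to a drifter is unchanged even when drifters exist elsewhere in the TFPL --- is not a consequence of Proposition~\ref{Thm:StableTFPL}, which is the global statement that a TFPL is stable if and only if it contains no drifter; the local statement is Lemma~\ref{Lemma:Effected_cells_exc_2}, which itself rests on enumerating the drifter-free pairs $(o,e)$ and checking $\Wie(o)=e$ for each, and the list of odd cells that can actually host a drifter is only finite in the required way after the exclusions of Lemma~\ref{Lemma:Excluded_cells_exc_2}. Second, your description of $M_4$ as absorbing a ``joint two-drifter configuration'' is wrong: $M_4$ is applied to a \emph{single} drifter sitting in a cell of type $\mathfrak{o}_{11}$ and produces two drifters, whereas $M_5$ merges two drifters in the same column into one. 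Getting this right matters, because the content of case (2b) is precisely the decision of when a configuration must be handled by one of these joint moves rather than by two sequential applications of $M_1$, $M_2$, $M_3$ in the prescribed order.
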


In Figure~\ref{Fig:Example_Wieland_Exc_2} a TFPL of excess $2$ with two drifters and its image under left-Wieland drift are depicted. The two drifters in the original TFPL have the same $x$-coordinate and the odd 
cell that contains the upper drifter is of the form $\mathfrak{o}_9$. 
Now, by left-Wieland drift the move $M_1$ is applied to the lower drifter before the move $M_2$ is applied to the other drifter. The rest of the TFPL is preserved.

\begin{figure}[tbh]
\centering
\includegraphics[width=.6\textwidth]{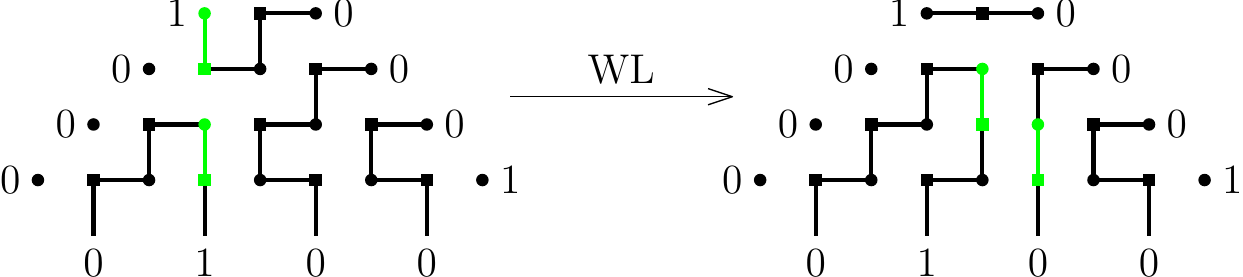}
\caption{A TFPL of excess $2$ with two drifters and its image under left-Wieland drift.}
\label{Fig:Example_Wieland_Exc_2}
\end{figure}

In the proof of Proposition~\ref{Prop:Moves_Exc2_unoriented} the effect of left-Wieland drift will be checked cell by cell. From the set of cells that can occur in a TFPL of excess at most $2$ the following cells can be 
excluded:

\begin{Lemma}\label{Lemma:Excluded_cells_exc_2}
In a TFPL of excess at most $2$, none of the following cells can occur:
\begin{center}
\includegraphics[width=.425\textwidth]{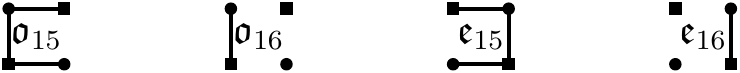}
\end{center}
\end{Lemma}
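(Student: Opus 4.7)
The plan is to invoke Proposition~\ref{Prop:Comb_Int_Exc} after endowing the given TFPL with its canonical orientation. Since the canonical orientation turns any TFPL into an oriented TFPL with the same boundary $(u,v;w)$, equation~(\ref{Excess}) expresses $\operatorname{exc}(u,v;w)$ as the sum of the occurrences of the eight distinguished local oriented configurations (drifters among them). Consequently, in order to rule out a given unoriented cell pattern it suffices to show that every canonical orientation of that pattern forces at least three of these eight configurations to appear either inside the cell or along its immediate boundary, since this would give $\operatorname{exc}(u,v;w)\geq 3$.

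The first step I would carry out is to enumerate, for each unoriented cell in the picture, the admissible canonical orientations of its edges. The constraints are of three kinds: the parity of each vertex in the cell (odd vs.\ even) dictates which corners can be sources or sinks; the degree-$2$ condition at interior vertices combined with the directed requirements of Definition~\ref{Def:Oriented_TFPL} forces an incoming and an outgoing edge at every internal vertex; and the canonical-orientation rules (closed paths oriented clockwise and, along the bottom, a path between $B_i$ and $B_j$ directed from the smaller to the larger index) further cut down the possibilities. For each remaining admissible orientation I would then read off three instances of configurations from the right-hand side of~(\ref{Excess}); in the typical forbidden cell this will amount to spotting three drifters, or a drifter together with two of the corner patterns appearing in~(\ref{Excess}).

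The main obstacle will be the bookkeeping of the case analysis: several of the pictured cells will admit more than one canonical orientation, and in some of them the third excess-contributing configuration will live on an edge shared with a neighbouring cell rather than strictly inside the cell itself. Handling this boundary behaviour cleanly, for instance by enlarging each pictured pattern by a layer of half-edges before counting the local configurations, is the delicate part. If the cell-by-cell enumeration becomes unwieldy, a backup plan is to translate the forbidden patterns into the blue-red path tangle picture of Subsection~\ref{Subsec:TFPL_exc_2} and count the horizontal steps they impose on the blue and red paths, since the horizontal steps are precisely the features encoded by the right-hand side of~(\ref{Excess}).
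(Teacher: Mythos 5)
Your proposal is correct and follows essentially the same route as the paper: endow the TFPL with its canonical orientation, invoke Proposition~\ref{Prop:Comb_Int_Exc}, and show that each forbidden cell forces at least three of the eight excess-counting configurations, with the third one possibly sitting on an edge adjacent to (rather than inside) the cell --- exactly the subtlety the paper handles for $\mathfrak{o}_{15}$ (the right endpoint of the right-to-left horizontal edge must either be adjacent to the vertex to its right or carry a drifter) and for $\mathfrak{o}_{16}$ (whose top and bottom rightmost vertices must each carry a drifter, giving three drifters in total). The only difference is that the paper dispatches the four cells with short ad hoc observations rather than a full enumeration of admissible orientations, but the underlying argument is the same.
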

 
Since the proofs in this section work by studying the cells of a TFPL it is convenient to fix notations for all the odd and even cells that can occur in a 
TFPL. In total, there are 16 different odd and 16 different even \textit{internal} cells -- that are cells which are not external -- that can occur in a TFPL. 
By Lemma~\ref{Lemma:Excluded_cells_exc_2} fourteen of those odd and fourteen of those even internal cells that can occur in a TFPL of excess at most $2$. 
The odd respectively even cells that can occur in a TFPL of excess at most $2$ will be numbered by $1$ up to $14$ and are listed in Figure~\ref{Fig:Cells_TFPL_Exc_2}, whereas the two excluded odd respectively 
even internal cells will be numbered by $15$ and $16$ as indicated in Lemma~\ref{Lemma:Excluded_cells_exc_2}.

\begin{figure}[tbh]
\includegraphics[width=.75\textwidth]{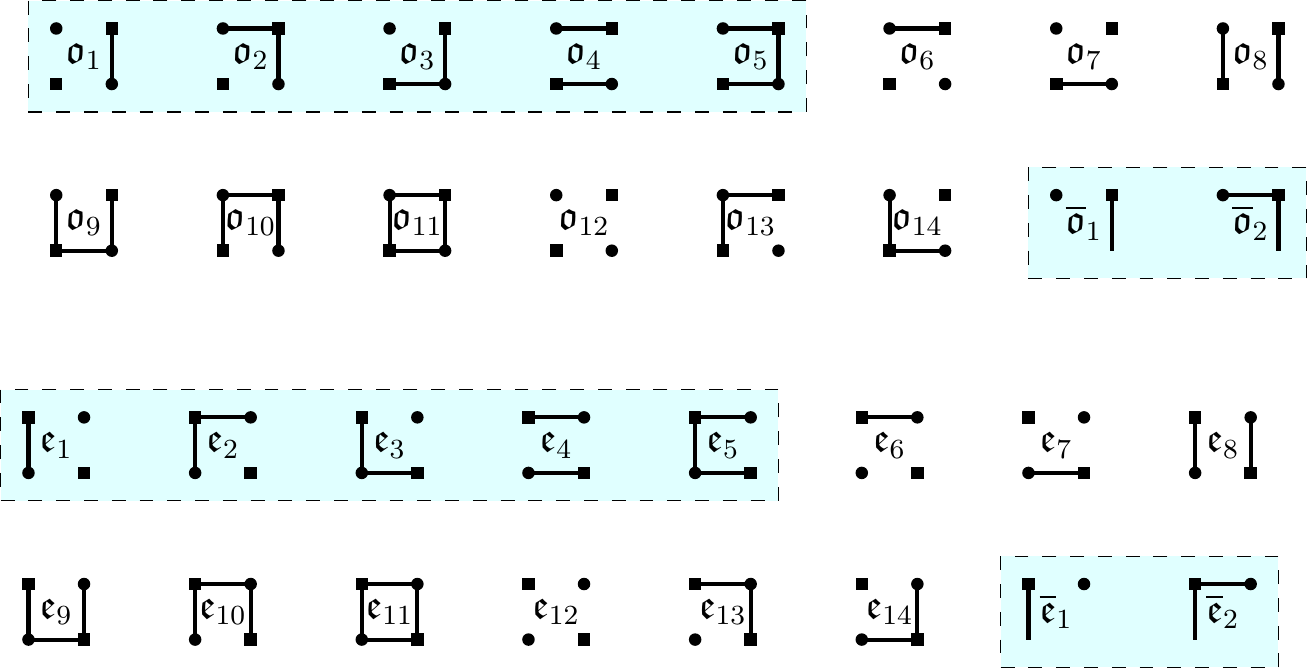}
\caption{The cells of a TFPL of excess at most $2$ where the sets \hbox{$\mathfrak{O}=\{\mathfrak{o}_1,\mathfrak{o}_2,\mathfrak{o}_3,\mathfrak{o}_4,\mathfrak{o}_5,\overline{\mathfrak{o}}_1,\overline{\mathfrak{o}}_2\}$} 
and $\mathfrak{E}=\{\mathfrak{e}_1,\mathfrak{e}_2,\mathfrak{e}_3,\mathfrak{e}_4,\mathfrak{e}_5,\overline{\mathfrak{e}}_1,\overline{\mathfrak{e}}_2\}$ are indicated.}
\label{Fig:Cells_TFPL_Exc_2}
\end{figure}

\begin{proof}
First, let $f$ be a TFPL that contains a cell $c$ that coincides with $\mathfrak{o}_{15}$. In $f$ together with its canonical orientation the oriented edges of $c$ then give rise to two configurations that 
are counted by the excess, see Proposition~\ref{Prop:Comb_Int_Exc}. Additionally, the right vertex of the horizontal edge of $c$ which is oriented from right to left either is adjacent to the vertex to its right or is 
incident to a drifter. Thus, the TFPL $f$ together with its canonical orientation contains at least three configurations that are counted by the excess. 
For the same reasons, a TFPL of excess at most $2$ cannot contain the third cell in the list. 

Now, let $f$ be a TFPL that contains a cell $c$ that coincides with $\mathfrak{o}_{16}$. Then both the top and the bottom rightmost vertex of $c$ have to be incident to a drifter. 
Therefore, $f$ contains at least three drifters and therefore has to be of excess at least $3$. 
By the same argument, the fourth cell in the list cannot occur in a TFPL of excess at most $2$.
\end{proof}

In the following, to distinguish between the cells of a TFPL and the cells of its image under left-Wieland drift given a cell $c$ of $G^N$ it is written $c$ when it is referred to the cell $c$ of the TFPL and 
$c'$ when it is referred to the cell $c$ of the image of the TFPL under left-Wieland drift. 
When the cells of a TFPL and of its image under left-Wieland drift are compared it has to be kept in mind that in the last step of left-Wieland drift the whole configuration is shifted one unit to the right. 
For that reason, for each odd cell $o$ of a TFPL and the even cell $e$ to the right of $o$ the following holds
when \textbf{disregarding} the distinction between odd and even vertices: 

\begin{center}
\fbox{$e'=\Wie(o)$}
\end{center}
\vspace{.3cm}

The odd cells $\mathfrak{O}=\{\mathfrak{o}_1,\mathfrak{o}_2,\mathfrak{o}_3,\mathfrak{o}_4,\mathfrak{o}_5,\overline{\mathfrak{o}}_1,\overline{\mathfrak{o}}_2\}$
and the even cells $\mathfrak{E}=\{\mathfrak{e}_1,\mathfrak{e}_2,\mathfrak{e}_3,\mathfrak{e}_4,\mathfrak{e}_5,\overline{\mathfrak{e}}_1,\overline{\mathfrak{e}}_2\}$ play a special role in the context of Wieland drift.

\begin{Lemma}[\cite{WielandDrift}]\label{Lemma:Effected_cells_exc_2}
Let $f$ be a TFPL, $o$ an odd cell of $f$ and $e$ the even cell to the right of $o$. If no vertex of $o$ and $e$ is incident to a drifter, 
then 
\begin{equation}
(o,e)\in\{(\mathfrak{o}_1,\mathfrak{e}_5), (\mathfrak{o}_2, \mathfrak{e}_3), (\mathfrak{o}_3,\mathfrak{e}_2), (\mathfrak{o}_4,\mathfrak{e}_4), (\mathfrak{o}_5,\mathfrak{e}_1), 
(\overline{\mathfrak{o}}_1,\overline{\mathfrak{e}}_2), (\overline{\mathfrak{o}}_2,\overline{\mathfrak{e}}_1)\}.
\notag\end{equation}
In particular, $e'=e$ in that case.
\end{Lemma}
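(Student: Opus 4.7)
The plan is a direct case analysis on the odd cell $o$. First, I would enumerate the odd internal cells allowed in a TFPL (taking into account Lemma~\ref{Lemma:Excluded_cells_exc_2}) and determine which are compatible with the hypothesis that no vertex of $o$ is incident to a drifter. Since a drifter has a specific local form, the four vertices of $o$ impose constraints on the configurations at their other incident edges, and a finite check from Figure~\ref{Fig:Cells_TFPL_Exc_2} should show that the only odd cells surviving are exactly the seven in $\mathfrak{O}$: the cells $\mathfrak{o}_1,\dots,\mathfrak{o}_5$ (the Wieland-invariant cells with two edges on opposite sides) and the two cells $\overline{\mathfrak{o}}_1,\overline{\mathfrak{o}}_2$.

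Next, given $o \in \mathfrak{O}$, the even cell $e$ to the right is constrained as follows: its left vertical edge coincides with $o$'s right vertical edge, and its two left vertices are the two right vertices of $o$. The degree-$2$ condition at these shared vertices, together with the hypothesis that no vertex of $e$ is incident to a drifter, forces the three remaining edges of $e$ (top, bottom, and right vertical). A direct inspection of Figure~\ref{Fig:Cells_TFPL_Exc_2} then shows that $e$ must equal the specific even cell paired with $o$ in the lemma statement.

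For the last claim $e' = e$, I would invoke the identity $e' = \Wie(o)$ displayed just before the lemma. For the five pairs $(\mathfrak{o}_i, \mathfrak{e}_{\sigma(i)})$, the cell $\mathfrak{o}_i$ has two edges on opposite sides, so $\Wie(\mathfrak{o}_i) = \mathfrak{o}_i$ by definition of Wieland gyration, and reading this invariant cell with the reversed vertex-parity convention recovers $\mathfrak{e}_{\sigma(i)}$. For the two pairs $(\overline{\mathfrak{o}}_j,\overline{\mathfrak{e}}_{\tau(j)})$, Wieland gyration instead exchanges edges and non-edges, and a similar direct check confirms that this produces the paired even cell.

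The main difficulty lies in the first step: carefully verifying that every odd cell outside $\mathfrak{O}$ necessarily forces a drifter at one of its neighboring vertices. This requires using the precise local form of a drifter and keeping track of the edges incident to $o$ from outside of $o$, since a drifter at the top or bottom vertex of $o$ need not appear among the four edges of $o$ itself. Once this classification is pinned down, the remaining steps reduce to routine bookkeeping on cell configurations.
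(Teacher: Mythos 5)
First, a remark on scope: the paper does not prove Lemma~\ref{Lemma:Effected_cells_exc_2} at all --- it is imported verbatim from \cite{WielandDrift} --- so there is no in-paper argument to compare yours against. Your overall plan (a finite case analysis over the possible odd cells, elimination of those whose vertices necessarily meet a drifter, determination of the adjacent even cell from the shared edge, the degree conditions and the parity of horizontal edges, and finally the identity $e'=\Wie(o)$) is the right kind of argument and is surely what the cited source carries out.

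There is, however, a concrete error in your justification of the final claim $e'=e$. You describe $\mathfrak{o}_1,\dots,\mathfrak{o}_5$ as ``the Wieland-invariant cells with two edges on opposite sides'' and conclude $\Wie(\mathfrak{o}_i)=\mathfrak{o}_i$. Among the $16$ possible edge-subsets of a cell, only two (top--bottom and left--right) consist of precisely two edges on opposite sides, so at most two of these five cells can be invariant under gyration; and the pairing in the statement, which sends $\mathfrak{o}_1\mapsto\mathfrak{e}_5$, $\mathfrak{o}_2\mapsto\mathfrak{e}_3$, $\mathfrak{o}_3\mapsto\mathfrak{e}_2$, $\mathfrak{o}_5\mapsto\mathfrak{e}_1$ and fixes only the index $4$, indicates that gyration \emph{complements} four of the five internal cells rather than fixing them. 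For those cells the correct verification of $e'=e$ is that the complement of $o$, reread with the opposite vertex parity, coincides with the even cell $e$ that is forced to its right --- not that $o$ is fixed by $\Wie$. As written, your argument for the last sentence of the lemma therefore rests on a false premise in most of the seven cases; it is repairable by an honest check against Figure~\ref{Fig:Cells_TFPL_Exc_2}, since you already apply the complementation mechanism correctly to $\overline{\mathfrak{o}}_1,\overline{\mathfrak{o}}_2$, but the stated reason is wrong. A smaller point: you invoke Lemma~\ref{Lemma:Excluded_cells_exc_2} to prune the list of odd cells, but that lemma presupposes excess at most $2$, whereas Lemma~\ref{Lemma:Effected_cells_exc_2} is stated for an arbitrary TFPL; the two cells $\mathfrak{o}_{15},\mathfrak{o}_{16}$ must instead be ruled out directly by the no-drifter hypothesis (which does the job, since each of them forces a drifter at one of its vertices).
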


To study the effect of left-Wieland drift on the whole TFPL it suffices to study its effect on the even cells of a TFPL. That is because edges 
of a TFPL that are not edges 
of an even cell have to be incident to a vertex in $\mathcal{L}^N$ and the effect of left-Wieland drift on these edges immediately follows from the definition of left-Wieland drift. 
To be more precise, in the image of a TFPL under left-Wieland drift all edges incident to a vertex in $\mathcal{L}^{N}$ have to be horizontal edges. 
By Lemma~\ref{Lemma:Effected_cells_exc_2}, to determine the effect of left-Wieland drift on a TFPL it suffices to determine its effect on the one hand on all even cells of the TFPL whereof a vertex is incident to a 
drifter and on the other hand on all even cells where the odd cells to their left contain a drifter.\\

Now, given a drifter $\mathfrak{d}$ in an instable TFPL there are at most three even cells whereof a vertex is incident to $\mathfrak{d}$ and there is at most one even 
cell such that the odd cell to its left contains $\mathfrak{d}$. In Figure~\ref{Fig:Surrounding_Cells_Exc_2}, these four even cells together with the odd cells to their left are depicted. 
Note that all four such even cells exist if and only if $\mathfrak{d}$ is not incident to a vertex in $\mathcal{L}^N\cup\mathcal{R}^N$.
From now on, these even cells and the odd cells to their left are denoted as indicated in Figure~\ref{Fig:Surrounding_Cells_Exc_2}. 

\begin{figure}[tbh]
\centering
\includegraphics[width=.14\textwidth]{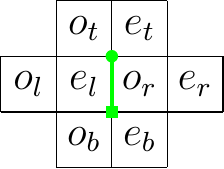} 
\caption{The even cells whereof a vertex is incident to a fixed drifter and the even cell such that the odd cell to its left contains the fixed drifter.}
\label{Fig:Surrounding_Cells_Exc_2}
\end{figure} 

Given a drifter in a TFPL of excess at most $2$ the effect of left-Wieland drift on the cell $e_t$, $e_l$ respectively $e_b$ can be uniformly described as long as $o_t$, $o_l$ respectively $o_b$ does not contain 
a drifter and is not in $\{\mathfrak{o}_6,\mathfrak{o}_7,\mathfrak{o}_{12}\}$:

\begin{Lemma}\label{Lemma:Wieland_boundary_cells_exc_2}
Let $f$ be an instable TFPL of excess at most $2$, $o$ an odd cell of $f$ and $e$ the even cell to the right of $o$. If $o$ does not contain a drifter, a vertex of $e$ is incident to a 
drifter and $o\notin \{\mathfrak{o}_6$, $\mathfrak{o}_7$, $\mathfrak{o}_{12}\}$, then $e'$ and $e$ coincide with the following sole exceptions:
\begin{enumerate}
 \item if in $e$ there is a drifter, then in $e'$ there is none,
 \item if the top left vertex of $e$ is incident to a drifter, then there is no horizontal edge between the two top vertices of $e$ but there is one between the two top vertices of $e'$,
 \item if the bottom left vertex of $e$ is incident to a drifter, then there is no horizontal edge between the two bottom vertices of $e$ but there is one between the two bottom vertices of $e'$.
\end{enumerate}
\end{Lemma}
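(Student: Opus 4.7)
The plan is built around the identity highlighted just before the statement: for any odd cell $o$ of $f$ with $e$ the even cell immediately to its right, the corresponding even cell $e'$ of $\WL(f)$ equals $\Wie(o)$ once the one-unit right shift in Step~(3) of Definition~\ref{Def:LeftWieland} is accounted for. Consequently the entire lemma reduces to a local verification: under the given constraints on $o$ and $e$, one has to check that the cell $\Wie(o)$ agrees with $e$ except for the three prescribed modifications.

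First I would determine which odd cells $o$ are admissible. Since $o$ contains no drifter, $o$ is not one of the odd cells in Figure~\ref{Fig:Cells_TFPL_Exc_2} whose interior edges include a drifter; combined with the exclusion $o\notin\{\mathfrak{o}_6,\mathfrak{o}_7,\mathfrak{o}_{12}\}$ and Lemma~\ref{Lemma:Excluded_cells_exc_2}, this leaves only a handful of possibilities, essentially the drifter-free cells of $\mathfrak{O}$ together with a small number of cells whose right edge accommodates an incident drifter at one of its vertices. For each such $o$ I would apply Wieland gyration using the rule of Figure~\ref{Wieland}: $\Wie$ leaves a cell fixed if it has exactly two opposite edges, and otherwise exchanges edges with non-edges.

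Next I would case analyze by the position of the drifter incident to $e$. Because $o$ itself contains no drifter, the only way a drifter can be shared between $o$ and $e$ is by being incident to the top-right or bottom-right vertex of $o$ (equivalently the top-left or bottom-left vertex of $e$) and hanging to the right into $e$. The remaining possibilities are that the drifter lies in the interior of $e$, or is incident to a right vertex of $e$ and thus does not interact with $o$ at all. For each admissible pair (cell $o$, drifter location at $e$) I would read off the edges of $\Wie(o)$ and match them against the edges of $e$, confirming that the discrepancies are precisely items~(1)--(3): a drifter interior to $e$ disappears in $e'$, and a horizontal top (respectively bottom) edge appears in $e'$ exactly when the top-left (respectively bottom-left) vertex of $e$ is incident to a drifter.

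The main obstacle will be the bookkeeping of the case analysis: each admissible odd cell $o$ pairs with several possible configurations of $e$, parameterized by which of the three drifter positions occurs and by which other edges of $e$ are present, and one has to check that the resulting configuration actually arises from a genuine TFPL of excess at most~$2$ (so that Lemma~\ref{Lemma:Excluded_cells_exc_2} applies to the neighbouring cells as well). The role of the exclusion $o\notin\{\mathfrak{o}_6,\mathfrak{o}_7,\mathfrak{o}_{12}\}$ will be central here: for those three odd cells the application of $\Wie$ alters further edges of $e$ beyond the three listed modifications, so the lemma genuinely fails for them and they must be treated separately as part of the broader analysis feeding into Proposition~\ref{Prop:Moves_Exc2_unoriented}.
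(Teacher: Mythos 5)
Your plan follows the paper's proof essentially verbatim: the paper likewise invokes the boxed identity $e'=\Wie(o)$, lists the finitely many admissible pairs $(o,e)$ compatible with the hypotheses (handling the external pair $o=\overline{\mathfrak{o}}_1$, $e=\overline{\mathfrak{e}}_1$ first, then the internal ones with $o\in\{\mathfrak{o}_1,\dots,\mathfrak{o}_5\}$), computes $e'=\Wie(o)$ in each case, and checks the three asserted discrepancies directly. Your reading of the hypothesis $o\notin\{\mathfrak{o}_6,\mathfrak{o}_7,\mathfrak{o}_{12}\}$ is also the right one — for those cells $\Wie$ produces changes in $e'$ beyond items (1)--(3) (e.g.\ $\Wie(\mathfrak{o}_7)=\mathfrak{o}_{10}$ creates a new drifter), and they are handled separately in the proof of Proposition~\ref{Prop:Moves_Exc2_unoriented} — so just be sure your enumeration of drifter positions relative to $e$ includes the external cells and matches the configurations actually realizable in a TFPL of excess at most $2$.
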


\begin{proof} If $o$ and $e$ are external cells such that a vertex of $e$ is incident to a drifter, then $o=\overline{\mathfrak{o}}_1$, $e=\overline{\mathfrak{e}}_1$ and $e'=\overline{\mathfrak{e}}_2$. 
In that case, $e'$ and $e$ coincide with the sole exception that there is no horizontal edge between the two top vertices of $e$ whereas there is one between the two top vertices of $e'$. 
Suppose that $o$ and $e$ are internal cells such that $o$ does not contain a drifter, a vertex of $e$ is incident to a drifter and $o$ is not in $\{\mathfrak{o}_6$, $\mathfrak{o}_7$, $\mathfrak{o}_{12}\}$. 
Then $(o,e)$ can only occur as part of one of the following pairs:
\begin{center}
\includegraphics[width=1\textwidth]{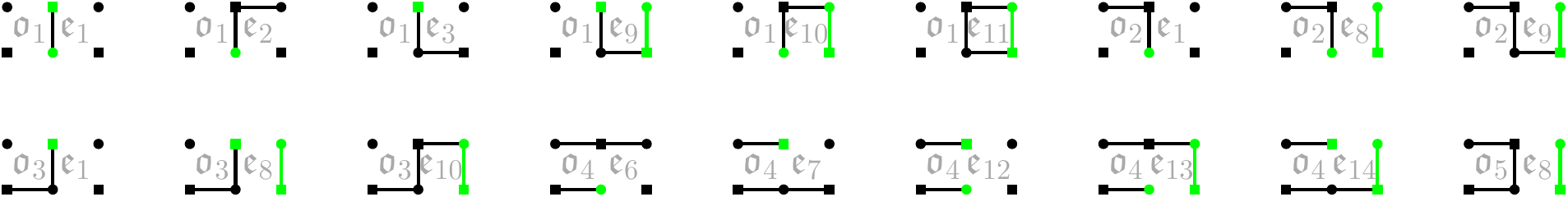} 
\end{center}
Now, $e'=\mathfrak{e}_5$ if $o=\mathfrak{o}_1$, $e'=\mathfrak{e}_3$ if $o=\mathfrak{o}_2$, $e'=\mathfrak{e}_2$ if $o=\mathfrak{o}_3$, 
$e'=\mathfrak{e}_4$ if $o=\mathfrak{o}_4$ and $e'=\mathfrak{e}_1$ if $o=\mathfrak{o}_5$. 
It can easily be checked that in any case $e'$ and $e$ satisfy the assertions.
\end{proof}

In the following, separate proofs for each case in Proposition~\ref{Prop:Moves_Exc2_unoriented} will be given. 

\begin{proof}[Proof of Proposition~\ref{Prop:Moves_Exc2_unoriented}(1)]
Let $f$ be an instable TFPL of excess at most $2$ that contains precisely one drifter $\mathfrak{d}$.
First, the case when $\mathfrak{d}$ is incident to a vertex $R_i$ in $\mathcal{R}^N$ is considered. In that case the cells $o_l$, $e_l$, $o_b$ and $e_b$ exist. Furthermore, both $o_l$ and $o_b$ do not contain a  
drifter and are not in $\{\mathfrak{o}_6,\mathfrak{o}_7,\mathfrak{o}_{12}\}$ because in $f$ there is only one drifter. Thus, by Lemma~\ref{Lemma:Wieland_boundary_cells_exc_2} on the one hand $e'_l$ 
and $e_l$ coincide with the sole exception that in $e'_l$ there is no drifter whereas in $e_l$ there is one and on the other hand 
$e'_b$ and $e_b$ coincide with the sole exception that in $e'_b$ the two top vertices are adjacent whereas in $e_b$ they are not. 
By Lemma~\ref{Lemma:Effected_cells_exc_2} the effect of left-Wieland drift on $f$ is that the drifter incident to $R_i$ is replaced by a horizontal edge incident to $R_{i+1}$ while the rest of $f$ is preserved.

It remains to consider the case when $\mathfrak{d}$ is not incident to a vertex in $\mathcal{R}^N$. In that case the cells $o_r$, $e_r$, $o_b$ and $e_b$ of $f$ have to exist. 
Since $f$ contains precisely one drifter $o_r\in\{\mathfrak{o}_{8},\mathfrak{o}_{9},\mathfrak{o}_{10},\mathfrak{o}_{11}\}$ by Lemma~\ref{Lemma:Excluded_cells_exc_2}.
It will be proceeded by treating each of the four possible cases for $o_r$ separately. 

First, the case when $o_r=\mathfrak{o}_8$ is regarded. In that case, $e_r=\mathfrak{e}_5$ because $f$ contains precisely one drifter. 
Furthermore, $\Wie(\mathfrak{o}_8)=\mathfrak{o}_8$ and therefore $e'_r=\mathfrak{e}_8$. On the other hand, $e'_b$ and $e_b$ coincide with the sole exception that 
the two top vertices in $e'_b$ are adjacent whereas in $e_b$ they are not by Lemma~\ref{Lemma:Wieland_boundary_cells_exc_2}. If the cells $o_t$, $e_t$, $o_l$ and $e_l$ exist, then both $o_t$ and $o_l$ cannot be in 
$\{\mathfrak{o}_6,\mathfrak{o}_7,\mathfrak{o}_{12}\}$ and for that reason $e'_t$ and $e_t$ coincide with the sole exception that the two bottom vertices in $e'_t$ are adjacent whereas 
in $e_t$ they are not and $e'_l$ and $e_l$ coincide with the sole exception that in $e'_l$ there is no drifter whereas in $e_l$ there is one by 
Lemma~\ref{Lemma:Wieland_boundary_cells_exc_2}. By Lemma~\ref{Lemma:Effected_cells_exc_2}, the effect of left-Wieland drift on $f$ is that the move $M_1$ is applied to $\mathfrak{d}$ while the rest of $f$ 
is preserved.

Next, the case when $o_r=\mathfrak{o}_9$ is considered. In that case, $e_r=\mathfrak{e}_2$, $o_b=\mathfrak{o}_7$ and $e_b=\mathfrak{e}_4$. 
Furthermore, $\Wie(\mathfrak{o}_9)=\mathfrak{o}_6$ that is $e'_r=\mathfrak{e}_6$ and $\Wie(\mathfrak{o}_7)=\mathfrak{o}_{10}$ that is $e'_b=\mathfrak{e}_{10}$. 
If the cells $o_l$, $e_l$, $o_t$ and $e_t$ exist, then neither $o_t$ nor $o_l$ is in $\{\mathfrak{o}_6,\mathfrak{o}_7,\mathfrak{o}_{12}\}$. By Lemma~\ref{Lemma:Wieland_boundary_cells_exc_2} and 
Lemma~\ref{Lemma:Effected_cells_exc_2} the effect of left-Wieland drift on $f$ is that the move $M_2$ is applied to $\mathfrak{d}$ while the rest of $f$ is preserved. 

Next, the case when $o_r=\mathfrak{o}_{10}$ is regarded. In that case, $o_t$, $e_t$, $o_l$ and $e_l$ exist. Furthermore, $e_r=\mathfrak{e}_3$, $o_t=\mathfrak{o}_6$ and $e_t=\mathfrak{e}_4$.
Therefore, $e'_r=\mathfrak{e}_7$ and $e'_t=\mathfrak{e}_9$. Since neither $o_b$ nor $o_l$ is in $\{\mathfrak{o}_6,\mathfrak{o}_7,\mathfrak{o}_{12}\}$ by Lemma~\ref{Lemma:Wieland_boundary_cells_exc_2} and 
Lemma~\ref{Lemma:Effected_cells_exc_2} the effect of left-Wieland drift on $f$ is that the move $M_3$ is applied to $\mathfrak{d}$ while the rest of $f$ is preserved.

Finally, the case when $o_r=\mathfrak{o}_{11}$ is checked. In that case $o_t$, $e_t$, $o_l$ and $e_l$ exist. Furthermore, $e_r=\mathfrak{e}_1$, $o_b=\mathfrak{o}_7$, $e_b=\mathfrak{e}_4$, 
$o_l=\mathfrak{o}_5$, $e_l=\mathfrak{e}_8$, $o_t=\mathfrak{o}_6$ and $e_t=\mathfrak{e}_{4}$. Therefore, $e'_r=\mathfrak{e}_{12}$, $e'_b=\mathfrak{e}_{10}$, 
$e'_l=\mathfrak{e}_{1}$ and $e'_t=\mathfrak{e}_{9}$. By Lemma~\ref{Lemma:Effected_cells_exc_2} the effect of left-Wieland drift on $f$ is that the move $M_4$ is applied to 
$\mathfrak{d}$ while the rest of $f$ is preserved.
\end{proof}

\begin{proof}[Proof of Proposition~\ref{Prop:Moves_Exc2_unoriented}(2a)] 
Let $f$ be a TFPL of excess $2$ that contains two drifters which are both incident to a vertex in $\mathcal{R}^N$. In the following, denote the two drifter in $f$ by $\mathfrak{d}$ and $\mathfrak{d}^\ast$ 
and let $R_i$ and $R_{i^\ast}$ be the vertices in $\mathcal{R}^N$ to which $\mathfrak{d}$ and $\mathfrak{d}^\ast$ are incident.
The cells $o_l$, $e_l$, $o_b$ and $e_b$ and the cells $o_l^\ast$, $e_l^\ast$, $o_b^\ast$ and $e_b^\ast$ exist. Furthermore, none of the cells $o_l$, $o_b$, $o_l^\ast$ and $o_b^\ast$ contains a drifter 
or is of the form $\mathfrak{o}_6$, $\mathfrak{o}_7$ or $\mathfrak{o}_{12}$. Therefore, by Lemma~\ref{Lemma:Wieland_boundary_cells_exc_2} and Lemma~\ref{Lemma:Effected_cells_exc_2} the effect of left-Wieland drift on $f$ 
is that both drifters are replaced by horizontal edges incident to $R_{i+1}$ and $R_{i^\ast+1}$ while the rest of $f$ remains unchanged. 
\end{proof}

\begin{proof}[Proof of Proposition~\ref{Prop:Moves_Exc2_unoriented}(2b)] 
Let $f$ be a TFPL of excess $2$ that contains two drifters $\mathfrak{d}$ and $\mathfrak{d}^{\ast}$ whereof $\mathfrak{d}$ is incident to a vertex $R_i$ in $\mathcal{R}^N$ and $\mathfrak{d}^\ast$ is not 
incident to a vertex in $\mathcal{R}^N$. Note that $f$ contains neither a cell of type $\mathfrak{o}_{11}$ nor of type $\mathfrak{e}_{11}$. That is because when adding the canonical orientation 
to $f$ such a cell would give rise to two local configurations that are counted by the excess which would imply that $f$ is of excess greater than $2$.  
As a start, suppose that no vertex of $o_l$ and $o_b$ is incident to $\mathfrak{d}^\ast$. In that case $e'_l$ and $e_l$ coincide with the sole exception that in 
$e'_l$ there is no drifter and $e'_b$ and $e_b$ coincide with the sole exception that in $e'_b$ the two top vertices are adjacent whereas in $e_b$ 
they are not by Lemma~\ref{Lemma:Wieland_boundary_cells_exc_2}. On the other hand, since $\mathfrak{d}^\ast$ is not incident to a vertex in $\mathcal{R}^N$ the cells $o_r^\ast$, $e_r^\ast$, $o_b^\ast$ and $e_b^\ast$ 
exist. Futhermore, no vertex of $o_r^\ast$, $e_r^\ast$, $o_b^\ast$ and $e_b^\ast$ is incident to $\mathfrak{d}$. If the cells $o_l^\ast$, $e_l^\ast$, $o_t^\ast$ and $e_t^\ast$ exist then also no vertex of these cells 
is incident to $\mathfrak{d}$. For those reasons, by analogous arguments as in the proof of Proposition~\ref{Prop:Moves_Exc2_unoriented}(1) the effect of left-Wieland drift on $f$ is that $\mathfrak{d}$ is replaced by a 
horizontal edge incident to $R_{i+1}$ before a unique move of $\{M_1,M_2,M_3\}$ is applied to $\mathfrak{d}^\ast$. The rest of $f$ is preserved by left-Wieland drift.

Now, if the bottom right vertex of $o_b$ is incident to $\mathfrak{d}^\ast$, then $o_r^\ast\in\{\mathfrak{o}_8,\mathfrak{o}_9,\mathfrak{o}_{10}\}$. If $o_r^\ast=\mathfrak{o}_8$, then $e_r^\ast=\mathfrak{e}_5$. 
Furthermore, $o_l$, $o_b$, $o_l^\ast$ and $o_b^\ast$ 
do not contain a drifter and are not in $\{\mathfrak{o}_6,\mathfrak{o}_7,\mathfrak{o}_{12}\}$. Thus, the effect of left-Wieland drift is that $\mathfrak{d}$ is replaced by a horizontal edge incident to $R_{i+1}$ before 
the move $M_1$ is applied to $\mathfrak{d}^\ast$ while the rest of $f$ is preserved. If $o_r^\ast=\mathfrak{o}_9$, then $e_r^\ast=\mathfrak{e}_2$, $o_b^\ast=\mathfrak{o}_7$ and $e_b^\ast=\mathfrak{e}_4$. Additionally, 
$o_l$, $o_b$ and $o_l$ do not contain a drifter and are not in $\{\mathfrak{o}_6,\mathfrak{o}_7,\mathfrak{o}_{12}\}$. Therefore, the effect of left-Wieland drift is that $\mathfrak{d}$ is replaced by a horizontal edge 
incident to $R_{i+1}$ before the move $M_2$ is applied to $\mathfrak{d}^\ast$ while the rest of $f$ is preserved. Finally, if $o_r^\ast=\mathfrak{o}_{10}$, then $e_r^\ast=\mathfrak{e}_3$, $o_b=\mathfrak{o}_6$ and 
$e_b=\mathfrak{e}_7$. Furthermore, $o_l$, $o_l^\ast$ and $o_b^\ast$ do not contain a drifter and are not in $\{\mathfrak{o}_6,\mathfrak{o}_7,\mathfrak{o}_{12}\}$. For those reasons, 
the effect of left-Wieland drift is that $\mathfrak{d}$ is replaced by a horizontal edge 
incident to $R_{i+1}$ before the move $M_3$ is applied to $\mathfrak{d}^\ast$ while the rest of $f$ is preserved.

Next, if $o_l$ contains $\mathfrak{d}^\ast$, then $o_l=\mathfrak{o}_{10}$, $e_l=\mathfrak{e}_9$, $o_t^\ast=\mathfrak{o}_6$, $e_t^\ast=\mathfrak{e}_4$, $o_b=\mathfrak{o}_4$, $e_b=\mathfrak{e}_7$ and 
\hbox{$o_b^\ast\notin\{\mathfrak{o}_6, \mathfrak{o}_7, \mathfrak{o}_{12}\}$}. Therefore, $e'_l=\mathfrak{e}_7$, $e'_t=\mathfrak{e}_9$, $e'_b=\mathfrak{e}_4$ and 
by Lemma~\ref{Lemma:Wieland_boundary_cells_exc_2} the cells $e^{\ast\prime}_b$ and $e_b^\ast$ coincide with the sole exception that in $e^{\ast\prime}_b$ there is an edge between the two top 
vertices whereas in $e_b^\ast$ there is none. For those reasons, the effect of left-Wieland drift on $f$ is that $\mathfrak{d}$ is replaced by a horizontal edge incident to $R_{i+1}$ before the move $M_3$ is applied to 
$\mathfrak{d}^\ast$ while the rest of $f$ is preserved.

Finally, if $o_b$ contains $\mathfrak{d}^\ast$, then $o_b\in\{\mathfrak{o}_8,\mathfrak{o}_9\}$.
If $o_b=\mathfrak{o}_8$, then $e_b=\mathfrak{e}_3$. Furthermore, none of the cells $o_l$, $o_l^\ast$ and $o_b^\ast$ is in 
$\{\mathfrak{o}_6, \mathfrak{o}_7, \mathfrak{o}_{12}\}$. Thus, the effect of left-Wieland drift on $f$ is that 
$\mathfrak{d}$ is replaced by a horizontal edge incident to $R_{i+1}$ before the move $M_1$ is applied to 
$\mathfrak{d}^\ast$ while the rest of $f$ is preserved. On the other hand, if $o_b=\mathfrak{o}_9$, then $e_b=\mathfrak{e}_1$, $o_b^\ast=\mathfrak{o}_7$ and $e_b^\ast=\mathfrak{e}_4$. Additionally, $o_l$ and $o_l^\ast$ 
do not contain a drifter and are not in $\{\mathfrak{o}_6, \mathfrak{o}_7, \mathfrak{o}_{12}\}$. Therefore, the effect of left-Wieland drift on $f$ is that $\mathfrak{d}$ is replaced by a horizontal edge incident to 
$R_{i+1}$ before the move $M_2$ is applied to $\mathfrak{d}^\ast$ while the rest of $f$ is preserved.
\end{proof}

\begin{proof}[Proof of Proposition~\ref{Prop:Moves_Exc2_unoriented}(2c)] 
Let $f$ be a TFPL of excess $2$ that contains two drifters whereof none is incident to a vertex in $\mathcal{R}^N$. In that case the cells $o_r$, $e_r$, $o_b$, $e_b$ and the cells 
$o_r^{\ast}$, $e_r^{\ast}$, $o_b^{\ast}$, $e_b^{\ast}$ exist.
Furthermore, both $o_r$ and $o_r^{\ast}$ have to be in $\{\mathfrak{o}_{8},\mathfrak{o}_{9},\mathfrak{o}_{10},\mathfrak{o}_{13},\mathfrak{o}_{14}\}$.
It is started with the case when no vertex of the cells $o_r$, $e_r$, $o_b$, $e_b$ is incident to $\mathfrak{d}^{\ast}$ and no vertex of the cells 
$o_r^{\ast}$, $e_r^{\ast}$, $o_b^{\ast}$, $e_b^{\ast}$ is incident to $\mathfrak{d}$. 
This implies that if the cells $o_l$, $e_l$, $o_t$ and $e_t$ exist then none of their vertices is incident to $\mathfrak{d}^\ast$ and if the cells $o_l^{\ast}$, $e_l^{\ast}$, $o_t^{\ast}$ and $e_t^{\ast}$ exist then 
none of their vertices is incident to $\mathfrak{d}$. Therefore, by the same arguments as in the proof of Proposition~\ref{Prop:Moves_Exc2_unoriented}(1) the effect of left-Wieland drift on $f$ is that simultaneously 
to each of the two drifters $\mathfrak{d}$ and $\mathfrak{d}^\ast$ a unique move in $\{M_1,M_2,M_3\}$ is applied while the rest of $f$ is conserved. Since the moves can be 
performed simultaneously they can be performed in the order stated in Proposition~\ref{Prop:Moves_Exc2_unoriented}(2).

It remains to study the case when a vertex of $o_r$, $e_r$, $o_b$ or $e_b$ is incident to $\mathfrak{d}^{\ast}$ or a vertex of $o_r^{\ast}$, $e_r^{\ast}$, $o_b^{\ast}$ or $e_b^{\ast}$ is incident to $\mathfrak{d}$. 
Hence, without loss of generality assume that a vertex of the cells $o_r$, $e_r$, $o_b$, $e_b$ that is not the top right vertex of $o_r$ 
is incident to the drifter $\mathfrak{d}^{\ast}$. Then $o_r$ does not equal $\mathfrak{o}_{14}$ and $o_r^{\ast}$ does not equal $\mathfrak{o}_{13}$. 

As a start, the case when the bottom right vertex of $o_b$ is incident to $\mathfrak{d}^{\ast}$ is considered. In that case $\mathfrak{d}$ and $\mathfrak{d}^\ast$ have the same $x$-coordinate 
and $\mathfrak{d}$ has the larger $y$-coordinates than $\mathfrak{d}^\ast$.
If the cells $o_t$, $e_t$, $o_l$ and $e_l$ exist then $o_t$ neither equals $\mathfrak{o}_7$ nor $\mathfrak{o}_{12}$. Furthermore, if $o_t=\mathfrak{o}_6$ then $e_t=\mathfrak{e}_4$, $o_r=\mathfrak{o}_{10}$ and 
$e_r=\mathfrak{e}_3$. Thus, $e'_t=\mathfrak{e}_9$ and $e'_r=\mathfrak{e}_7$. On the other hand, if $o_t$ does not equal $\mathfrak{o}_6$ then  
$e'_t$ and $e_t$ coincide with the sole exception that in $e'_t$ there is a horizontal edge between its two bottom vertices whereas in $e_t$ there is none 
by Lemma~\ref{Lemma:Wieland_boundary_cells_exc_2}. Since neither $o_l$ nor $o_l^\ast$ equals $\mathfrak{o}_6$, $\mathfrak{o}_7$ or $\mathfrak{o}_{12}$ the cells $e'_l$ and $e_l$ (\textit{resp.} 
$e^{\ast\prime}_l$ and $e_l^\ast$) coincide with the sole exception that in $e'_l$ (\textit{resp.} $e^{\ast\prime}_l$) there is no drifter
by Lemma~\ref{Lemma:Wieland_boundary_cells_exc_2}. 
Finally, $o_b^\ast$ does neither equal $\mathfrak{o}_6$ nor $\mathfrak{o}_{12}$ and if it equals $\mathfrak{o}_7$ then $e_b^\ast=\mathfrak{e}_4$, $o_r^\ast=\mathfrak{o}_9$, 
$e_r^\ast=\mathfrak{e}_2$, $e^{\ast\prime}_b=\mathfrak{e}_{10}$ and $e^{\ast\prime}_r=\mathfrak{e}_3$.

By Lemma~\ref{Lemma:Effected_cells_exc_2}, it remains to study the cells $o_r$, $e_r$, $o_b$, $e_b$,
$o_r^{\ast}$, $e_r^{\ast}$, $e_r'$, $e_b'$ and $e_r^{\ast\prime}$. A list of all possible configurations in the cells $o_r$, $e_r$, $o_b$, $e_b$,
$o_r^{\ast}$, $e_r^{\ast}$, $e_r'$, $e_b'$ and $e_r^{\ast\prime}$ is given in Table~\ref{Table:Drifter_bottom_right_o_b_exc_2}.

\begin{table}[tbh]
\begin{center}
\begin{tabular}{c!{\vrule width 1.35pt}c!{\vrule width 1pt}c!{\vrule width 1pt}c!{\vrule width 1pt}c!{\vrule width 1pt}c!{\vrule width 1pt}c!{\vrule width 1pt}c!{\vrule width 1pt}c!{\vrule width 1pt}c
!{\vrule width 1pt}c!{\vrule width 1pt}c!{\vrule width 1pt}c}
$o_r$ & $\mathfrak{o}_{8}$ & $\mathfrak{o}_{8}$ & $\mathfrak{o}_{8}$ & $\mathfrak{o}_{8}$ & $\mathfrak{o}_{8}$ & $\mathfrak{o}_{9}$ & $\mathfrak{o}_{9}$ & $\mathfrak{o}_{9}$ & 
$\mathfrak{o}_{10}$ & $\mathfrak{o}_{10}$ & $\mathfrak{o}_{10}$ & $\mathfrak{o}_{10}$ \notag\\
\hline
$e_r$ & $\mathfrak{e}_5$ & $\mathfrak{e}_5$ & $\mathfrak{e}_5$ & $\mathfrak{e}_5$ & $\mathfrak{e}_5$ & $\mathfrak{e}_2$ & $\mathfrak{e}_2$ & $\mathfrak{e}_2$ & $\mathfrak{e}_3$ & $\mathfrak{e}_3$ & $\mathfrak{e}_3$
& $\mathfrak{e}_3$ \notag\\
\hline
$o_r^\ast$ & $\mathfrak{o}_{8}$ & $\mathfrak{o}_{8}$ & $\mathfrak{o}_{9}$ & $\mathfrak{o}_{9}$ & $\mathfrak{o}_{10}$ & $\mathfrak{o}_{8}$ & $\mathfrak{o}_{9}$ & $\mathfrak{o}_{10}$ & $\mathfrak{o}_{8}$ & 
$\mathfrak{o}_{8}$ & $\mathfrak{o}_{9}$ & $\mathfrak{o}_{10}$ \notag\\
\hline
$e_r^\ast$ & $\mathfrak{e}_5$ & $\mathfrak{e}_5$ & $\mathfrak{e}_2$ & $\mathfrak{e}_2$ & $\mathfrak{e}_3$ & $\mathfrak{e}_5$ & $\mathfrak{e}_2$ & $\mathfrak{e}_3$ & $\mathfrak{e}_5$ & $\mathfrak{e}_5$ &
$\mathfrak{e}_2$ & $\mathfrak{e}_3$\notag\\
\hline
$o_b$ & $\mathfrak{o}_{1}$ & $\mathfrak{o}_{4}$ & $\mathfrak{o}_{1}$ & $\mathfrak{o}_{4}$ & $\mathfrak{o}_{6}$ & $\mathfrak{o}_{7}$ & $\mathfrak{o}_{7}$ & $\mathfrak{o}_{12}$ & 
$\mathfrak{o}_{1}$ & $\mathfrak{o}_{4}$ & $\mathfrak{o}_{4}$ & $\mathfrak{o}_{6}$ \notag\\
\hline
$e_b$ & $\mathfrak{e}_1$ & $\mathfrak{e}_{12}$ & $\mathfrak{e}_1$ & $\mathfrak{e}_{12}$ & $\mathfrak{e}_7$ & $\mathfrak{e}_6$ & $\mathfrak{e}_6$ & $\mathfrak{e}_4$ & $\mathfrak{e}_1$ & $\mathfrak{e}_{12}$ & 
$\mathfrak{e}_{12}$ & $\mathfrak{e}_7$\notag\\
\noalign{\hrule height 1.25pt}
$e'_r$ & $\mathfrak{e}_{8}$ & $\mathfrak{e}_{8}$ & $\mathfrak{e}_{8}$ & $\mathfrak{e}_{8}$ & $\mathfrak{e}_{8}$ & $\mathfrak{e}_{6}$ & $\mathfrak{e}_{6}$ & 
$\mathfrak{e}_{6}$ & $\mathfrak{e}_{7}$ & $\mathfrak{e}_{7}$ & $\mathfrak{e}_{7}$ & $\mathfrak{e}_{7}$ \notag\\
\hline
$e^{\ast\prime}_r$ & $\mathfrak{e}_{8}$ & $\mathfrak{e}_{8}$ & $\mathfrak{e}_{6}$ & $\mathfrak{e}_{6}$ & $\mathfrak{e}_{7}$ & $\mathfrak{e}_{8}$ & $\mathfrak{e}_{6}$ & 
$\mathfrak{e}_{7}$ & $\mathfrak{e}_{8}$ & $\mathfrak{e}_{8}$ & $\mathfrak{e}_{6}$ & $\mathfrak{e}_{7}$ \notag\\
\hline
$e'_b$ & $\mathfrak{e}_{15}$ & $\mathfrak{e}_{4}$ & $\mathfrak{e}_{15}$ & $\mathfrak{e}_{4}$ & $\mathfrak{e}_{9}$ & $\mathfrak{e}_{10}$ & $\mathfrak{e}_{10}$ & 
$\mathfrak{e}_{11}$ & $\mathfrak{e}_{15}$ & $\mathfrak{e}_{4}$ & $\mathfrak{e}_{4}$ & $\mathfrak{e}_{9}$ \notag\\
\end{tabular}
\end{center}
\caption{The cells $o_r,e_r,o_r^\ast,e_r^\ast, o_b$ and $e_b$ of $f$ and the cells $e'_r,e^{\ast\prime}_r$ and $e'_b$ of 
$\operatorname{WL}(f)$ in the case when $\mathfrak{d}^{\ast}$ is incident to the bottom right vertex of $o_b$ in $f$.}
\label{Table:Drifter_bottom_right_o_b_exc_2}
\end{table}

In summary, left-Wieland drift has the following effect:
\begin{itemize}
\item The move $M_5$ is applied if $o_r=\mathfrak{o}_9$ and $o_r^\ast=\mathfrak{o}_{10}$.
 \item The move $M_1$/$M_2$ is applied to $\mathfrak{d}^\ast$ before the move $M_2$ is applied to $\mathfrak{d}$ if 
 $o_r=\mathfrak{o}_9$ and $o_r^\ast=\mathfrak{o}_8/\mathfrak{o}_{9}$.
 \item The move $M_1$ is applied to $\mathfrak{d}$ before the move $M_1$/$M_2$/$M_3$ is applied to $\mathfrak{d}^\ast$ if $o_r=\mathfrak{o}_8$ and 
 $o_r^\ast=\mathfrak{o}_8/\mathfrak{o}_9/\mathfrak{o}_{10}$.
 \item The move $M_3$ is applied to $\mathfrak{d}$ before the move $M_1$/$M_2$/$M_3$  is applied to $\mathfrak{d}^\ast$ if 
 $o_r=\mathfrak{o}_{10}$ and $o_r^\ast=\mathfrak{o}_8/\mathfrak{o}_9/\mathfrak{o}_{10}$.
\end{itemize}
In all cases the rest of $f$ is preserved by left-Wieland drift.\\ 

Next, the case when the drifter $\mathfrak{d}^\ast$ is contained in $e_r$ is studied. In that case the $x$-coordinate of $\mathfrak{d}^\ast$ is larger than the one of $\mathfrak{d}$. 
Note that $(o_r,e_r)\in\{(\mathfrak{o}_9,\mathfrak{e}_{10}),(\mathfrak{o}_{10},\mathfrak{e}_9)\}$ 
since $f$ contains neither of the cells $\mathfrak{o}_{11}$ and $\mathfrak{e}_{11}$.
Now, if $o_r=\mathfrak{o}_9$ then $o_b=\mathfrak{o}_7$, $e_b=\mathfrak{e}_4$, $o_t^\ast=\mathfrak{o}_4$, $e_t^\ast=\mathfrak{e}_6$ and 
$(o_r^\ast,e_r^\ast)\in\{(\mathfrak{o}_8,\mathfrak{e}_5),(\mathfrak{o}_9,\mathfrak{e}_2)\}$. 
Furthermore, if $o_r^\ast=\mathfrak{o}_9$ then $o_b^\ast=\mathfrak{o}_7$ and $e_b^\ast=\mathfrak{e}_4$. Thus,
$e'_r=\mathfrak{e}_6$, $e'_b=\mathfrak{e}_{10}$, $e^{\ast\prime}_t=\mathfrak{e}_4$ 
and if $o_r^\ast=\mathfrak{o}_9$ then $e^{\ast\prime}_r=\mathfrak{e}_6$ and $e^{\ast\prime}_b=\mathfrak{e}_{10}$.
On the other hand, if $o_r=\mathfrak{o}_{10}$ then $o_t=\mathfrak{o}_6$, $e_t=\mathfrak{e}_4$, $o_b^\ast=\mathfrak{o}_4$, $e_b^\ast=\mathfrak{e}_7$ and 
$(o_r^\ast,e_r^\ast)\in\{(\mathfrak{o}_8,\mathfrak{e}_5),(\mathfrak{o}_{10},\mathfrak{e}_3)\}$. Furthermore, if $o_r^\ast=\mathfrak{o}_{10}$ then 
$o_t^\ast=\mathfrak{o}_6$ and $e_t^\ast=\mathfrak{e}_4$. Thus, 
$e'_r=\mathfrak{e}_7$, $e'_t=\mathfrak{e}_9$ and 
$e^{\ast\prime}_b=\mathfrak{e}_4$ and if $o_r^\ast=\mathfrak{o}_{10}$ then $e^{\ast\prime}_r=\mathfrak{e}_7$ and $e^{\ast\prime}_t=\mathfrak{e}_9$.
By Lemma~\ref{Lemma:Effected_cells_exc_2} and Lemma~\ref{Lemma:Wieland_boundary_cells_exc_2} the effect of left-Wieland drift is the following:
\begin{itemize}
 \item The move $M_1$/$M_2$ is applied to $\mathfrak{d}^\ast$ before the move $M_2$ is applied to $\mathfrak{d}$ 
 if $o_r=\mathfrak{o}_9$ and $o_r^\ast=\mathfrak{o}_8/\mathfrak{o}_{9}$.
 \item The move $M_1$/$M_3$ is applied to $\mathfrak{d}^\ast$ before the move $M_3$ is applied to $\mathfrak{d}$ 
 if $o_r=\mathfrak{o}_{10}$ and $o_r^\ast=\mathfrak{o}_8/\mathfrak{o}_{10}$.
\end{itemize}
In both cases the rest of $f$ is preserved by left-Wieland drift.\\

Next, the case when $\mathfrak{d}^\ast$ is contained in $e_b$ is regarded. In that case the $x$-coordinate of $\mathfrak{d}^\ast$ is larger than the one of $\mathfrak{d}$. 
The cells $o_l$ and $o_b$ are both not contained in $\{\mathfrak{o}_{6},\mathfrak{o}_{7},\mathfrak{o}_{12}\}$. For instance, it is not possible that $o_b$ equals $\mathfrak{o}_{7}$ because then $e_b$ would have to equal 
$e_{15}$ or the bottom right vertex of $o_b$ would be incident to a drifter. As a start, if $o_t$ exists then it cannot be in $\{\mathfrak{o}_{7},\mathfrak{o}_{12}\}$. Furthermore, if $o_t=\mathfrak{o}_6$ then 
$e_t=\mathfrak{e}_4$, $o_r=\mathfrak{o}_{10}$, $e_r=\mathfrak{e}_3$, $e_t'=\mathfrak{e}_9$ and $e_r'=\mathfrak{e}_7$. On the other hand, $o_b^\ast$ cannot be in $\{\mathfrak{o}_{6},\mathfrak{o}_{12}\}$. Furthermore, 
if $o_b^\ast=\mathfrak{o}_7$ then $e_b^\ast=\mathfrak{e}_4$, $o_r^\ast=\mathfrak{o}_{9}$, $e_r^\ast=\mathfrak{e}_2$, $e_b^{\ast\prime}=\mathfrak{e}_{10}$ and $e_r^{\ast\prime}=\mathfrak{e}_6$.
To determine the effect of left-Wieland drift on $f$ it remains to study the cells $o_r^\ast$, $e_r^\ast$, $o_r$, $e_r$, $e_r^{\ast\prime}$ and $e_r'$. In Table~\ref{Table:Drifter_o_b_exc_2} all possible configurations 
in these cells are listed.

\begin{table}[tbh]
\begin{center}
\begin{tabular}{c!{\vrule width 1.35pt}c!{\vrule width 1pt}c!{\vrule width 1pt}c!{\vrule width 1pt}c!{\vrule width 1pt}c}
$o_r^\ast$ & $\mathfrak{o}_{8}$ & $\mathfrak{o}_{8}$ & $\mathfrak{o}_{9}$ & $\mathfrak{o}_{9}$ & $\mathfrak{o}_{10}$ \notag\\ 
\hline
$e_r^\ast$ & $\mathfrak{e}_{5}$ & $\mathfrak{e}_{5}$ & $\mathfrak{e}_{2}$ & $\mathfrak{e}_{2}$ & $\mathfrak{e}_{3}$  \notag\\
\hline
$o_r$ & $\mathfrak{o}_{8}$ & $\mathfrak{o}_{10}$ & $\mathfrak{o}_{8}$ & $\mathfrak{o}_{10}$ & $\mathfrak{o}_{13}$ \notag\\
\hline
$e_r$ & $\mathfrak{e}_{2}$ & $\mathfrak{e}_{1}$ & $\mathfrak{e}_{2}$ & $\mathfrak{e}_{1}$ & $\mathfrak{e}_{4}$  \notag\\
\noalign{\hrule height 1.25pt}
$e_r^{\ast\prime}$ & $\mathfrak{e}_{8}$ & $\mathfrak{e}_{8}$ & $\mathfrak{e}_{6}$ & $\mathfrak{e}_{6}$ & $\mathfrak{e}_{7}$  \notag\\ 
\hline
$e_r'$ & $\mathfrak{e}_{8}$ & $\mathfrak{e}_{7}$ & $\mathfrak{e}_{8}$ & $\mathfrak{e}_{7}$ & $\mathfrak{e}_{14}$ \notag\\
\end{tabular}
\end{center}
\caption{The cells $o_r^\ast$, $e_r^\ast$, $o_r$ and $e_r$ of $f$ and the cells $e_r^{\ast\prime}$ and $e'_r$ of $\operatorname{WL}(f)$ in the case when 
$\mathfrak{d}^{\ast}$ is contained in $e_b$.}
\label{Table:Drifter_o_b_exc_2}
\end{table}

In summary, the effect of left-Wieland drift on $f$ is the following:
\begin{itemize}
 \item The move $M_1$ is applied to $\mathfrak{d}^\ast$ before the move $M_1$/$M_3$ is applied to $\mathfrak{d}$ if $o_r^\ast=\mathfrak{o}_8$ and $o_r=\mathfrak{o}_8/\mathfrak{o}_{10}$.
 \item The move $M_2$ is applied to $\mathfrak{d}^\ast$ before the move $M_1$/$M_3$ is applied to $\mathfrak{d}$ if $o_r^\ast=\mathfrak{o}_9$ and $o_r=\mathfrak{o}_8/\mathfrak{o}_{10}$.
 \item The move $M_3$ is applied to $\mathfrak{d}^\ast$ before it also is applied to $\mathfrak{d}$ if $o_r^\ast=\mathfrak{o}_{10}$ and $o_r=\mathfrak{o}_{13}$.
\end{itemize}
In all cases the rest of $f$ is preserved by left-Wieland drift.\\

The last case that is to be considered is the case when $\mathfrak{d}^\ast$ is contained in $o_b$. In that case the $x$-coordinate of $\mathfrak{d}$ is larger than the one of $\mathfrak{d}^\ast$. 
Furthermore, the cells $o_l$ and $o_l^\ast$ are not contained in $\{\mathfrak{o}_6, \mathfrak{o}_7, \mathfrak{o}_12\}$, if $o_t$ exists then it cannot be in $\{\mathfrak{o}_{7},\mathfrak{o}_{12}\}$ and 
$o_b^\ast$ cannot be in $\{\mathfrak{o}_{6},\mathfrak{o}_{12}\}$. 
On the other hand, if $o_t=\mathfrak{o}_6$ then $e_t=\mathfrak{e}_4$, $o_r=\mathfrak{o}_{10}$, $e_r=\mathfrak{e}_3$, $e_t'=\mathfrak{e}_9$ and $e_r'=\mathfrak{e}_7$ 
and if $o_b^\ast=\mathfrak{o}_7$ then $e_b^\ast=\mathfrak{e}_4$, $o_r^\ast=\mathfrak{o}_{9}$, $e_r^\ast=\mathfrak{e}_2$, $e_b^{\ast\prime}=\mathfrak{e}_{10}$ and $e_r^{\ast\prime}=\mathfrak{e}_6$.
To determine the effect of left-Wieland drift on $f$ it remains to study the cells $o_r^\ast$, $e_r^\ast$, $o_r$, $e_r$, $e_r^{\ast\prime}$ and $e_r'$. In Table~\ref{Table:Drifter_o_b_exc_2} all possible configurations 
in these cells are listed.

\begin{table}[tbh]
\begin{center}
\begin{tabular}{c!{\vrule width 1.35pt}c!{\vrule width 1pt}c!{\vrule width 1pt}c!{\vrule width 1pt}c!{\vrule width 1pt}c}
$o_r$ & $\mathfrak{o}_{8}$ & $\mathfrak{o}_{8}$ & $\mathfrak{o}_{9}$ & $\mathfrak{o}_{10}$ & $\mathfrak{o}_{10}$ \notag\\ 
\hline
$e_r$ & $\mathfrak{e}_{5}$ & $\mathfrak{e}_{5}$ & $\mathfrak{e}_{2}$ & $\mathfrak{e}_{3}$ & $\mathfrak{e}_{3}$  \notag\\
\hline
$o_b$ & $\mathfrak{o}_{8}$ & $\mathfrak{o}_{9}$ & $\mathfrak{o}_{14}$ & $\mathfrak{o}_{8}$ & $\mathfrak{o}_{9}$ \notag\\
\hline
$e_b$ & $\mathfrak{e}_{3}$ & $\mathfrak{e}_{1}$ & $\mathfrak{e}_{4}$ & $\mathfrak{e}_{3}$ & $\mathfrak{e}_{1}$  \notag\\
\noalign{\hrule height 1.25pt}
$e'_r$ & $\mathfrak{e}_{8}$ & $\mathfrak{e}_{8}$ & $\mathfrak{e}_{6}$ & $\mathfrak{e}_{7}$ & $\mathfrak{e}_{7}$  \notag\\ 
\hline
$e'_b$ & $\mathfrak{e}_{8}$ & $\mathfrak{e}_{6}$ & $\mathfrak{e}_{3}$ & $\mathfrak{e}_{8}$ & $\mathfrak{e}_{6}$ \notag\\
\end{tabular}
\end{center}
\caption{The cells $o_r$, $e_r$, $o_b$ and $e_b$ of $f$ and the cells $e'_r$ and $e'_b$ of $\operatorname{WL}(f)$ in the case when 
$\mathfrak{d}^{\ast}$ is contained in $o_b$.}
\label{Table:Drifter_o_b_exc_2}
\end{table}

By Lemma~\ref{Lemma:Effected_cells_exc_2} and Lemma~\ref{Lemma:Wieland_boundary_cells_exc_2} the effect of left-Wieland drift on $f$ is the following:
\begin{itemize}
 \item The move $M_1$ is applied to $\mathfrak{d}$ before the move $M_1$/$M_2$ is applied to $\mathfrak{d}^\ast$ if $o_r=\mathfrak{o}_{8}$ and $o_b=\mathfrak{o}_8/\mathfrak{o}_9$.
 \item The move $M_2$ is first applied to $\mathfrak{d}$ and then to $\mathfrak{d}^\ast$ if $o_r=\mathfrak{o}_9$ and $o_b=\mathfrak{o}_{14}$.
 \item The move $M_3$ is applied to $\mathfrak{d}$ before the move $M_1$/$M_2$ is applied to $\mathfrak{d}^\ast$ if $o_r=\mathfrak{o}_{10}$ and $o_b=\mathfrak{o}_8/\mathfrak{o}_9$.
\end{itemize}
\end{proof}

The description of the effect of right-Wieland drift on an instable TFPL of excess at most $2$ follows from the description of the effect of left-Wieland drift on an instable TFPL of excess at most $2$ by vertical symmetry.

\begin{Prop}\label{Prop:Moves_Exc2_unoriented_right} 
Let $f$ be an instable TFPL with boundary $(u,v;w)$ such that $\operatorname{exc}(u,v;w)\leq 2$. Furthermore, let $v^-$ be a word so that $v^-\stackrel{\mathrm{v}}{\longrightarrow}v$. Then the image of $f$ under right-Wieland drift with respect to $v^-$ is determined as follows:
\begin{enumerate}
\item if $L_i$ in $\mathcal{L}^N$ is incident to a drifter delete that drifter and add a horizontal edge incident to $L_{i-1}$ for $i=2,3,\dots,N$; denote the so-obtained TFPL by $f'$;
\item consider the columns of vertices of $G^N$ that contain a vertex, which is incident to a drifter in $f'$: let $\mathcal{I}=\{\ 2\leq i\leq 2N: \,\, a \,\, vertex\,\, of \,\, the \,\, i-th\,\, column\,\ is\,\, incident\,\, to \,\, a \,\, drifter\,\, in\,\, f'\}$, where the columns of $G^N$ 
are counted from left to right. 
\begin{enumerate}
\item If $\mathcal{I}=\{i<j\}$ apply a move in $\{M_1^{-1},M_2^{-1},M_3^{-1}\}$ to the drifter incident to vertices of the $i$-th column and thereafter apply a move in $\{M_1^{-1},M_2^{-1},M_3^{-1}\}$ to the drifter incident to vertices of the $j$-th column;
\item If $\mathcal{I}=\{i\}$ perform a move in $\{M_4^{-1},M_5^{-1}\}$ or if this is not possible apply a move in $\{M_1^{-1},M_2^{-1},M_3^{-1}\}$ to each of the drifters in $f'$ in the following order (if there are two drifters in $f'$): 
if the even cell that contains the lower drifter is not of the form $\mathfrak{o}_{10}$ (see Figure~\ref{Fig:Cells_TFPL_Exc_2}) move the lower drifter first. Otherwise, move the upper drifter first.
\end{enumerate}
\item run through the occurrences of zero in $v^-$: let $\{i_1<i_2<\cdots<i_{N_0}\}=\{i: v_i^-=0\}$. If $v_{i_j+1}$ is the $j$-th zero in $v$ delete the horizontal edge incident to $R_{i_j+1}$ and add a vertical edge incident to $R_{i_j}$ for $j=1,2,\dots, N_0$.
\end{enumerate}
\end{Prop}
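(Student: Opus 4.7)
The plan is to obtain this proposition from Proposition~\ref{Prop:Moves_Exc2_unoriented} via the left--right symmetry of the graph $G^N$ and of the definitions involved: right-Wieland drift is by construction the symmetric counterpart of left-Wieland drift, with even cells playing the role of the active cells, with the drifter effect propagating from left to right rather than from right to left, and with the moves $M_i^{-1}$ being the reverses of the moves $M_i$. Conceptually one reflects $f$ across the vertical axis of $G^N$ to obtain a TFPL $\rho(f)$ in which odd and even cells are exchanged, $\mathcal{L}^N$ and $\mathcal{R}^N$ are exchanged, and column indices are reversed; under this reflection drifters go to drifters, and right-Wieland drift on $f$ corresponds to a suitable left-Wieland drift on $\rho(f)$, with each move $M_i$ being the mirror image of $M_i^{-1}$.

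To make this rigorous I would first establish the analogues, for even cells and right-Wieland drift, of Lemmas~\ref{Lemma:Excluded_cells_exc_2}, \ref{Lemma:Effected_cells_exc_2} and~\ref{Lemma:Wieland_boundary_cells_exc_2}. The excluded-cell lemma follows from the same excess-counting argument applied to the mirror cells; the uniform-action lemma takes the symmetric form asserting that for an even cell $e$ and the odd cell $o$ to its right, with no incident drifter, one has $o' = o$ under right-Wieland drift; and the boundary-cell lemma excludes, as the set of problematic odd cells to the right of $e$, the mirror images of $\{\mathfrak{o}_6,\mathfrak{o}_7,\mathfrak{o}_{12}\}$.

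I would then transcribe the case analysis in the proof of Proposition~\ref{Prop:Moves_Exc2_unoriented} under the reflection. A lone drifter incident to $L_i$ becomes a horizontal edge incident to $L_{i-1}$, the mirror of the $R_i\to R_{i+1}$ rule. For two drifters in columns $i<j$, the left-to-right propagation of right-Wieland drift forces a move to act on the leftmost drifter before the rightmost one, yielding the ordering in part~(2a). For two drifters sharing a column, the case analysis that singled out either a single move from $\{M_4,M_5\}$, or an ordered pair of moves from $\{M_1,M_2,M_3\}$ with a specific priority, transposes under the reflection to a single move from $\{M_4^{-1},M_5^{-1}\}$, or an ordered pair from $\{M_1^{-1},M_2^{-1},M_3^{-1}\}$ with the reversed priority.

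The main obstacle will be to match the labels of odd and even cells carefully under the reflection: in particular, one must verify by inspection of Figure~\ref{Fig:Cells_TFPL_Exc_2} that the cell shape referred to in the statement as $\mathfrak{o}_{10}$ — whose role is to single out the exceptional ordering between the two drifters in a common column — is the correct reflection image of the cell shape $\mathfrak{o}_9$ that played the analogous role in Proposition~\ref{Prop:Moves_Exc2_unoriented}. Since the two drifters lie in a common column and one is above the other, the cells containing them have opposite parities, so the relevant label on the lower drifter should naturally be an even-cell label; verifying this is the only subtle step. Once the identification is made, the rest is routine cell-by-cell bookkeeping, entirely analogous to the tables used in the proof of Proposition~\ref{Prop:Moves_Exc2_unoriented}.
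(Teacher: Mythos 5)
Your proposal matches the paper's own treatment: the paper gives no separate proof of this proposition, stating only that it ``follows from the description of the effect of left-Wieland drift \dots by vertical symmetry,'' which is exactly the reflection argument you spell out. Your extra care about transporting the cell labels under the reflection (in particular the slightly incongruous reference to an \emph{even} cell of the form $\mathfrak{o}_{10}$ in part (2b), which should be read as the mirror image of the $\mathfrak{o}_9$ condition in Proposition~\ref{Prop:Moves_Exc2_unoriented}) is a legitimate and worthwhile elaboration of the same approach, not a departure from it.
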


\section{The path of a drifter under Wieland drift for TFPLs of excess at most $2$}\label{Sec:Path_drifter_exc_2}

The focus of this section is on how many iterations of left-Wieland drift (\textit{resp.} right-Wieland drift) are needed to move a drifter in an instable TFPL of excess at most $2$ to the right 
(\textit{resp.} left) boundary. The results of the previous section facilitate the study of the effect of Wieland drift on a drifter 
in an instable TFPL of excess at most $2$. When looking at the moves that describe the effect of Wieland drift on instable TFPLs of excess at most $2$ 
one immediately sees that in the preimage of the move $M_4$ there is one drifter whereas in its image there two and that in the preimage of the move $M_5$ there are two drifters whereas in its image there 
is one. Thus, in order to pursue a drifter one has to decide which drifter to pursue after applying the move $M_4^{-1}$ \textit{resp.} $M_5$.
Hence, fix a drifter in the image of the move $M_5^{-1}$ that is identified with the drifter in the preimage and fix a drifter in the image of the move $M_4$ that 
is identified with the drifter in the preimage. For all other moves, identify the drifter in the image with the drifter in the preimage.\\

Given an instable TFPL $f$ of excess at most $2$ and a drifter $\mathfrak{d}$ in $f$ there exists a unique non-negative integer $L(\mathfrak{d})$ so that 
$\mathfrak{d}$ is incident to a vertex in $\mathcal{L}^N$ in $\operatorname{WL}^{L(\mathfrak{d})}(f)$ and 
$\mathfrak{d}$ is contained in $\operatorname{WL}^{\ell}(f)$ for each $0\leq \ell\leq L(\mathfrak{d})$ by Proposition~\ref{Prop:Moves_Exc2_unoriented}. On the other hand, there exists a unique non-negative integer 
$R(\mathfrak{d})$ so that $\mathfrak{d}$ is incident to a vertex in $\mathcal{R}^N$ in $\operatorname{WR}^{R(\mathfrak{d})}(f)$ and $\mathfrak{d}$ is contained in $\operatorname{WR}^r(f)$ 
for each $0\leq r\leq R(\mathfrak{d})$ by Proposition~\ref{Prop:Moves_Exc2_unoriented_right}. The main result of this section will be 
an expression for the sum $L(\mathfrak{d})+R(\mathfrak{d})$ in terms of boundary words and will be stated in Corollary~\ref{Cor:Path_drifter_exc_2}.

\begin{Def}[$\operatorname{Path}(\mathfrak{d}),\operatorname{Left}(\mathfrak{d}),\operatorname{Right}(\mathfrak{d}),\operatorname{HeightL}(\mathfrak{d}),\operatorname{HeightR}(\mathfrak{d})$]
Let $f$ be an instable TFPL of excess at most $2$ and $\mathfrak{d}$ be a drifter in $f$. The \textnormal{path} of $\mathfrak{d}$ -- denoted by $\operatorname{Path}(\mathfrak{d})$ -- is defined as 
the sequence of all instable TFPLs that contain $\mathfrak{d}$ and can be reached by an iterated application of left- or right-Wieland drift to $f$. That is 
 \begin{equation}
  \operatorname{Path}(\mathfrak{d})=\left(\operatorname{WR}^{R(\mathfrak{d})}(f),\dots,\operatorname{WR}(f),f,\operatorname{WL}(f),\dots,\operatorname{WL}^{L(\mathfrak{d})}(f)\right).
 \notag
 \end{equation}
Furthermore, $\operatorname{WR}^{R(\mathfrak{d})}(f)$ is denoted by $\operatorname{Right}(\mathfrak{d})$ and $\operatorname{WL}^{L(\mathfrak{d})}(f)$ is denoted by $\operatorname{Left}(\mathfrak{d})$.
The unique positive integer $h$ with $\mathfrak{d}$ incident to $L_{h+1}$ in $\operatorname{Right}(\mathfrak{d})$ is denoted by $\operatorname{HeigthR}(\mathfrak{d})$ and the unique positive integer $h'$ with 
$\mathfrak{d}$ incident to $R_{N-h'}$ in $\operatorname{Left}(\mathfrak{d})$ is denoted by $\operatorname{HeightL}(\mathfrak{d})$.
\end{Def}

By definition it holds $\vert\operatorname{Path}(\mathfrak{d})\vert= L(\mathfrak{d})+R(\mathfrak{d})+1$.
For $i=1,2,3,4,5$ set 
\begin{align}
\#M_i(\mathfrak{d}) & = \vert\{0\leq r \leq R(\mathfrak{d})-1: \text{ by $\operatorname{WR}$ the move 
$M_i^{-1}$ is applied to $\mathfrak{d}$ in $\operatorname{WR}^r(f)$}\}\vert\notag\\ 
& + \vert\{0\leq \ell\leq L(\mathfrak{d})-1: \text{ by $\operatorname{WL}$ the move $M_i$ is applied to $\mathfrak{d}$ in $\operatorname{WL}^\ell(f)$}\}\vert.
\notag\end{align}
Thus, $\vert \operatorname{Path}(\mathfrak{d})\vert=\#M_1(\mathfrak{d})+\#M_2(\mathfrak{d})+\#M_3(\mathfrak{d})+\#M_4(\mathfrak{d})+\#M_5(\mathfrak{d})+1$ and in summary 

\begin{equation}
L(\mathfrak{d})+R(\mathfrak{d})=\#M_1(\mathfrak{d})+\#M_2(\mathfrak{d})+\#M_3(\mathfrak{d})+\#M_4(\mathfrak{d})+\#M_5(\mathfrak{d}).
\notag\end{equation}

\begin{Def}[$R_i(u^{R(\mathfrak{d})})$, $L_i(v^{L(\mathfrak{d})})$] Let $f$ be an instable TFPL with boundary $(u,v;w)$ of excess at most $2$ and $\mathfrak{d}$ a drifter in $f$.
When $u^{R(\mathfrak{d})}$ denotes the left boundary of $\operatorname{WR}^{R(\mathfrak{d})}(f)$ and 
$v^{L(\mathfrak{d})}$ denotes the right boundary of $\operatorname{WL}^{L(\mathfrak{d})}(f)$ then define 
$R_i(u^{R(\mathfrak{d})})$ as the number of occurrences of $i$ among the last $(N-1-\operatorname{HeightR}(\mathfrak{d}))$ letters of $u^{R(\mathfrak{d})}$ 
and $L_i(v^{L(\mathfrak{d})})$ as the number of occurrences of $i$ among the first $(N-1-\operatorname{HeightL}(\mathfrak{d}))$ letters of $v^{L(\mathfrak{d})}$
for $i=0,1$.
\end{Def}

\begin{Prop}\label{Prop:Path_drifter_exc_2}
Let $f$ be an instable TFPL with boundary $(u,v;w)$ where $exc(u,v;w)\leq 2$, $\mathfrak{d}$ a drifter in $f$ and the notations as above. Then
\begin{align}\label{Eq:Path_drifter_exc_2}
\#M_1(\mathfrak{d})+\#M_2(\mathfrak{d})+\#M_3(\mathfrak{d})+\#M_4(\mathfrak{d})+\#M_5(\mathfrak{d})=R_1(u^{R(\mathfrak{d})})+L_0(v^{L(\mathfrak{d})})+1
.\notag\end{align}    
\end{Prop}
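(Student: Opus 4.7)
The plan is to track the $(x,y)$-coordinate of the drifter $\mathfrak{d}$ across $\operatorname{Path}(\mathfrak{d})$ using the move-by-move description of Wieland drift established in Propositions~\ref{Prop:Moves_Exc2_unoriented} and~\ref{Prop:Moves_Exc2_unoriented_right}. Each application of left-Wieland drift performs exactly one of the moves $M_1,\ldots,M_5$ on $\mathfrak{d}$ (where, for $M_4$ and $M_5$, the convention fixed just before the statement of the proposition decides which of the two resulting or pre-existing drifters is identified with $\mathfrak{d}$), and concludes with a one-unit rightward shift of the entire configuration. An analogous picture applies to right-Wieland drift with the inverse moves and a leftward shift.

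First I would inspect Figure~\ref{Fig:Moves_Exc2_unoriented} and record, for each $i\in\{1,\ldots,5\}$, the net displacement vector $(\Delta x_i,\Delta y_i)$ of $\mathfrak{d}$ under one application of Wieland drift that performs $M_i$ on $\mathfrak{d}$, after accounting for the shift. The point to check is that each such application contributes precisely $+1$ to the horizontal coordinate of $\mathfrak{d}$ under left-Wieland drift (and $-1$ under right-Wieland drift), possibly accompanied by a vertical shift of $\pm 1$ or $0$. Granting this, the telescoping sum over the entire path gives
\begin{equation*}
\#M_1(\mathfrak{d})+\#M_2(\mathfrak{d})+\#M_3(\mathfrak{d})+\#M_4(\mathfrak{d})+\#M_5(\mathfrak{d}) = \Delta_x(\mathfrak{d}),
\end{equation*}
where $\Delta_x(\mathfrak{d})$ is the total horizontal distance traversed by $\mathfrak{d}$ from $\operatorname{Right}(\mathfrak{d})$ to $\operatorname{Left}(\mathfrak{d})$.

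Second, I would compute $\Delta_x(\mathfrak{d})$ directly from the endpoints of $\operatorname{Path}(\mathfrak{d})$. In $\operatorname{Right}(\mathfrak{d})$ the drifter is incident to $L_{h+1}$, the leftmost vertex of row $h+1$, where $h=\operatorname{HeightR}(\mathfrak{d})$; in $\operatorname{Left}(\mathfrak{d})$ it is incident to $R_{N-h'}$, the rightmost vertex of row $N-h'$, where $h'=\operatorname{HeightL}(\mathfrak{d})$. Using the fact that in $G^N$ row $i$ contains $2i+1$ vertices, $\Delta_x(\mathfrak{d})$ is a simple function of $h,h',N$ and the vertical displacement of $\mathfrak{d}$ along its path. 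The vertical displacement is in turn read off from the differences between consecutive boundary words in the sequences (\ref{Eq:Sequence_u}) and (\ref{Eq:Sequence_v}), which are controlled by the horizontal-strip structure.

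Third, I would translate $\Delta_x(\mathfrak{d})$ into the quantities $R_1(u^{R(\mathfrak{d})})$ and $L_0(v^{L(\mathfrak{d})})$. The key observation is that, by Definition~\ref{Def:TFPLboundary}, the ones of $u^{R(\mathfrak{d})}$ strictly below row $h+1$ are precisely those indices $i>h+1$ for which $L_i$ has degree one in $\operatorname{Right}(\mathfrak{d})$; such edges geometrically encode the columns that $\mathfrak{d}$ must traverse in order to reach the center of the triangle from $L_{h+1}$ under iterated right-Wieland drift. A symmetric statement identifies $L_0(v^{L(\mathfrak{d})})$ with the number of columns $\mathfrak{d}$ must cross to reach $R_{N-h'}$ from the center. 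Combining these two counts with the unit contribution from the drifter's own position gives $R_1(u^{R(\mathfrak{d})})+L_0(v^{L(\mathfrak{d})})+1$, which matches $\Delta_x(\mathfrak{d})$ by the geometric computation of the second step.

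The main obstacle is a careful case analysis for moves $M_4$ and $M_5$, where $\mathfrak{d}$ is either produced from or merges into another drifter: here the paper's identification convention must be applied consistently so that the horizontal displacement per iteration remains $+1$, and one must verify that no double-counting occurs when the auxiliary drifter in $M_5$ is itself part of some other drifter's path. A secondary technicality is ensuring that the counts $R_1(u^{R(\mathfrak{d})})$ and $L_0(v^{L(\mathfrak{d})})$ are correctly identified with the relevant geometric quantities despite possible $M_4/M_5$ moves at the endpoints of $\operatorname{Path}(\mathfrak{d})$; this will follow by using Proposition~\ref{Thm:StableTFPL} to control the structure of $\operatorname{Left}(\mathfrak{d})$ and $\operatorname{Right}(\mathfrak{d})$ near the drifter's incidence vertex.
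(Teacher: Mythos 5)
There is a genuine gap, and it is in your very first step. You claim that every application of Wieland drift that performs one of $M_1,\dots,M_5$ on $\mathfrak{d}$ advances its horizontal coordinate by exactly one unit, so that the total move count telescopes to the horizontal distance between the endpoints of $\operatorname{Path}(\mathfrak{d})$. If that were true, the left-hand side of the proposition would be a function of $N$, $\operatorname{HeightR}(\mathfrak{d})$ and $\operatorname{HeightL}(\mathfrak{d})$ alone (the endpoints $L_{h+1}$ and $R_{N-h'}$ determine the displacement completely), whereas the right-hand side $R_1(u^{R(\mathfrak{d})})+L_0(v^{L(\mathfrak{d})})+1$ genuinely depends on the boundary words. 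The paper's own example (Figure~\ref{Fig:Path_drifter_example_exc_2}) refutes the claim concretely: there $N=5$, the drifter travels from $L_2$ (column $4$ of $G^5$) to $R_3$ (column $9$), a horizontal displacement of $5$, yet $L(\mathfrak{d})+R(\mathfrak{d})=4$. The underlying reason is visible in Lemma~\ref{Lemma:Moves_enumerated_exc_2}(1)--(2): the $M_1$-type moves advance the drifter along \emph{both} diagonal directions (two columns), while the $M_2$- and $M_3$-type moves advance it along only one (one column). Summing those two identities gives $2\sum_i\#\overrightarrow{M}_{1,i}+\sum_j\#\overrightarrow{M}_{2,j}+\sum_j\#\overrightarrow{M}_{3,j}+\cdots=2N-\operatorname{HeightR}(\mathfrak{d})-\operatorname{HeightL}(\mathfrak{d})$, so the move count differs from the positional data by $\sum_i\#\overrightarrow{M}_{1,i}$, a quantity that is \emph{not} determined by the endpoints.

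Pinning down that missing quantity is exactly the hard part of the paper's proof, and your step three only gestures at it. The paper passes to the canonical orientation and the blue-red path tangle, and proves two further identities (Lemma~\ref{Lemma:Moves_enumerated_exc_2}(3)--(4)) expressing $L_1(v^{L(\mathfrak{d})})$ and $R_0(u^{R(\mathfrak{d})})+1$ as signed counts of the red and blue lattice paths that the down step corresponding to $\mathfrak{d}$ ``jumps over'' along its path; the proposition then follows by taking the linear combination $(1)-(4)$ plus $(2)-(3)$. Your assertion that the ones of $u^{R(\mathfrak{d})}$ below row $h+1$ ``geometrically encode the columns that $\mathfrak{d}$ must traverse'' is precisely the content of these crossing identities, but you give no mechanism for proving it, and in the unoriented picture there is no natural notion of a drifter crossing a path. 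So the proposal is not a proof: the telescoping invariant you chose does not count moves, and the step that would repair it is the one you left unargued.
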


The proof of Proposition~\ref{Prop:Path_drifter_exc_2} will be the content of the rest of this section. The crucial idea is to regard TFPLs together with their canonical orientation. Before starting with the proof 
a crucial corollary of Proposition~\ref{Prop:Path_drifter_exc_2} is stated.

\begin{Cor}\label{Cor:Path_drifter_exc_2} Let $f$ be an instable TFPL with boundary $(u,v;w)$ where $exc(u,v;w)\leq 2$, $\mathfrak{d}$ a drifter in $f$ and the notations as above. Then 
\begin{equation}
 L(\mathfrak{d})+R(\mathfrak{d})=R_1(u^{R(\mathfrak{d})})+L_0(v^{L(\mathfrak{d})})+1.
\label{Eq:Path_drifter_exc_2}\end{equation}
\end{Cor}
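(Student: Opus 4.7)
The plan is straightforward: the Corollary is essentially a direct consequence of Proposition~\ref{Prop:Path_drifter_exc_2}, since the identity
\begin{equation*}
L(\mathfrak{d})+R(\mathfrak{d})=\#M_1(\mathfrak{d})+\#M_2(\mathfrak{d})+\#M_3(\mathfrak{d})+\#M_4(\mathfrak{d})+\#M_5(\mathfrak{d})
\end{equation*}
has already been recorded immediately before the statement. Recall why that identity holds: by definition $|\operatorname{Path}(\mathfrak{d})| = L(\mathfrak{d})+R(\mathfrak{d})+1$, and Propositions~\ref{Prop:Moves_Exc2_unoriented} and~\ref{Prop:Moves_Exc2_unoriented_right} ensure that each of the $L(\mathfrak{d})+R(\mathfrak{d})$ Wieland-drift steps along $\operatorname{Path}(\mathfrak{d})$ applies exactly one of the moves $M_i$ (resp.\ $M_i^{-1}$) to $\mathfrak{d}$. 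Plugging Proposition~\ref{Prop:Path_drifter_exc_2} into this identity yields the desired equality, so the proof of the Corollary itself is one line, and all the content is packed into the Proposition.

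If one wanted to also sketch the Proposition, which is where the real work lives, the natural approach is to examine the canonical orientation along $\operatorname{Path}(\mathfrak{d})$ and to match, step by step, each Wieland-drift step with a single letter of the boundary. Concretely, a $\operatorname{WR}$-step that performs $M_1$, $M_2$ or $M_3$ on $\mathfrak{d}$ should correspond bijectively to a single $1$ of $u^{R(\mathfrak{d})}$ being ``swept past'' $\mathfrak{d}$ as the drifter drifts leftward, while a $\operatorname{WL}$-step that performs $M_1$, $M_2$ or $M_3$ corresponds dually to a single $0$ of $v^{L(\mathfrak{d})}$ being swept past as the drifter drifts rightward. The $+1$ accounts for the terminal step at which $\mathfrak{d}$ is absorbed into $\mathcal{L}^N$ or $\mathcal{R}^N$, at which point one boundary letter immediately adjacent to the drifter is no longer counted.

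The main obstacle in proving the Proposition is the book-keeping around the moves $M_4$ and $M_5$, which locally involve two drifters rather than one. Under the identification convention fixed at the start of Section~\ref{Sec:Path_drifter_exc_2} they each still contribute one unit to $L(\mathfrak{d})+R(\mathfrak{d})$, but one must check case by case that they also sweep exactly one boundary letter of the expected type. Once this is settled, the tally telescopes cleanly at $f$ into a rightward part yielding $R_1(u^{R(\mathfrak{d})})$ and a leftward part yielding $L_0(v^{L(\mathfrak{d})})$, and the Corollary follows by the one-line substitution described above.
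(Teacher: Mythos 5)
Your proposal is correct and matches the paper exactly: the Corollary is obtained by substituting the identity of Proposition~\ref{Prop:Path_drifter_exc_2} into the relation $L(\mathfrak{d})+R(\mathfrak{d})=\#M_1(\mathfrak{d})+\#M_2(\mathfrak{d})+\#M_3(\mathfrak{d})+\#M_4(\mathfrak{d})+\#M_5(\mathfrak{d})$ recorded just before the statement, which is why the paper gives no separate proof. Your additional sketch of the Proposition itself is not needed for this statement (and the paper's actual argument there runs through blue-red path tangles via Lemma~\ref{Lemma:Moves_enumerated_exc_2}), but the Corollary's proof is the one-line substitution you describe.
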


In Figure~\ref{Fig:Path_drifter_example_exc_2} an instable TFPL of excess $2$ and the path of one of its drifters which shall in the following be denoted by $\mathfrak{d}$ are depicted. The drifter $\mathfrak{d}$ 
satisfies $R(\mathfrak{d})=2$, $\operatorname{HeightR}(\mathfrak{d})=1$, $u^{R(\mathfrak{d})}=01101$, $R_1(u^{R(\mathfrak{d})})=2$, $L(\mathfrak{d})=2$, 
$\operatorname{HeightL}(\mathfrak{d})=2$, $v^{L(\mathfrak{d})}=01011$ and $L_0(v^{L(\mathfrak{d})})=1$.

\begin{figure}[tbh]
\begin{center}
 \includegraphics[width=1\textwidth]{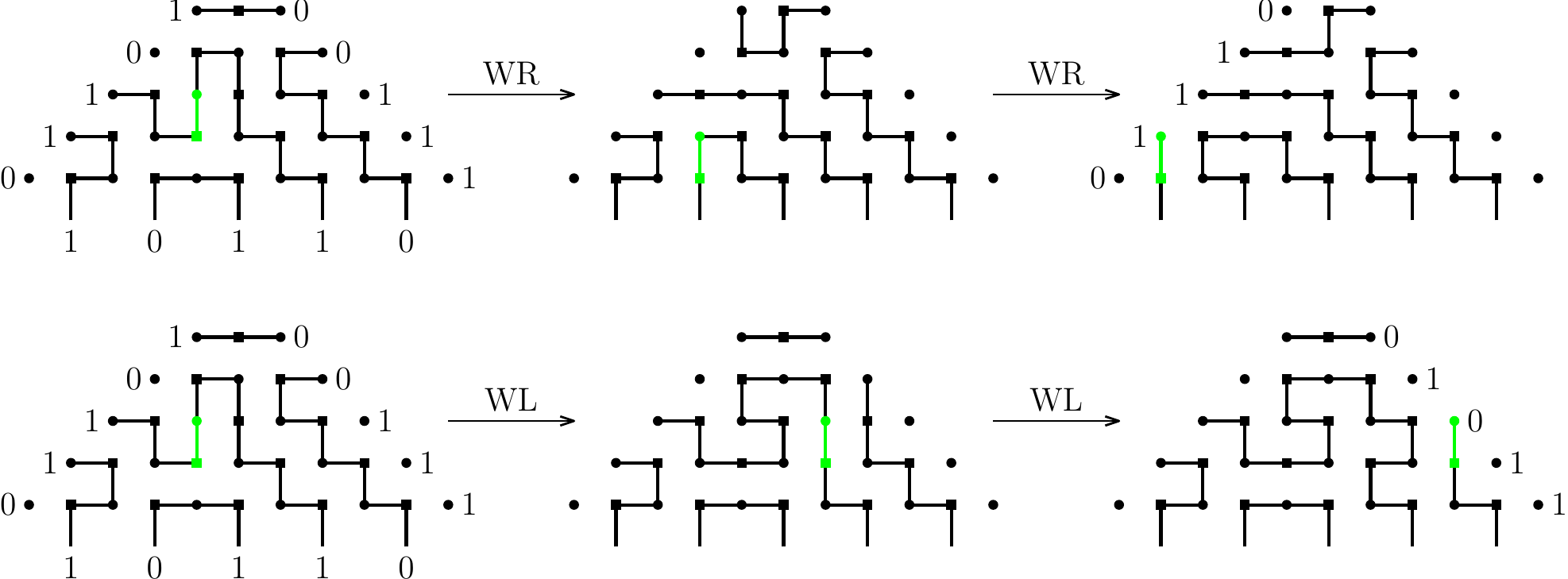}
\caption{A TFPL with boundary $(01101,00111;10110)$ and the path of the drifter that is indicated in green.}
\end{center}
\label{Fig:Path_drifter_example_exc_2}
\end{figure}

The following lemma are immediate consequences of Proposition~\ref{Prop:Comb_Int_Exc}, Proposition~\ref{Prop:Moves_Exc2_unoriented} and Proposition~\ref{Prop:Moves_Exc2_unoriented_right} and describe the effect of 
Wieland drift on canonically oriented TFPLs of excess at most $2$. The moves that form the basis for this description derive from the moves in Figure~\ref{Fig:Moves_Exc2_unoriented} and are depicted in 
Figure~\ref{Fig:Moves_Excess_2_oriented_pathtangle}. In particular, the moves $\overrightarrow{M}_{1,1}$, $\overrightarrow{M}_{1,2}$, $\overrightarrow{M}_{1,3}$, $\overrightarrow{M}_{1,4}$, 
$\overrightarrow{M}_{2,1}$ and $\overrightarrow{M}_{3,1}$ coincide with the moves $BB$, $BR$, $RR$, $RB$, $B$ and $R$ respectively invented in \cite{TFPL} 
for blue-red path tangles corresponding to instable oriented TFPLs of excess $1$.

\begin{figure}[tbh]
\centering
\includegraphics[width=.9\textwidth]{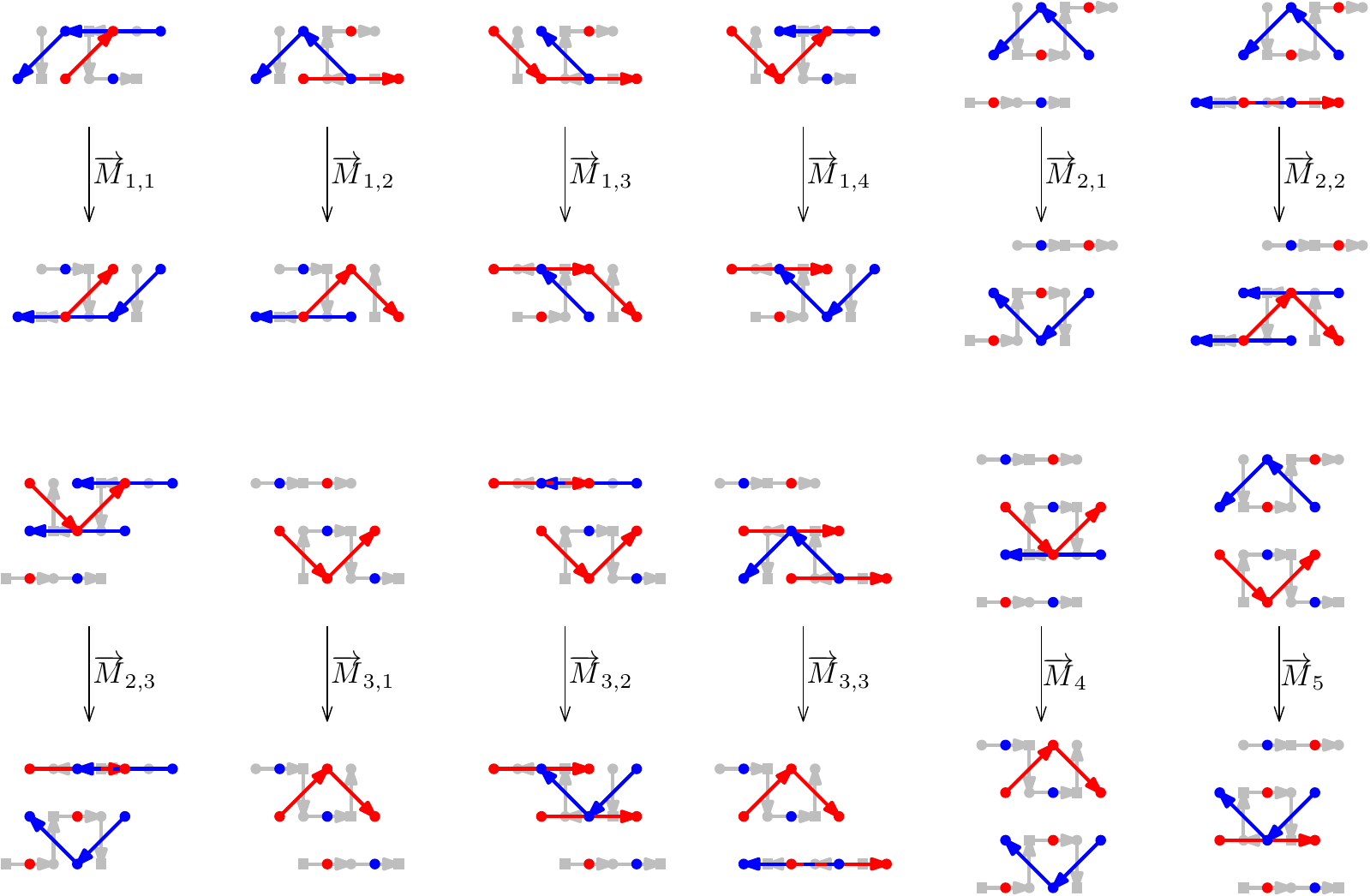}
\caption{The moves describing the effect of Wieland drift on canonically oriented TFPLs of excess at most $2$. In addition, the moves are indicated in terms of blue-red path tangles.}
\label{Fig:Moves_Excess_2_oriented_pathtangle}
\end{figure}

\begin{Lemma}\label{Lem:Wieland_exc_2_oriented} Let $u,v,w$ be words of length $N$ such that $exc(u,v;w)\leq 2$ and $f$ an instable TFPL with boundary $(u,v;w)$ where not all drifters are incident to a vertex in 
$\mathcal{R}^N$. When $\overrightarrow{f}$ denotes $f$ together with the canonical orientation of its edges, then the effect of left-Wieland drift on $f$ translates into the following effect on 
$\overrightarrow{f}$:\begin{enumerate}
      \item If in $\overrightarrow{f}$ there is precisely one drifter 
      then by left-Wieland drift a unique move in\linebreak 
      \hbox{$\{\overrightarrow{M}_{1,1}, \overrightarrow{M}_{1,2}, \overrightarrow{M}_{1,3}, \overrightarrow{M}_{1,4}, 
      \overrightarrow{M}_{2,1}, \overrightarrow{M}_{2,2}, \overrightarrow{M}_{2,3}, \overrightarrow{M}_{3,1}, \overrightarrow{M}_{3,2}, \overrightarrow{M}_{3,3}, \overrightarrow{M}_4\}$} is performed 
      while the rest of $\overrightarrow{f}$ remains unchanged. 
      \item If in $\overrightarrow{f}$ there are two drifters and none of those drifters is incident to a vertex in $\mathcal{R}^N$ 
      then by left-Wieland drift either $\overrightarrow{M}_5$ is performed or to each drifter a unique move in \linebreak
      $\{\overrightarrow{M}_{1,1}, \overrightarrow{M}_{1,2}, \overrightarrow{M}_{1,3}, \overrightarrow{M}_{1,4}, 
      \overrightarrow{M}_{2,1}, \overrightarrow{M}_{3,1}\}$ is applied in the same order as in Proposition~\ref{Prop:Moves_Exc2_unoriented}. The rest of $\overrightarrow{f}$ remains unchanged.
      \item Finally, if in $\overrightarrow{f}$ there are two drifters whereof one is incident to a vertex in $\mathcal{R}^N$ then by left-Wieland drift the drifter incident to a vertex 
      $R_i$ in $\mathcal{R}^N$ is replaced by a horizontal edge 
      incident to $R_{i+1}$ before to the remaining drifter a unique move in $\{\overrightarrow{M}_{1,1}, \overrightarrow{M}_{1,2}, \overrightarrow{M}_{1,3}, \overrightarrow{M}_{1,4}, 
      \overrightarrow{M}_{2,1}, \overrightarrow{M}_{3,1}\}$ is applied. The rest of $\overrightarrow{f}$ remains unchanged by left-Wieland drift.
      \end{enumerate}
 \end{Lemma}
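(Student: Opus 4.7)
The plan is to deduce this lemma from the unoriented descriptions in Proposition~\ref{Prop:Moves_Exc2_unoriented} and Proposition~\ref{Prop:Moves_Exc2_unoriented_right} by tracking what the canonical orientation does locally. First I would observe that the statement decouples into two claims: (a) the underlying unoriented effect of left-Wieland drift is already given by a sequence of moves $M_i$ from Figure~\ref{Fig:Moves_Exc2_unoriented}, so the only remaining question is which orientation each edge carries before and after the move; and (b) the resulting orientation is again the canonical one. Claim (a) is free from the previous proposition, so the real work is (b), enhanced by the case analysis that distinguishes $\overrightarrow{M}_{1,1},\ldots,\overrightarrow{M}_{1,4}$ within $M_1$, $\overrightarrow{M}_{2,1},\overrightarrow{M}_{2,2},\overrightarrow{M}_{2,3}$ within $M_2$, etc.

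Next I would fix a drifter $\mathfrak{d}$ and examine the local window formed by the four even cells $e_t,e_l,e_r,e_b$ and the four odd cells $o_t,o_l,o_r,o_b$ around it as in Figure~\ref{Fig:Surrounding_Cells_Exc_2}. By Proposition~\ref{Prop:Comb_Int_Exc}, the excess of $(u,v;w)$ bounds the number of local configurations of the listed eight types that may appear; together with Lemma~\ref{Lemma:Excluded_cells_exc_2} this gives a short finite list of admissible local pictures. For each such picture I would determine, using the canonical orientation convention (paths between $B_i,B_j$ oriented from the smaller index and closed cycles oriented clockwise), which of the drifter's two endpoints is the head and which is the tail, and likewise for the surrounding horizontal and diagonal edges. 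The translation into the blue-red path tangle language of Section~2.3 (odd-left horizontal midpoints are blue vertices, even-left are red) then turns each oriented $M_i$ into one of the moves $\overrightarrow{M}_{i,j}$ shown in Figure~\ref{Fig:Moves_Excess_2_oriented_pathtangle}, exactly matching the colors inserted by the bijection. This accounts for the named moves and confirms that the moves $\overrightarrow{M}_{1,j}$ with $j=1,2,3,4$, $\overrightarrow{M}_{2,j}$ with $j=1,2,3$, $\overrightarrow{M}_{3,j}$ with $j=1,2,3$, and $\overrightarrow{M}_{4}$ exhaust the possibilities in case (1); cases (2) and (3) then follow by applying this analysis to each of the at most two drifters, in the order dictated by Proposition~\ref{Prop:Moves_Exc2_unoriented}, and observing that when two drifters occupy the same column only the configurations from which $\overrightarrow{M}_5$ can be read off arise.

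For preservation of canonicity after the move, I would argue that Wieland gyration on an individual cell exchanges edges and non-edges but leaves the global connectivity between the boundary vertices $B_i$ unchanged outside the affected region, and that inside the affected region the resulting local orientation is forced by the unique extension of the canonical orientation on the untouched part. Since the canonical orientation is determined uniquely by the underlying TFPL, this forces the oriented image to be precisely $\overrightarrow{M}_{i,j}$ rather than any other locally valid orientation. Only the cases involving closed cycles and the merging/splitting configurations $\overrightarrow{M}_4,\overrightarrow{M}_5$ need a separate check: here I would use that a cycle's orientation is fixed by the clockwise convention, so the two strands crossing at the drifter column reconnect in the unique way consistent with it.

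The main obstacle I anticipate is the combinatorial bookkeeping in case (2c) of Proposition~\ref{Prop:Moves_Exc2_unoriented}, where two drifters in the same column can interact via $\overrightarrow{M}_5$ or via a pair of moves in prescribed order; distinguishing canonical orientations of the local pattern $\mathfrak{o}_9$ versus other odd cells, and verifying that exactly one of $\overrightarrow{M}_5$ or a sequential application occurs in each subcase, will require going through the subtables of Tables~\ref{Table:Drifter_bottom_right_o_b_exc_2} and~\ref{Table:Drifter_o_b_exc_2} one row at a time with the orientation data adjoined. Once this tabular check is complete, the three items of the lemma follow directly by oriented reinterpretation of the corresponding items of Proposition~\ref{Prop:Moves_Exc2_unoriented}.
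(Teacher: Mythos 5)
Your proposal is correct and follows essentially the same route as the paper: the paper presents this lemma as an immediate consequence of Proposition~\ref{Prop:Comb_Int_Exc} and Propositions~\ref{Prop:Moves_Exc2_unoriented}--\ref{Prop:Moves_Exc2_unoriented_right}, i.e.\ one takes the unoriented move description and adjoins the (uniquely determined) canonical orientation, reading off the oriented moves $\overrightarrow{M}_{i,j}$ of Figure~\ref{Fig:Moves_Excess_2_oriented_pathtangle} from the local configurations constrained by the excess bound. Your plan simply makes explicit the orientation bookkeeping that the paper leaves implicit, so no substantive difference or gap arises.
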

 
 \begin{Lemma}\label{Lem:Wieland_exc_2_oriented_right} Let $u,v,w$ be words of length $N$ such that $exc(u,v;w)\leq 2$ and $f$ an instable TFPL with boundary $(u,v;w)$ where not all drifters are incident to a vertex in 
$\mathcal{L}^N$. Then the effect of right-Wieland drift on $f$ translates into the following effect on $\overrightarrow{f}$:
\begin{enumerate}  
      \item If in $\overrightarrow{f}$ there is precisely one drifter then by right-Wieland drift a unique move in \linebreak
      $\{\overrightarrow{M}^{-1}_{1,1}, \overrightarrow{M}^{-1}_{1,2}, \overrightarrow{M}^{-1}_{1,3}, \overrightarrow{M}^{-1}_{1,4}, \overrightarrow{M}^{-1}_{2,1}, 
      \overrightarrow{M}^{-1}_{2,2}, \overrightarrow{M}^{-1}_{2,3}, \overrightarrow{M}^{-1}_{3,1}, \overrightarrow{M}^{-1}_{3,2}, \overrightarrow{M}^{-1}_{3,3},  
      \overrightarrow{M}^{-1}_5\}$ is performed while the rest of $\overrightarrow{f}$ remains unchanged.
      \item If in $\overrightarrow{f}$ there are two drifters and none of those drifters is incident to a vertex in $\mathcal{L}^N$ then by right-Wieland drift either $\overrightarrow{M}^{-1}_4$ is performed 
      or to each drifter in $\overrightarrow{f}$ a unique move in \linebreak
      $\{\overrightarrow{M}^{-1}_{1,1}, \overrightarrow{M}^{-1}_{1,2}, 
      \overrightarrow{M}^{-1}_{1,3}, \overrightarrow{M}^{-1}_{1,4}, \overrightarrow{M}^{-1}_{2,1}, \overrightarrow{M}^{-1}_{3,1}\}$ 
      is applied in the same order as in Proposition~\ref{Prop:Moves_Exc2_unoriented_right}. The rest of $\overrightarrow{f}$ remains unchanged. 
      \item Finally, if in $\overrightarrow{f}$ there are two drifters whereof one is incident to a vertex in $\mathcal{L}^N$ then by right-Wieland drift 
      the drifter incident to a vertex $L_j$ in $\mathcal{L}^N$ is replaced by a horizontal edge incident to $L_{j-1}$ before a unique move in  
      $\{\overrightarrow{M}^{-1}_{1,1}, \overrightarrow{M}^{-1}_{1,2}, \overrightarrow{M}^{-1}_{1,3}, \overrightarrow{M}^{-1}_{1,4}, \overrightarrow{M}^{-1}_{2,1}, \overrightarrow{M}^{-1}_{3,1}\}$ 
      is applied to the remaining drifter. The rest of $\overrightarrow{f}$ remains unchanged by right-Wieland drift.
      \end{enumerate}      
\end{Lemma}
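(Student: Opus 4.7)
The plan is to lift the unoriented description of right-Wieland drift from Proposition~\ref{Prop:Moves_Exc2_unoriented_right} to the canonically oriented setting by refining each unoriented move $M_i^{-1}$ according to the canonical orientation of the edges around each drifter. The key tool will be Proposition~\ref{Prop:Comb_Int_Exc}, which expresses the excess as a sum of numbers of local configurations in a canonically oriented TFPL; since each drifter contributes to this sum, an excess of at most $2$ severely limits both the number of drifters in $f$ and the admissible canonical orientations of the edges surrounding a drifter.

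Concretely, I would first add the canonical orientation to $f$ to obtain $\overrightarrow{f}$, and then for each unoriented move $M_i^{-1}$ in Figure~\ref{Fig:Moves_Exc2_unoriented} enumerate the finitely many orientations of the cells involved that are consistent with the canonical orientation of the rest of $\overrightarrow{f}$ and with the bound on the excess. The resulting list should be exactly the collection of oriented moves displayed in Figure~\ref{Fig:Moves_Excess_2_oriented_pathtangle}: for an isolated drifter this yields $\overrightarrow{M}_{1,1}^{-1},\overrightarrow{M}_{1,2}^{-1},\overrightarrow{M}_{1,3}^{-1},\overrightarrow{M}_{1,4}^{-1},\overrightarrow{M}_{2,1}^{-1},\overrightarrow{M}_{2,2}^{-1},\overrightarrow{M}_{2,3}^{-1},\overrightarrow{M}_{3,1}^{-1},\overrightarrow{M}_{3,2}^{-1},\overrightarrow{M}_{3,3}^{-1}$, while the two-drifter moves $\overrightarrow{M}_4^{-1}$ and $\overrightarrow{M}_5^{-1}$ should come from the unique canonical orientations of the preimages of $M_4^{-1}$ and $M_5^{-1}$ compatible with the excess bound. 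I would then check that, once the corresponding unoriented move has been applied, the canonical orientation of the resulting TFPL agrees with the orientations prescribed in Figure~\ref{Fig:Moves_Excess_2_oriented_pathtangle}, which should reduce to tracking, along each affected path, to which vertex in $\mathcal{B}^N$ it is connected.

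For parts~(2) and~(3), the order in which the two drifters are moved is already fixed by Proposition~\ref{Prop:Moves_Exc2_unoriented_right}, so the remaining task reduces to the same orientation analysis carried out for isolated drifters; the restriction to the smaller set $\{\overrightarrow{M}_{1,1}^{-1},\overrightarrow{M}_{1,2}^{-1},\overrightarrow{M}_{1,3}^{-1},\overrightarrow{M}_{1,4}^{-1},\overrightarrow{M}_{2,1}^{-1},\overrightarrow{M}_{3,1}^{-1}\}$ in case~(2) should reflect that the presence of the second drifter already uses up some of the excess budget, ruling out the higher-indexed oriented variants. In case~(3), the drifter incident to $L_j\in\mathcal{L}^N$ is forced by Definition~\ref{Def:Oriented_TFPL} to carry a fixed orientation, so its removal and replacement by a horizontal edge incident to $L_{j-1}$ is determined, and the analysis of the remaining drifter proceeds as before.

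The main obstacle will be the purely combinatorial bookkeeping across the finitely many oriented sub-cases, analogous to the table-based case analysis already performed in Section~\ref{Sec:Wieland_drift_TFPL_Excess_2}. A more conceptual shortcut would be to invoke the vertical symmetry between left- and right-Wieland drift: since Lemma~\ref{Lem:Wieland_exc_2_oriented} establishes the analogous description for left-Wieland drift and, by Proposition~\ref{Thm:WielandBijectiveTFPL}, right-Wieland drift inverts left-Wieland drift along one boundary, the present lemma should follow from Lemma~\ref{Lem:Wieland_exc_2_oriented} by passing from each oriented move $\overrightarrow{M}_{i,j}$ to its inverse $\overrightarrow{M}_{i,j}^{-1}$, which is presumably why the author calls it an immediate consequence.
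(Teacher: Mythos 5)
Your proposal is correct and matches the paper's approach: the paper offers no separate proof of this lemma, declaring it an immediate consequence of Proposition~\ref{Prop:Comb_Int_Exc}, Proposition~\ref{Prop:Moves_Exc2_unoriented} and Proposition~\ref{Prop:Moves_Exc2_unoriented_right}, which is precisely the argument you spell out (refine each unoriented move by the canonical orientations permitted by the excess bound). Your suggested shortcut of inverting the oriented moves of Lemma~\ref{Lem:Wieland_exc_2_oriented} via Proposition~\ref{Thm:WielandBijectiveTFPL} is likewise consistent with how the paper derives the right-drift statements from the left-drift ones by symmetry.
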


In the following, for $(i,j)=(1,1),(1,2),(1,3),(1,4),(2,1),(2,2),(2,3),(3,1),(3,2),(3,3)$ set 

\begin{align}
\#\overrightarrow{M}_{i,j}(\mathfrak{d})& = \vert\{0\leq r \leq R(\mathfrak{d})-1: \text{ by $\operatorname{WR}$ the move 
$\overrightarrow{M}_{i,j}^{-1}$ is applied to $\mathfrak{d}$ in $\operatorname{WR}^r(\overrightarrow{f})$}\}\vert\notag\\ 
& + \vert\{0\leq \ell\leq L(\mathfrak{d})-1: \text{ by $\operatorname{WL}$ the move $\overrightarrow{M}_{i,j}$ is applied to $\mathfrak{d}$ in $\operatorname{WL}^\ell(\overrightarrow{f})$}\}\vert.
\notag\end{align}
For the moves $\overrightarrow{M}_4^{-1}$ and $\overrightarrow{M}_5$ it has to be distinguished which of the two drifters in the respective image is identified with the one in the preimage. 
Hence, for $i=4,5$ set 
\begin{align}
\#\overrightarrow{M}_i & = \vert\{0\leq r \leq R(\mathfrak{d})-1: \text{ by $\operatorname{WR}$ the move 
$\overrightarrow{M}_i^{-1}$ is applied to $\mathfrak{d}$ in $\operatorname{WR}^r(\overrightarrow{f})$}\}\vert\notag\\ 
& + \vert\{0\leq \ell\leq L(\mathfrak{d})-1: \text{ by $\operatorname{WL}$ the move $\overrightarrow{M}_i$ is applied to $\mathfrak{d}$ in $\operatorname{WL}^\ell(\overrightarrow{f})$}\}\vert.  
\notag\end{align}
and indicate by a $t$ or $b$ whether in the respective image the \textit{t}op or the \textit{b}ottom drifter is identified with the drifter in the respective preimage. 
In that way, one obtains the notation $\#\overrightarrow{M}_i^b$ respectively $\#\overrightarrow{M}_i^t$ for $i=4,5$.

\begin{Lemma}\label{Lemma:Moves_enumerated_exc_2}
Let $f$ be an instable TFPL with boundary $(u,v;w)$ where $exc(u,v;w)\leq 2$ and $\mathfrak{d}$ a drifter in $f$. When $\overrightarrow{f}$ denotes $f$ together with the canonical orientation of its edges, then the 
following hold:
\begin{enumerate}
 \item $\sum\limits_{i=1}^4\#\overrightarrow{M}_{1,i}(\mathfrak{d})+\sum\limits_{j=1}^3 \#\overrightarrow{M}_{3,j}(\mathfrak{d})+\#\overrightarrow{M}_4^t(\mathfrak{d})+\#\overrightarrow{M}_5^b(\mathfrak{d})=
  N-\operatorname{HeightR}(\mathfrak{d})$,
 \item $\sum\limits_{i=1}^4\#\overrightarrow{M}_{1,i}(\mathfrak{d})+\sum\limits_{j=1}^3 \#\overrightarrow{M}_{2,j}(\mathfrak{d})+\#\overrightarrow{M}_4^b(\mathfrak{d})+\#\overrightarrow{M}_5^t(\mathfrak{d})=
 N-\operatorname{HeightL}(\mathfrak{d})$,
 \item $\#\overrightarrow{M}_{1,1}(\mathfrak{d})+\#\overrightarrow{M}_{1,4}(\mathfrak{d})
 +\#\overrightarrow{M}_{2,3}(\mathfrak{d})-\#\overrightarrow{M}_{3,3}(\mathfrak{d})+\#\overrightarrow{M}_{4}^b(\mathfrak{d})=L_1(v^{L(\mathfrak{d})})$,
 \item $\#\overrightarrow{M}_{1,2}(\mathfrak{d})+\#\overrightarrow{M}_{1,3}(\mathfrak{d})
 -\#\overrightarrow{M}_{2,3}(\mathfrak{d})+\#\overrightarrow{M}_{3,3}(\mathfrak{d})-\#\overrightarrow{M}_{4}^b(\mathfrak{d})=R_0(u^{R(\mathfrak{d})})+1$.
\end{enumerate}
\end{Lemma}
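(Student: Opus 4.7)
The plan is to track, for each TFPL along $\operatorname{Path}(\mathfrak{d})$, the column coordinates of the two endpoints of the drifter $\mathfrak{d}$ together with an auxiliary boundary statistic read off from the blue--red path tangle representation of the canonically oriented TFPL. All four equations will then follow from a telescoping argument combined with a case-by-case analysis of the effect of the moves listed in Figure~\ref{Fig:Moves_Excess_2_oriented_pathtangle} on these statistics.

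First, I would go through each of the moves $\overrightarrow{M}_{1,i}$, $\overrightarrow{M}_{2,j}$, $\overrightarrow{M}_{3,k}$, $\overrightarrow{M}_4$, and $\overrightarrow{M}_5$ individually, tabulating how the column coordinates of the top and bottom endpoint of $\mathfrak{d}$ change under left-Wieland drift (and, dually, under right-Wieland drift, using Proposition~\ref{Prop:Moves_Exc2_unoriented_right} and Lemma~\ref{Lem:Wieland_exc_2_oriented_right}). Inspection of the figure should reveal that the moves appearing on the left-hand side of (1) are exactly those that advance the column of the top endpoint of $\mathfrak{d}$ by one while the bottom endpoint is tracked consistently, and analogously the moves in (2) advance the bottom endpoint. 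For the merging/splitting moves $\overrightarrow{M}_4$ and $\overrightarrow{M}_5$, the superscripts $t$ and $b$ distinguish whether $\mathfrak{d}$ is identified with the top or bottom drifter in the image, which determines which endpoint moves. Summing these per-move shifts over the whole path gives the total horizontal displacement of each endpoint. Since $\mathfrak{d}$ is incident to $L_{\operatorname{HeightR}(\mathfrak{d})+1}$ in $\operatorname{Right}(\mathfrak{d})$ and to $R_{N-\operatorname{HeightL}(\mathfrak{d})}$ in $\operatorname{Left}(\mathfrak{d})$, a direct computation in the geometry of $G^N$ shows that these displacements equal $N - \operatorname{HeightR}(\mathfrak{d})$ and $N - \operatorname{HeightL}(\mathfrak{d})$ respectively, giving (1) and (2).

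For equations (3) and (4), I would introduce a second statistic counting, in the blue--red path tangle associated with each canonically oriented TFPL in $\operatorname{Path}(\mathfrak{d})$, the number of red-path endpoints on the right boundary below $\mathfrak{d}$ (for~(3)) and blue-path endpoints on the left boundary above $\mathfrak{d}$ (for~(4)). For each move one determines the change in this counter. The plan is to verify that the moves $\overrightarrow{M}_{1,1}$, $\overrightarrow{M}_{1,4}$, $\overrightarrow{M}_{2,3}$, $\overrightarrow{M}_4^{b}$ increase the right-boundary counter by one while $\overrightarrow{M}_{3,3}$ decreases it by one, and that all other moves preserve it. Telescoping then equates the signed count along $\operatorname{Path}(\mathfrak{d})$ with the difference between the final value $L_1(v^{L(\mathfrak{d})})$ at $\operatorname{Left}(\mathfrak{d})$ and the initial value at $\operatorname{Right}(\mathfrak{d})$; equation (4) is derived analogously, with the additive $+1$ accounting for the unique letter of $u^{R(\mathfrak{d})}$ at position $\operatorname{HeightR}(\mathfrak{d})+1$ where $\mathfrak{d}$ is incident to the left boundary in $\operatorname{Right}(\mathfrak{d})$.

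The main obstacle will be verifying the signed contributions in (3) and (4), since this requires carefully identifying which moves locally reverse the direction of a blue or red segment relative to the drifter. In particular, the moves $\overrightarrow{M}_{2,3}$ and $\overrightarrow{M}_{3,3}$ correspond to turns in the blue--red path structure where the direction of the boundary contribution flips, and the correct sign assignment demands a careful inspection of each move's local diagram as a blue--red path modification; the pair $\overrightarrow{M}_{4}^{t},\overrightarrow{M}_{4}^{b}$ is delicate for the same reason, since the contribution depends on which of the two newly created drifters is identified with $\mathfrak{d}$. Once this case analysis is carried out completely, the four equations follow uniformly by the telescoping argument described above.
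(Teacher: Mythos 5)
Your proposal follows essentially the same route as the paper: both pass to the blue--red path tangle of the canonically oriented TFPLs along $\operatorname{Path}(\mathfrak{d})$, tabulate the effect of each move in Figure~\ref{Fig:Moves_Excess_2_oriented_pathtangle} on a positional statistic of the down step corresponding to $\mathfrak{d}$ for (1)--(2) and on a count of red (resp.\ blue) paths ``jumped over'' for (3)--(4), and then telescope using the boundary values at $\operatorname{Right}(\mathfrak{d})$ and $\operatorname{Left}(\mathfrak{d})$, with your sign assignments for $\overrightarrow{M}_{1,1}$, $\overrightarrow{M}_{1,4}$, $\overrightarrow{M}_{2,3}$, $\overrightarrow{M}_{3,3}$, $\overrightarrow{M}_{4}^{b}$ matching the paper's. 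The only cosmetic difference is that the paper tracks the $\backslash$-diagonal of red vertices (for (1)) and the $/$-diagonal of blue vertices (for (2)) containing the center of the down step rather than column coordinates of its endpoints; these diagonal indices are the coordinates in which the per-move increments are uniformly $0$ or $1$, so you would be led to the same bookkeeping once the tabulation is carried out.
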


The identities in Lemma~\ref{Lemma:Moves_enumerated_exc_2} generalize the identities of Proposition~6.11 and of Proposition~6.12
in \cite{TFPL} for $\#BB$, $\#BR$, $\#RR$, $\#RB$, $\#B$ and $\#R$. The proof of Lemma~\ref{Lemma:Moves_enumerated_exc_2} is given in terms of blue-red path tangles and uses analogous arguments as the proofs of 
Proposition~6.11 and Proposition~6.12 in \cite{TFPL}. 

\begin{proof} 
As to the first identity, observe that $\overrightarrow{M}_{1,1}$, $\overrightarrow{M}_{1,2}$, $\overrightarrow{M}_{1,3}$, $\overrightarrow{M}_{1,4}$, $\overrightarrow{M}_{3,1}$,
$\overrightarrow{M}_{3,2}$, $\overrightarrow{M}_{3,3}$, $\overrightarrow{M}_4^t$ and $\overrightarrow{M}_5^b$ are the moves that shift the center of the down step in the blue-red path tangle 
corresponding to $\mathfrak{d}$ from one $\backslash$-diagonal of red vertices to the next on the right, 
while the center of the down step corresponding to $\mathfrak{d}$ stays on the same
$\backslash$-diagonal if one of the other moves is performed. Now, the first identity follows since
the center of the blue down step corresponding to $\mathfrak{d}$ in the blue-red path tangle corresponding to $\operatorname{Right}(\mathfrak{d})$ lies on the 
$\operatorname{HeightR}(\mathfrak{d})$-th $\backslash$-diagonal of red vertices when counted from the left whereas the center of the red down step corresponding to $\mathfrak{d}$ in the blue-red path tangle 
corresponding to 
$\operatorname{Left}(\mathfrak{d})$ lies on the $N$-th $\backslash$-diagonal of red vertices.

The second identity follows from the first by symmetry.

For the third identity, note that in the process of moving the down step corresponding to $\mathfrak{d}$ in $\operatorname{Right}(\mathfrak{d})$ to the right boundary by repeatedly applying left-Wieland drift 
it has to ``jump over'' a number of red paths. These are precisely the red paths in the blue-red path tangle corresponding to 
$\operatorname{Left}(\mathfrak{d})$ that end above the red path on which the down step corresponding to $\mathfrak{d}$ lies. Since endpoints of red paths are encoded by ones, the number of these red paths is given by
$L_1(v^{L(\mathfrak{d})})$. Regarding the left-hand side of the third identity, note that a red paths is overcome by the down step corresponding to $\mathfrak{d}$ either by the move 
$\overrightarrow{M}_{1,1}$ or by one of the moves $\overrightarrow{M}_{1,4}$, $\overrightarrow{M}_{2,3}$, $\overrightarrow{M}_{4}^b$ where it is transformed from a red into a blue down step that lies in the area
below the red path.
On the other hand, by the move $\overrightarrow{M}_{3,3}$ the blue down step is transformed into a red down step that lies in the area above the blue path. For that reason, $\#\overrightarrow{M}_{3,3}(\mathfrak{d})$ has 
to be subtracted on the left-hand side of the third identity.

The last identity follows from the third by symmetry.
\end{proof}

\begin{proof}[Proof of Proposition~\ref{Prop:Path_drifter_exc_2}]
By subtracting (4) from (1) in Lemma~\ref{Lemma:Moves_enumerated_exc_2} one obtains
\begin{equation}
R_1(u^{R(\mathfrak{d})})=\#\overrightarrow{M}_{1,1}(\mathfrak{d})+\#\overrightarrow{M}_{1,4}(\mathfrak{d})+\#\overrightarrow{M}_{2,3}(\mathfrak{d})+\#\overrightarrow{M}_{3,1}(\mathfrak{d})
+\#\overrightarrow{M}_{3,2}(\mathfrak{d})+\#\overrightarrow{M}_{4}^b(\mathfrak{d})+\#\overrightarrow{M}_{4}^t(\mathfrak{d})+\#\overrightarrow{M}_{5}^b(\mathfrak{d}).\notag
\end{equation}
On the other hand, by subtracting (3) from (2) in Lemma~\ref{Lemma:Moves_enumerated_exc_2} one obtains
\begin{equation}
L_0(v^{L(\mathfrak{d})})+1=\#\overrightarrow{M}_{1,2}(\mathfrak{d})+\#\overrightarrow{M}_{1,3}(\mathfrak{d})+\#\overrightarrow{M}_{2,1}(\mathfrak{d})+\#\overrightarrow{M}_{2,2}(\mathfrak{d})+
\#\overrightarrow{M}_{3,3}(\mathfrak{d})+\#\overrightarrow{M}_{5}^t(\mathfrak{d}).
\notag\end{equation} 
Summing these two identities gives the assertion.
\end{proof}

\section{Proof of Theorem~\ref{Thm:Expressing_in_stable_TFPLs_exc_2}}\label{Sec:Proof_of_Thm_Expressing_in_stable_TFPLs_exc_2}

In this section, a bijective proof of Theorem~\ref{Thm:Expressing_in_stable_TFPLs_exc_2} is given. By Proposition~\ref{Prop:Path_drifter_exc_2}, precisely one of the inequalities 
\begin{center}
\fbox{$L(\mathfrak{d})\leq L_0(v^{L(\mathfrak{d})})$ and $R(\mathfrak{d})\leq R_1(u^{R(\mathfrak{d})})$} 
\end{center}
is satisfied for each drifter $\mathfrak{d}$ in an instable TFPL $f$ of excess $2$. 
Depending on which of the two inequalities $\mathfrak{d}$ satisfies it is moved to the left or to the right boundary where it then is deleted:
let $f$ be a TFPL with boundary $(u,v;w)$ where $\operatorname{exc}(u,v;w)=2$. A triple $(S(f),g(f),T(f))$ consisting of a semi-standard Young tableau $S(f)$ of skew shape 
$\lambda(u^+)/\lambda(u)$, a stable TFPL $g(f)$ with boundary $(u^+,v^+;w)$ and a semi-standard Young tableau $T(f)$ of skew shape $\lambda(v^+)/\lambda(v)$ is associated with $f$ as follows:
\begin{enumerate}
 \item If $f$ is stable, then set $g(f)=f$, $S(f)$ the empty semi-standard Young tableau of skew shape $\lambda(u)/\lambda(u)$ and $T(f)$ the empty semi-standard Young tableau of skew shape $\lambda(v)'/\lambda(v)'$.
 \item If in $f$ for each drifter $\mathfrak{d}$ it holds $R(\mathfrak{d})\leq R_1(u^{R(\mathfrak{d})})$, then 
 $g(f)=\operatorname{Right}(f)=\operatorname{WR}^{R(f)+1}(f)$, where 
 the boundary of $\operatorname{Right}(f)$ is $(u^+,v;w)$ for a $u^+$ such that $\lambda(u)\subset\lambda(u^+)$, $S(f)$ is the semi-standard Young tableau of skew shape $\lambda(u^+)/\lambda(u)$ corresponding 
 to the sequence
 \begin{equation}
 \lambda(u)=\lambda(u^{0})\subseteq\lambda(u^{1})\subseteq\cdots\subseteq\lambda(u^{R(f)})\subseteq\lambda(u^{R(f)+1})=\lambda(u^+),
 \notag\end{equation}
where $u^{r}$ denotes the left boundary word of $\operatorname{WR}^{r}(f)$ for each $0\leq r\leq R(f)+1$, and $T(f)$ is the empty semi-standard Young tableau of skew shape $\lambda(v)'/\lambda(v)'$.

\item If in $f$ for each drifter $\mathfrak{d}$ it holds $L(\mathfrak{d})\leq L_0(v^{L(\mathfrak{d})})$, 
then set $g(f)=\operatorname{Left}(f)=\operatorname{WL}^{L(f)+1}(f)$, 
where the boundary of $\operatorname{Left}(f)$ is $(u,v^+;w)$ for a $v^+$ such that $\lambda(v)\subset\lambda(v^+)$, $S(f)$ is the empty semi-standard Young tableau of skew shape $\lambda(u)/\lambda(u)$ and $T(f)$ is the 
semi-standard Young tableau of skew shape $\lambda(v^+)'/\lambda(v)'$ corresponding to the sequence
\begin{equation}
\lambda(v)'=\lambda(v^{0})'\subseteq\lambda(v^{1})'\subseteq\cdots\subseteq\lambda(v^{L(f)})'\subseteq\lambda(v^{L(f)+1})'=\lambda(v^+)',
\notag\end{equation}
where $v^{\ell}$ denotes the right boundary word of $\operatorname{WL}^{\ell}(f)$ for each $0\leq \ell\leq L(f)+1$. 

\item If in $f$ there are two drifters $\mathfrak{d}_{\mathfrak{r}}$ and $\mathfrak{d}_{\mathfrak{l}}$ such that $R(\mathfrak{d}_{\mathfrak{r}})\leq R_1(u^{R(\mathfrak{d}_{\mathfrak{r}})})$
and $L(\mathfrak{d}_{\mathfrak{l}})\leq L_0(v^{L(\mathfrak{d}_{\mathfrak{l}})})$, then $g(f)$ is the TFPL with boundary $(u^+,v^+;w)$ for a $u^+$ such that $\lambda(u)\subset\lambda(u^+)$ and a $v^+$ such that 
$\lambda(v)\subset\lambda(v^+)$ 
obtained from $f$ as follows: the drifter $\mathfrak{d}_\mathfrak{r}$ is moved to the left boundary 
using the moves $M_1^{-1}$, $M_2^{-1}$, $M_3^{-1}$ and there replaced by a horizontal edge and the drifter $\mathfrak{d}_\mathfrak{l}$ is moved to the right boundary using the moves $M_1$, $M_2$, $M_3$ 
and there replaced by a horizontal edge. Furthermore, $S(f)$ is the semi-standard Young tableau of skew shape $\lambda(u^+)/\lambda(u)$ with entry $R(\mathfrak{d}_{\mathfrak{r}})+1$
and $T(f)$ is the semi-standard Young tableau of skew shape $\lambda(v^+)'/\lambda(v)'$ with entry $L(\mathfrak{d}_{\mathfrak{l}})+1$. 
\end{enumerate}

In Figure~\ref{Fig:Example_bijection_exc_2}, the TFPL of excess $2$ displayed in Figure~\ref{Fig:Eventually_Stable_Example_exc_2} and the triple $(S,g,T)$ associated with it are depicted.  

\begin{figure}[tbh]
\includegraphics[width=.9\textwidth]{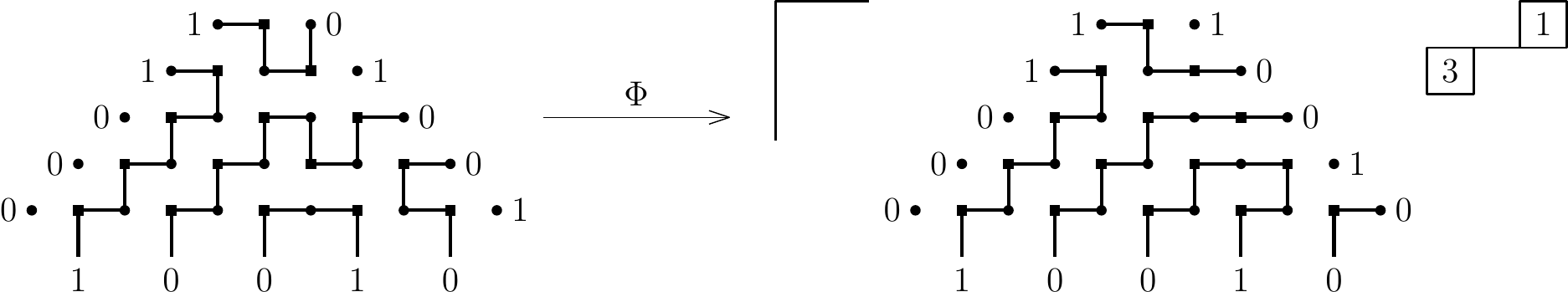}
\caption{An instable TFPL with boundary $(01101,00111;10110)$ and the triple $(S,g,T)$ it is associated with.}
\label{Fig:Example_bijection_exc_2}
\end{figure}
In the following, denote by $G_{\lambda,\lambda^+}$ the set of semi-standard Young tableaux of skew shape $\lambda^+/\lambda$ with entries in the $i$-th column, if counted from right, 
restricted to $1,2,\dots,i$ for Young diagrams $\lambda\subseteq\lambda^+$ which may have empty columns or rows.

\begin{Thm}\label{Thm:Bijection_Exc_2_Stable}
Let $u,v,w$ be words of the same length and with the same number of occurrences of one such that $exc(u,v;w)= 2$. 
Then the map 
\begin{align}
\Phi: T_{u,v}^w & \longrightarrow \bigcup\limits_{u^+, v^+: u^+\geq u, v^+\geq v} G_{\lambda(u),\lambda(u^+)}\times S_{u^+,v^+}^w \times G_{\lambda(v)',\lambda(v^+)'}\notag\\
f & \longmapsto (S(f),g(f),T(f))
\notag\end{align}
is a bijection.
\end{Thm}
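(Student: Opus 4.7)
The strategy is to verify that $\Phi$ is well-defined and then construct an explicit inverse $\Psi$.

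For well-definedness, the four cases in the definition of $\Phi$ form a partition of $T_{u,v}^w$: by Corollary~\ref{Cor:Path_drifter_exc_2}, each drifter $\mathfrak{d}$ in an instable TFPL of excess at most $2$ satisfies $L(\mathfrak{d})+R(\mathfrak{d}) = L_0(v^{L(\mathfrak{d})}) + R_1(u^{R(\mathfrak{d})}) + 1$, so exactly one of $L(\mathfrak{d}) \leq L_0(v^{L(\mathfrak{d})})$ and $R(\mathfrak{d}) \leq R_1(u^{R(\mathfrak{d})})$ holds, and an instable excess-$2$ TFPL carries at most two drifters (Proposition~\ref{Prop:Comb_Int_Exc}). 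In each case $g(f)$ is stable by construction, and the semi-standard tableau property of $S(f)$ and $T(f)$ follows from the horizontal-strip property of boundary-word changes under $\WR$ and $\WL$. The column restriction defining $G_{\lambda(u), \lambda(u^+)}$---that the entry in the $i$-th column from the right is at most $i$---translates directly into the inequality $R(\mathfrak{d})+1 \leq R_1(u^{R(\mathfrak{d})})+1$ selecting case~2, and symmetrically for $T(f)$ in case~3.

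The inverse $\Psi$ is constructed as follows. Given $(S,g,T)$ with $g$ stable of boundary $(u^+,v^+;w)$, the tableaux $S$ and $T$ encode chains
\begin{equation}
\lambda(u) = \lambda(u^0) \subseteq \cdots \subseteq \lambda(u^k) = \lambda(u^+), \quad \lambda(v)' = \lambda(v^0)' \subseteq \cdots \subseteq \lambda(v^\ell)' = \lambda(v^+)'.
\notag\end{equation}
Set
\begin{equation}
\Psi(S,g,T) = \WR_{v^0} \circ \WR_{v^1} \circ \cdots \circ \WR_{v^{\ell-1}} \circ \WL_{u^0} \circ \cdots \circ \WL_{u^{k-1}}(g).
\notag\end{equation}
In the mixed case $k, \ell \geq 1$, the two compositions commute, since the inverse moves $M_i^{-1}$ and the moves $M_i$ ($i=1,2,3$) used to re-insert the two drifters act on disjoint columns of $G^N$: the two drifters travel in opposite directions from their insertion points, and no moves of type $M_4, M_5$ can arise during reconstruction.

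That $\Psi \circ \Phi = \operatorname{id}$ is immediate from Proposition~\ref{Thm:WielandBijectiveTFPL}, which says $\WL_u \circ \WR_v$ and $\WR_v \circ \WL_u$ restrict to the identity on TFPLs with matching boundary. The main obstacle is the converse $\Phi \circ \Psi = \operatorname{id}$, which requires checking that $\Psi(S,g,T)$ is an instable TFPL falling into the intended case of $\Phi$ and that iterated Wieland drift from it recovers $(S,g,T)$. This reduces, by running Propositions~\ref{Prop:Moves_Exc2_unoriented} and~\ref{Prop:Moves_Exc2_unoriented_right} in reverse on $g$ cell by cell, to showing that exactly one drifter is introduced at each step of the $\WL_{u^r}$ (resp.\ $\WR_{v^s}$) sequence and that its trajectory coincides with the one prescribed by $S$ (resp.\ $T$). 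The column restriction on $G_{\lambda(u),\lambda(u^+)}$ prevents the drifter from being ``trapped'' or prematurely exiting under subsequent forward Wieland drift, while the absence of moves $M_4, M_5$ in case~$4$ guarantees that the two drifter histories never interact, and the result follows.
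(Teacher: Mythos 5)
Your well-definedness argument and your treatment of the cases where only one of $u^+>u$, $v^+>v$ occurs match the paper: there $\Psi$ is indeed the composition $\WL_{u^{0}}\circ\cdots\circ\WL_{u^{R}}$ (resp. $\WR_{v^{0}}\circ\cdots\circ\WR_{v^{L}}$) applied to $g$, and Proposition~\ref{Thm:WielandBijectiveTFPL} gives the inversion. The genuine gap is in the mixed case $u^+>u$ and $v^+>v$. Your formula
$\Psi(S,g,T)=\WR_{v^0}\circ\cdots\circ\WR_{v^{\ell-1}}\circ\WL_{u^0}\circ\cdots\circ\WL_{u^{k-1}}(g)$
cannot work, and the claimed ``commutation on disjoint columns'' is false: by Propositions~\ref{Prop:Moves_Exc2_unoriented} and~\ref{Prop:Moves_Exc2_unoriented_right}, a single application of left- (resp.\ right-) Wieland drift applies a move to \emph{every} drifter present in the configuration, not only to the one you are currently re-inserting. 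After the $\WL$-block has placed the first drifter $\mathfrak{d}_{\mathfrak{r}}$ at its correct interior position, each subsequent $\WR_{v^{j}}$ drags $\mathfrak{d}_{\mathfrak{r}}$ one step back toward the left boundary while transporting $\mathfrak{d}_{\mathfrak{l}}$; in general $\mathfrak{d}_{\mathfrak{r}}$ is then absorbed into the left boundary (raising it above $u$) or ends at the wrong position, so the output is not $f$. For a concrete failure take $R(\mathfrak{d}_{\mathfrak{r}})=0$: the first $\WR_{v^{\ell-1}}$ already annihilates $\mathfrak{d}_{\mathfrak{r}}$ at the left boundary.

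The paper circumvents exactly this obstruction by \emph{not} using global Wieland drift in the mixed case: it inserts the two drifters by hand at the prescribed boundary positions and transports them individually and in opposite directions using the local moves $M_1,M_2,M_3$ and $M_1^{-1},M_2^{-1},M_3^{-1}$ the prescribed numbers of times. This in turn requires Proposition~\ref{Prop:One_drifter_left_one_right_exc_2} -- proved via the blockade analysis of Lemmas~\ref{Lemma:Blockades_exc_2} and~\ref{Lemma:Distance_blockades_exc_2} -- to guarantee that neither individual transport gets blocked; your remark that the column restriction on $G_{\lambda(u),\lambda(u^+)}$ ``prevents trapping'' gestures at this but does not supply the argument. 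To repair your proof you must replace the composed-drift formula in the mixed case by the local-move construction (or prove an analogue of Proposition~\ref{Prop:One_drifter_left_one_right_exc_2} and define the forward map $\Phi$ compatibly), since no composition of global $\WL$'s and $\WR$'s can move two coexisting drifters in opposite directions.
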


\begin{Cor}
The assertion of Theorem~\ref{Thm:Expressing_in_stable_TFPLs_exc_2} immediately follows from Theorem~\ref{Thm:Bijection_Exc_2_Stable}.
\end{Cor}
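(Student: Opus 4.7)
The plan is to deduce Theorem~\ref{Thm:Expressing_in_stable_TFPLs_exc_2} directly from the bijection $\Phi$ of Theorem~\ref{Thm:Bijection_Exc_2_Stable} by taking cardinalities. The bijection immediately yields
\[
t_{u,v}^w=\sum_{u^+\ge u,\;v^+\ge v}\bigl|G_{\lambda(u),\lambda(u^+)}\bigr|\,s_{u^+,v^+}^w\,\bigl|G_{\lambda(v)',\lambda(v^+)'}\bigr|.
\]
The summation in Theorem~\ref{Thm:Expressing_in_stable_TFPLs_exc_2} further restricts to $\operatorname{exc}(u^+,v^+;w)\ge 0$, but this is automatic: the combined number of cells in the two skew shapes $\lambda(u^+)/\lambda(u)$ and $\lambda(v^+)'/\lambda(v)'$ equals $d(u^+)+d(v^+)-d(u)-d(v)=2-\operatorname{exc}(u^+,v^+;w)$, so whenever the excess is negative this combined size exceeds the budget of $2$ cells coming from $\operatorname{exc}(u,v;w)=2$ and at least one of the $G$-sets is empty.

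It then remains to establish $\bigl|G_{\lambda(u),\lambda(u^+)}\bigr|=g_{u,u^+}$ and $\bigl|G_{\lambda(v)',\lambda(v^+)'}\bigr|=g_{v^*,(v^+)^*}$. I would obtain the first identity by a case analysis on the skew shape $\lambda(u^+)/\lambda(u)$, whose five possible geometric types, namely empty shape, a single cell, two cells in distinct rows and columns, two cells in the same column, and two cells in the same row, correspond exactly to the five clauses defining $g_{\sigma,\sigma^+}$ via the elementary equivalences $\sigma^+=\sigma$, $\sigma_L01\sigma_R\to\sigma_L10\sigma_R$, $\sigma_L01\sigma_M01\sigma_R\to\sigma_L10\sigma_M10\sigma_R$, $\sigma_L001\sigma_R\to\sigma_L100\sigma_R$, and $\sigma_L011\sigma_R\to\sigma_L110\sigma_R$. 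In each case the column index from the right in $\lambda(u^+)$ of every skew cell is read directly off from $|\sigma_M|_1$ and $|\sigma_R|_1$, after which the admissible SSYT fillings are enumerated cell by cell using the SSYT row/column monotonicity together with the column restriction, and the resulting counts match the piecewise formula for $g$.

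The second identity reduces to the first via the relation $\lambda(\omega^*)=\lambda(\omega)'$ between the complement-reverse involution on $01$-words and Young-diagram conjugation. This follows immediately from the path description of $\lambda(\cdot)$: complementation swaps $(0,1)$- and $(1,0)$-steps while reversal inverts the orientation of the path, the net effect being reflection across the antidiagonal, which is exactly partition conjugation. Applying it to $\omega=v$ and $\omega=v^+$ rewrites $\bigl|G_{\lambda(v)',\lambda(v^+)'}\bigr|$ as $\bigl|G_{\lambda(v^*),\lambda((v^+)^*)}\bigr|$, which equals $g_{v^*,(v^+)^*}$ by the first identity applied to the pair $(v^*,(v^+)^*)$.

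The main obstacle is the case analysis underlying the first identity, and in particular the case of two skew cells in distinct rows and columns. While the empty, single-cell, same-column and same-row cases each reduce to an immediate count of short weak or strict chains under the column restriction, the mixed case requires careful tracking of how the column restriction defining $G_{\lambda,\lambda^+}$ interacts with the column-strict SSYT condition propagated through the $\lambda$-cells separating the two skew cells; this interaction is what produces the non-obvious product $(|\sigma_R|_1+1)(|\sigma_R|_1+|\sigma_M|_1+1)$ rather than the naive product of the two column-restriction sizes.
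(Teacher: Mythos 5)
Your overall strategy is certainly the intended one: the paper gives no argument for this corollary beyond the word ``immediately'', and deducing Theorem~\ref{Thm:Expressing_in_stable_TFPLs_exc_2} from Theorem~\ref{Thm:Bijection_Exc_2_Stable} amounts to taking cardinalities of the bijection and checking $\vert G_{\lambda(u),\lambda(u^+)}\vert=g_{u,u^+}$ together with $\lambda(\omega^\ast)=\lambda(\omega)'$; your reduction of the $v$-identity to the $u$-identity via conjugation is correct. One minor flaw first: your reason for discarding the terms with $\operatorname{exc}(u^+,v^+;w)<0$ is not right --- a set $G_{\lambda,\lambda^+}$ is not forced to be empty just because the two skew shapes carry more than two cells in total. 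Those terms vanish because $s_{u^+,v^+}^w=0$ (no TFPL, stable or not, has a boundary triple of negative excess), so the factor $s_{u^+,v^+}^w$ already kills them.

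The serious problem is exactly the case you yourself flag as the main obstacle, and your proposed resolution does not work. For two skew cells lying in distinct rows and distinct columns the semistandard conditions impose no relation whatsoever between their entries: there is no ``column-strict condition propagated through the $\lambda$-cells separating the two skew cells'', since skew SSYT conditions only compare cells of the skew shape that share a row or a column. Hence the count is the plain product of the two column bounds. Computing the columns as you describe (the column of a cell, counted from the right, is one plus the number of ones to the right of the moving one), the cell coming from the right factor $01\mapsto 10$ sits in column $\vert\sigma_R\vert_1+1$, but the cell coming from the left factor sits in column $\vert\sigma_M\vert_1+\vert\sigma_R\vert_1+2$, because the ones of $\sigma_M$ \emph{and} the other moving one both lie to its right. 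The product is therefore $(\vert\sigma_R\vert_1+1)(\vert\sigma_M\vert_1+\vert\sigma_R\vert_1+2)$, not $g_{\sigma,\sigma^+}=(\vert\sigma_R\vert_1+1)(\vert\sigma_M\vert_1+\vert\sigma_R\vert_1+1)$. Concretely, for $\sigma=0101$ and $\sigma^+=1010$ the skew shape $\lambda(1010)/\lambda(0101)$ consists of two cells in distinct rows and columns with column indices $2$ and $1$ from the right, so $\vert G_{\lambda(0101),\lambda(1010)}\vert=2$ while $g_{0101,1010}=1$. (The other cases do check out: the empty and single-cell cases are immediate, the same-column case gives $\binom{\vert\sigma_R\vert_1+1}{2}$ and the same-row case gives $\binom{\vert\sigma_R\vert_1+2}{2}$, both matching $g$.) So either an additional constraint coupling the two entries --- presumably the one hidden in Lemma~\ref{Lemma:Distance_blockades_exc_2}, which ties the travel times of the two drifters to one another --- must be built into the count, or the literal definition of $G_{\lambda,\lambda^+}$ is not the correct target set; in either case the identity $\vert G_{\lambda(u),\lambda(u^+)}\vert=g_{u,u^+}$ on which your whole derivation rests is not established, and as you have set it up it fails, precisely in the two-cells-in-distinct-rows-and-columns case.
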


\begin{Prop}\label{Prop:One_drifter_left_one_right_exc_2}
Let $f$ be an instable TFPL of excess $2$ that contains two drifters $\mathfrak{d}_{\mathfrak{r}}$ and $\mathfrak{d}_{\mathfrak{l}}$ such that $R(\mathfrak{d}_{\mathfrak{r}})\leq R_1(u^{R(\mathfrak{d}_{\mathfrak{r}})})$
and $L(\mathfrak{d}_{\mathfrak{l}})\leq L_0(v^{L(\mathfrak{d}_{\mathfrak{l}})})$. Then $\mathfrak{d}_\mathfrak{r}$ can be moved to the left boundary by the moves $M_1^{-1}$, $M_2^{-1}$ and $M_3^{-1}$ and
$\mathfrak{d}_\mathfrak{l}$ can be moved to the right boundary by the moves $M_1$, $M_2$ and $M_3$.
\end{Prop}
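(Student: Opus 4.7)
The plan is to verify that iterated right-Wieland drift applied to $f$ transports $\mathfrak{d}_\mathfrak{r}$ to $\mathcal{L}^N$ while applying only the elementary moves $M_1^{-1}, M_2^{-1}, M_3^{-1}$ to it; the claim for $\mathfrak{d}_\mathfrak{l}$ then follows by the symmetric argument with left-Wieland drift in place of right-Wieland drift. The core task is to exclude the two interaction moves $M_4^{-1}$ and $M_5^{-1}$ throughout the process.

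First I would extract strict opposite inequalities from the hypotheses. By Corollary~\ref{Cor:Path_drifter_exc_2} applied to each drifter separately,
\[
L(\mathfrak{d}_\mathfrak{r}) \geq L_0(v^{L(\mathfrak{d}_\mathfrak{r})}) + 1 \qquad\text{and}\qquad R(\mathfrak{d}_\mathfrak{l}) \geq R_1(u^{R(\mathfrak{d}_\mathfrak{l})}) + 1,
\]
so the two drifters genuinely prefer opposite boundaries. The splitting move $M_5^{-1}$ is excluded immediately: applying it to one drifter in a TFPL that already carries two drifters would create a third, violating the bound on the number of drifters in a TFPL of excess $2$ provided by Proposition~\ref{Prop:Comb_Int_Exc}.

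To rule out $M_4^{-1}$ I would argue by contradiction. Suppose $M_4^{-1}$ were triggered at some iteration $r_0 \leq R(\mathfrak{d}_\mathfrak{r})$; then $\mathfrak{d}_\mathfrak{r}$ and $\mathfrak{d}_\mathfrak{l}$ must occupy the cells of a single column in the pattern displayed in Figure~\ref{Fig:Moves_Exc2_unoriented}, and merge into one drifter thereafter. Under the tracking conventions fixed just before Proposition~\ref{Prop:Path_drifter_exc_2}, both preimage drifters are then continued by the single merged drifter, which is itself subsequently absorbed at a unique vertex of $\mathcal{L}^N$. This forces the equalities $R(\mathfrak{d}_\mathfrak{r}) = R(\mathfrak{d}_\mathfrak{l})$ and $\operatorname{HeightR}(\mathfrak{d}_\mathfrak{r}) = \operatorname{HeightR}(\mathfrak{d}_\mathfrak{l})$, whence also $R_1(u^{R(\mathfrak{d}_\mathfrak{r})}) = R_1(u^{R(\mathfrak{d}_\mathfrak{l})})$. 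Combining with the hypothesis $R(\mathfrak{d}_\mathfrak{r}) \leq R_1(u^{R(\mathfrak{d}_\mathfrak{r})})$ and the strict inequality $R(\mathfrak{d}_\mathfrak{l}) > R_1(u^{R(\mathfrak{d}_\mathfrak{l})})$ derived above yields an immediate contradiction.

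The main obstacle is the bookkeeping required to justify the identifications $R(\mathfrak{d}_\mathfrak{r}) = R(\mathfrak{d}_\mathfrak{l})$ and $\operatorname{HeightR}(\mathfrak{d}_\mathfrak{r}) = \operatorname{HeightR}(\mathfrak{d}_\mathfrak{l})$ after a hypothetical $M_4^{-1}$ merge; this rests on the cell-level descriptions of Proposition~\ref{Prop:Moves_Exc2_unoriented_right} and the tables governing which move is applied in which local configuration in Section~\ref{Sec:Wieland_drift_TFPL_Excess_2}. Once $M_4^{-1}$ and $M_5^{-1}$ are excluded, each iteration $0, 1, \ldots, R(\mathfrak{d}_\mathfrak{r}) - 1$ of right-Wieland drift acts on $\mathfrak{d}_\mathfrak{r}$ via one of $M_1^{-1}, M_2^{-1}, M_3^{-1}$, and the sequence of these moves transports $\mathfrak{d}_\mathfrak{r}$ to $\mathcal{L}^N$; the symmetric conclusion for $\mathfrak{d}_\mathfrak{l}$ under left-Wieland drift is word-for-word the same after swapping the roles of $\operatorname{WL}, \operatorname{WR}$ and of $u, v^\ast$.
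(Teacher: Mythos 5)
There is a genuine gap: you are analysing the wrong process. Your argument tracks $\mathfrak{d}_\mathfrak{r}$ under \emph{global} iterated right-Wieland drift, under which \emph{both} drifters travel towards $\mathcal{L}^N$ and are eventually absorbed there. The proposition, however, is needed for case (4) of the map $\Phi$ in Theorem~\ref{Thm:Bijection_Exc_2_Stable} (and case (3) of its inverse $\Psi$), where $\mathfrak{d}_\mathfrak{r}$ must be walked to the \emph{left} boundary by individual applications of $M_1^{-1},M_2^{-1},M_3^{-1}$ while $\mathfrak{d}_\mathfrak{l}$ stays put and is then walked to the \emph{right} boundary by $M_1,M_2,M_3$. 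The danger in that process is not that a merging or splitting move gets triggered; it is that at some intermediate stage \emph{none} of the three elementary moves is applicable to the drifter being transported, because the other drifter sits in its way. Lemma~\ref{Lemma:Blockades_exc_2} classifies exactly these obstructions (the ``blockades''), and Lemma~\ref{Lemma:Distance_blockades_exc_2} shows that any blockade forces $L(\mathfrak{d})-L(\mathfrak{d}^\ast)=L_0(v^{L(\mathfrak{d})})-L_0(v^{L(\mathfrak{d}^\ast)})+1$, which together with the hypotheses $R(\mathfrak{d}_\mathfrak{r})\leq R_1(u^{R(\mathfrak{d}_\mathfrak{r})})$ and $L(\mathfrak{d}_\mathfrak{l})\leq L_0(v^{L(\mathfrak{d}_\mathfrak{l})})$ and Corollary~\ref{Cor:Path_drifter_exc_2} yields a contradiction. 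That quantitative step is the entire content of the paper's proof and is absent from yours. Showing that the global drift only ever applies $M_1^{-1},M_2^{-1},M_3^{-1}$ to $\mathfrak{d}_\mathfrak{r}$ does not transfer to the situation where $\mathfrak{d}_\mathfrak{l}$ is held fixed or pushed in the opposite direction, since the applicability of a move is determined by the local cells around the drifter and these differ in the two processes precisely when the drifters come close to one another --- which is the only case in which anything can go wrong.

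The parts you do prove are correct but peripheral. The strict reverse inequalities for the other drifter follow from Corollary~\ref{Cor:Path_drifter_exc_2} exactly as you say, and the exclusion of $M_5^{-1}$ (three drifters would violate Proposition~\ref{Prop:Comb_Int_Exc}) and of $M_4^{-1}$ (a merge forces $R(\mathfrak{d}_\mathfrak{r})=R(\mathfrak{d}_\mathfrak{l})$ and equal heights, contradicting the opposite strict inequalities) is essentially the remark the paper places immediately after the statement of the proposition, whose purpose is only to guarantee that the two drifters fall into the ``opposite boundaries'' case of the bijection. To close the gap you would need to (i) show that if a drifter cannot be advanced by any of $M_1,M_2,M_3$ (respectively their inverses) then the two drifters form one of finitely many local blockade patterns, and (ii) establish the displacement identity for each blockade --- in the paper this is done by passing to the canonical orientation, translating to blue-red path tangles, and evaluating the quantities $\operatorname{D}_{NW}$, $\operatorname{D}_{NE}$, $\operatorname{Red}$, $\operatorname{Blue}$ case by case.
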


Note that in a TFPL of excess at most $2$ that contains the preimage of the move $M_5$ (\textit{resp.} $M_4^{-1}$) for both drifters $\mathfrak{d}$ and $\mathfrak{d}^\ast$ holds that 
$L(\mathfrak{d})=L(\mathfrak{d}^\ast)$ (\textit{resp.} $R(\mathfrak{d})=R(\mathfrak{d}^\ast)$). Therefore, $\mathfrak{d}$ and $\mathfrak{d}^\ast$ have to satisfy the preconditions of either (1) or (2).
The proof of Proposition~\ref{Prop:One_drifter_left_one_right_exc_2} is based on the following two lemma. 

\begin{Lemma}\label{Lemma:Blockades_exc_2}
Let $f$ be an instable TFPL of excess $2$ that contains two drifters $\mathfrak{d}$ and $\mathfrak{d}^\ast$ whereof at least $\mathfrak{d}$ is not incident to a vertex in $\mathcal{R}^N$ and that does not contain the 
preimage of the move $M_5$. 
If none of the moves $M_1$, $M_2$ or $M_3$ can be applied to $\mathfrak{d}$, then $f$ exhibits one of the following \textnormal{blockades}:
\begin{center}
\includegraphics[width=1\textwidth]{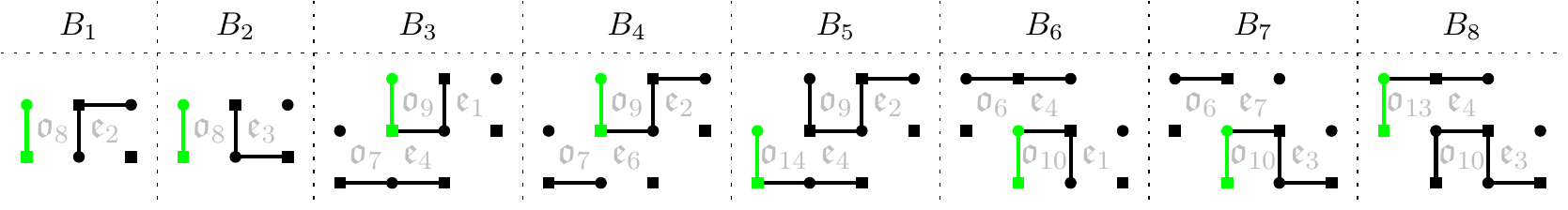} 
\end{center}
\end{Lemma}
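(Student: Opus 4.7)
The plan is to carry out a systematic case analysis on the odd cell $o_r$ to the upper right of the drifter $\mathfrak{d}$. Since $\mathfrak{d}$ is not incident to a vertex in $\mathcal{R}^N$, all four surrounding cells $o_t, o_l, o_b, o_r$ of $\mathfrak{d}$ (as labeled in Figure~\ref{Fig:Surrounding_Cells_Exc_2}) exist, and in particular $o_r$ is a well-defined internal cell of $f$. Because the left edge of $o_r$ is precisely the drifter $\mathfrak{d}$, the cell $o_r$ must be one of the odd cells of Figure~\ref{Fig:Cells_TFPL_Exc_2} whose left edge is a drifter. Combined with Lemma~\ref{Lemma:Excluded_cells_exc_2}, this restricts $o_r$ to the finite list $\{\mathfrak{o}_8, \mathfrak{o}_9, \mathfrak{o}_{10}, \mathfrak{o}_{11}, \mathfrak{o}_{13}, \mathfrak{o}_{14}\}$.

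I would then split into two categories. First, if $o_r \in \{\mathfrak{o}_{11}, \mathfrak{o}_{13}, \mathfrak{o}_{14}\}$, the cell-by-cell analysis in the proof of Proposition~\ref{Prop:Moves_Exc2_unoriented}(1) shows that $\mathfrak{o}_{11}$ triggers the move $M_4$ (producing an extra drifter) rather than any of $M_1, M_2, M_3$, while the two-drifter cells $\mathfrak{o}_{13}$ and $\mathfrak{o}_{14}$ do not lie in the preimage of $M_1, M_2$, or $M_3$ at all. Hence in this category $o_r$ itself already realises one of the listed blockades. Second, if $o_r \in \{\mathfrak{o}_8, \mathfrak{o}_9, \mathfrak{o}_{10}\}$, the same proof shows that the corresponding move $M_1, M_2$ or $M_3$ would apply to $\mathfrak{d}$ in isolation, so any obstruction has to come from the second drifter $\mathfrak{d}^\ast$ occupying one of the neighbouring cells $o_b$, $e_r$, $e_b$, or being incident to a vertex of $o_r$.

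The detailed plan for this second category is to revisit the sub-cases already tabulated in the proof of Proposition~\ref{Prop:Moves_Exc2_unoriented}(2c), namely the configurations in which $\mathfrak{d}^\ast$ is incident to the bottom right vertex of $o_b$, lies in $e_r$, lies in $e_b$, or lies in $o_b$. In each of those tables the entries where left-Wieland drift still performs an individual move in $\{M_1, M_2, M_3\}$ on $\mathfrak{d}$ are precisely those which are \emph{not} ruled out by the hypothesis that $f$ does not contain the preimage of $M_5$. Removing those entries isolates exactly the configurations in which $\mathfrak{d}^\ast$ blocks the movement of $\mathfrak{d}$, and direct inspection of what remains matches it with a picture in the statement.

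The main obstacle is the bookkeeping: for each of the four relative positions of $\mathfrak{d}^\ast$, and for each of the admissible pairs $(o_r, o_r^\ast)$, one must verify that the only configurations surviving the exclusion of the $M_5$-preimage are the ones depicted as blockades, and that conversely each depicted blockade does arise. The assumption that $\mathfrak{d}$ is not incident to $\mathcal{R}^N$ is used only to guarantee the existence of $o_r$, and the $M_5$-exclusion is precisely what separates the blockade configurations from the cases already handled by Proposition~\ref{Prop:Moves_Exc2_unoriented}(2c).
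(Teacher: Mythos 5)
Your overall strategy---a case analysis on the odd cell $o_r$ containing $\mathfrak{d}$, followed by an inspection of the neighbouring cells occupied or influenced by $\mathfrak{d}^\ast$---is the same as the paper's, but there is one concrete gap: the treatment of $o_r=\mathfrak{o}_{11}$. The paper's candidate list is $\{\mathfrak{o}_8,\mathfrak{o}_9,\mathfrak{o}_{10},\mathfrak{o}_{13},\mathfrak{o}_{14}\}$; the cell $\mathfrak{o}_{11}$ is excluded at the outset because, by Proposition~\ref{Prop:Comb_Int_Exc}, a TFPL of excess $2$ cannot contain a cell of type $\mathfrak{o}_{11}$ \emph{and} a second drifter simultaneously (under the canonical orientation such a cell already accounts for two of the local configurations counted by the excess; this is the same observation made explicit in the proof of Proposition~\ref{Prop:Moves_Exc2_unoriented}(2b)). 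You instead keep $\mathfrak{o}_{11}$ as a live case and assert that it ``already realises one of the listed blockades.'' It does not: the preimage of $M_4$ is a one-drifter configuration that is not among the depicted blockades, and it could not be allowed there, since the downstream argument (Lemma~\ref{Lemma:Distance_blockades_exc_2} together with Table~\ref{Table:Blockades_exc_2}) evaluates each blockade as a configuration in which the two drifters $\mathfrak{d}$ and $\mathfrak{d}^\ast$ both persist under Wieland drift, which fails once $M_4$ spawns an additional drifter. As written, your case split therefore only shows that $f$ exhibits a listed blockade \emph{or} contains a cell $\mathfrak{o}_{11}$, and the missing ingredient is precisely the excess-counting argument that eliminates the second alternative.

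The rest of your plan is sound and essentially the paper's: for $o_r\in\{\mathfrak{o}_8,\mathfrak{o}_9,\mathfrak{o}_{10}\}$ the paper lists the admissible even cells $e_r$ to the right of $o_r$, discards the single value of $e_r$ for which the corresponding move does apply (for instance $e_r=\mathfrak{e}_5$ when $o_r=\mathfrak{o}_8$, which is exactly the preimage of $M_1$), and reads off the surviving pairs $(o_r,e_r)$ as blockades. That is a slightly more direct bookkeeping than re-filtering the tables from the proof of Proposition~\ref{Prop:Moves_Exc2_unoriented}(2c)---note also that in those tables the relevant entries are the ones where the move on $\mathfrak{d}$ is performed only \emph{after} $\mathfrak{d}^\ast$ has been moved, not the ones you propose to remove---but it amounts to the same finite check.
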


\begin{proof}
Since $\mathfrak{d}$ is not incident to a vertex in $\mathcal{R}^N$ it is contained in an odd cell $o$ of $f$. Furthermore, 
$o\in\{\mathfrak{o}_{8},\mathfrak{o}_{9},\mathfrak{o}_{10},\mathfrak{o}_{13},\mathfrak{o}_{14}\}$ because $f$ cannot contain the odd cell $\mathfrak{o}_{11}$ and two drifters at the same time by 
Proposition~\ref{Prop:Comb_Int_Exc}. Here, only the case when 
$o=\mathfrak{o}_8$ is considered. In that case, for the even cell $e$ to the right of $o$ it holds $e\in\{\mathfrak{e}_{2},\mathfrak{e}_{3},\mathfrak{e}_{5}\}$. The case $e=\mathfrak{e}_5$ is impossible since by 
assumption the move $M_1$ cannot be applied to $\mathfrak{d}$. Thus, $e=\mathfrak{e}_2$ or $e=\mathfrak{e}_3$ which give rise to the blockades $B_1$ and $B_2$ respectively. 
\end{proof}

\begin{Lemma}\label{Lemma:Distance_blockades_exc_2}
Let the assumptions be the same as in Lemma~\ref{Lemma:Blockades_exc_2}. Then 
\begin{equation}
L(\mathfrak{d})-L(\mathfrak{d}^\ast)=L_0(v^{L(\mathfrak{d})})-L_0(v^{L(\mathfrak{d}^\ast)})+1. 
\notag\end{equation}
\end{Lemma}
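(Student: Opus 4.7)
The plan is to combine Proposition~\ref{Prop:Path_drifter_exc_2} applied to each of the two drifters with a case analysis of the blockades from Lemma~\ref{Lemma:Blockades_exc_2}. Applying Proposition~\ref{Prop:Path_drifter_exc_2} to $\mathfrak{d}$ and $\mathfrak{d}^\ast$ respectively yields
\begin{align*}
L(\mathfrak{d}) + R(\mathfrak{d}) &= R_1(u^{R(\mathfrak{d})}) + L_0(v^{L(\mathfrak{d})}) + 1,\\
L(\mathfrak{d}^\ast) + R(\mathfrak{d}^\ast) &= R_1(u^{R(\mathfrak{d}^\ast)}) + L_0(v^{L(\mathfrak{d}^\ast)}) + 1.
\end{align*}
Subtracting the second from the first and rearranging reduces the claim to the equivalent identity
\begin{equation*}
R(\mathfrak{d}^\ast) - R(\mathfrak{d}) + R_1(u^{R(\mathfrak{d})}) - R_1(u^{R(\mathfrak{d}^\ast)}) = 1,
\end{equation*}
which only involves the right-Wieland drift trajectories of the two drifters.

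The remaining task is to verify this reduced identity by a case-by-case analysis of the blockades $B_1,B_2,\ldots$ from Lemma~\ref{Lemma:Blockades_exc_2}. In each blockade, the two drifters occupy specific adjacent positions, with $\mathfrak{d}^\ast$ lying one column to the right of $\mathfrak{d}$ (as is already visible from the proof sketch of Lemma~\ref{Lemma:Blockades_exc_2}, for instance in the case $o=\mathfrak{o}_8$, $e\in\{\mathfrak{e}_2,\mathfrak{e}_3\}$). Since right-Wieland drift moves interior drifters toward $\mathcal{L}^N$ by Proposition~\ref{Prop:Moves_Exc2_unoriented_right} and $\mathfrak{d}$ lies closer to $\mathcal{L}^N$ than $\mathfrak{d}^\ast$ in the blockade, the drifter $\mathfrak{d}$ is absorbed at $\mathcal{L}^N$ at step $R(\mathfrak{d})$, while $\mathfrak{d}^\ast$ remains interior and reaches $\mathcal{L}^N$ a few steps later at step $R(\mathfrak{d}^\ast)$. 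Applying Proposition~\ref{Prop:Moves_Exc2_unoriented_right} iteratively, I would trace the moves applied to each drifter and determine precisely the heights $\operatorname{HeightR}(\mathfrak{d})$, $\operatorname{HeightR}(\mathfrak{d}^\ast)$ and the left boundary words $u^{R(\mathfrak{d})}$, $u^{R(\mathfrak{d}^\ast)}$. In every blockade, the contributions to the reduced identity add up to exactly $1$: typically either $R(\mathfrak{d}^\ast)=R(\mathfrak{d})+1$ with $R_1(u^{R(\mathfrak{d})})=R_1(u^{R(\mathfrak{d}^\ast)})$, giving $1+0=1$, or $R(\mathfrak{d}^\ast)=R(\mathfrak{d})$ while the relevant suffix of the left boundary word gains exactly one $1$-entry between the two stopping times, giving $0+1=1$.

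The main obstacle is the exhaustive case analysis: one must carefully keep track of the $\operatorname{HeightR}$ values for both drifters (since they determine the cutoff in the definition of $R_1$) and of the precise change to the left boundary word when $\mathfrak{d}$ is absorbed ahead of $\mathfrak{d}^\ast$. A possibly cleaner alternative would be to argue directly in the blue-red path tangle representation used in the proof of Lemma~\ref{Lemma:Moves_enumerated_exc_2}, where each blockade corresponds to a specific local pattern of blue and red path segments; the reduced identity then becomes a combinatorial statement about how the endpoints of these segments shift under the local moves triggered by right-Wieland drift on the blockade.
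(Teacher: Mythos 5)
Your opening reduction is sound: applying Proposition~\ref{Prop:Path_drifter_exc_2} to both drifters and subtracting does turn the claim into the equivalent identity $R(\mathfrak{d}^\ast)-R(\mathfrak{d})+R_1(u^{R(\mathfrak{d})})-R_1(u^{R(\mathfrak{d}^\ast)})=1$, and there is no circularity since that proposition is established before this lemma. The gap is in what you propose to do next. The quantities $R(\mathfrak{d})$, $R(\mathfrak{d}^\ast)$ and the words $u^{R(\mathfrak{d})}$, $u^{R(\mathfrak{d}^\ast)}$ are global: they depend on the entire portion of the TFPL between the blockade and the left boundary, which is arbitrary, so ``tracing the moves'' under iterated right-Wieland drift is not a finite case analysis over blockade types. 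Your asserted dichotomy (either $R(\mathfrak{d}^\ast)=R(\mathfrak{d})+1$ with equal $R_1$-values, or $R(\mathfrak{d}^\ast)=R(\mathfrak{d})$ with the relevant suffix gaining exactly one $1$) is precisely the content that needs proof, and nothing local to the blockade justifies it; moreover the drifters are not always positioned with $\mathfrak{d}^\ast$ one column to the right of $\mathfrak{d}$ --- the blockade list contains several distinct relative placements, as the varying entries of the paper's table for $\operatorname{D}_{NW}$ and $\operatorname{D}_{NE}$ show.

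The missing idea --- and the one the paper's proof supplies --- is to convert the global move counts into static geometric data \emph{before} specializing to a blockade. The paper forms the combination $\operatorname{D}_{NW}-\operatorname{Blue}-\operatorname{D}_{NE}+\operatorname{Red}$ of differences of the move counts appearing in Lemma~\ref{Lemma:Moves_enumerated_exc_2}; an algebraic computation identifies this combination with $L(\mathfrak{d})-L(\mathfrak{d}^\ast)-L_0(v^{L(\mathfrak{d})})+L_0(v^{L(\mathfrak{d}^\ast)})$, while the arguments proving Lemma~\ref{Lemma:Moves_enumerated_exc_2} identify each of the four summands with a difference of positional data of the two down steps in the blue-red path tangle (which diagonal each center lies on, and how many red respectively blue paths separate each from the relevant boundary). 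Because the two down steps sit in one of finitely many adjacent local patterns determined by the blockade, these four differences are readable from the local picture, and the verification becomes a finite table check. Your closing remark about working in the blue-red path tangle representation points in exactly this direction, but as written the proposal stops short of supplying this mechanism, so the case analysis it relies on cannot actually be carried out.
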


The crucial idea for the proof of Lemma~\ref{Lemma:Distance_blockades_exc_2} is to consider TFPLs of excess at most $2$ together with their canonical orientation and then represent them in terms of blue-red path tangles. 
When doing so the blockades in Lemma~\ref{Lemma:Blockades_exc_2} translate into the blockades depicted in Figure~\ref{Fig:Oriented_blockades_exc_2}.

\begin{figure}[tbh]
\centering
\includegraphics[width=1\textwidth]{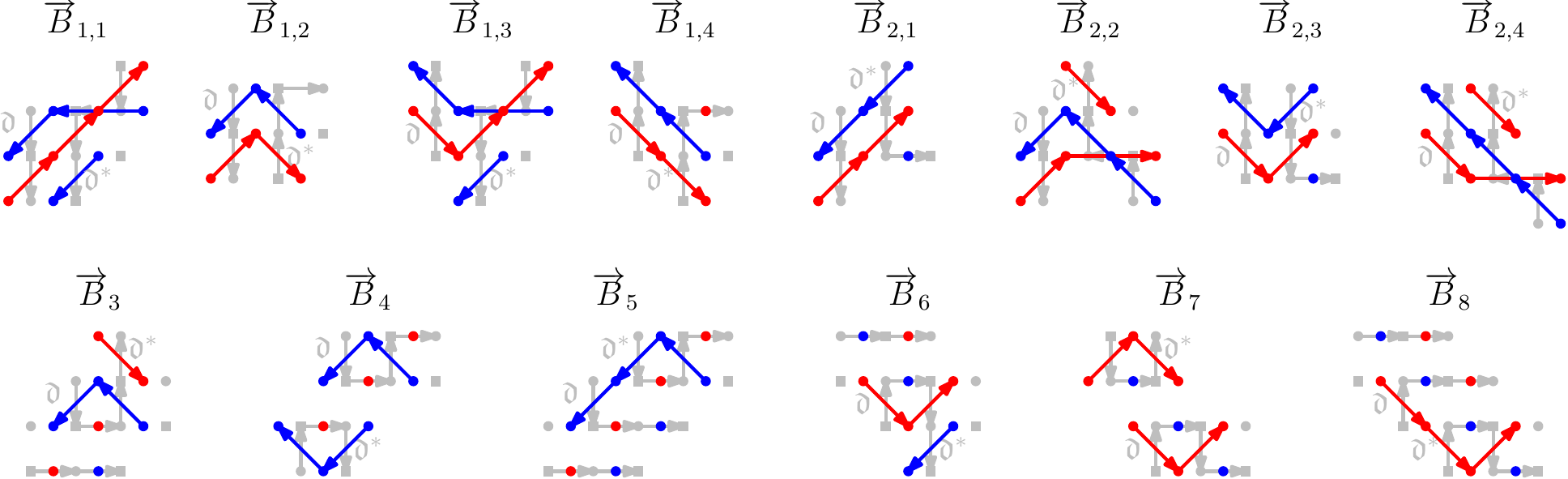}
\caption{The blockades of Lemma~\ref{Lemma:Blockades_exc_2} represented in terms of oriented TFPLs and in terms of blue-red path tangles.}
\label{Fig:Oriented_blockades_exc_2}
\end{figure}

\begin{proof}
In the following, for $(i,j)=(1,1),(1,2),(1,3),(1,4),(2,1),(2,2),(2,3),(3,1),(3,2),(3,3)$ set 
\begin{align}
\#\overrightarrow{ML}_{i,j}(\mathfrak{d}) & = \vert\{ 0\leq \ell\leq  L(\mathfrak{d})-1: \textnormal{ by }  \operatorname{WL} \textnormal{ the move } \overrightarrow{M}_{i,j} \textnormal{ is applied to } \mathfrak{d} \textnormal{ in }
\operatorname{WL}^\ell(\overrightarrow{f}) \}\vert.
\notag\end{align}
The numbers $\#\overrightarrow{ML}_{4}^b(\mathfrak{d})$, $\#\overrightarrow{ML}_{4}^t(\mathfrak{d})$, $\#\overrightarrow{ML}_{5}^b(\mathfrak{d})$ and
$\#\overrightarrow{ML}_{5}^t(\mathfrak{d})$ are defined analogously. Furthermore, set 

\begin{align}
\operatorname{D}_{NW}(\mathfrak{d},\mathfrak{d}^\ast) & = \sum\limits_{i=1}^4\#\overrightarrow{ML}_{1,i}(\mathfrak{d})+\sum\limits_{j=1}^3 \#\overrightarrow{ML}_{3,j}(\mathfrak{d})+
\#\overrightarrow{ML}_4^t(\mathfrak{d})+\#\overrightarrow{ML}_5^b(\mathfrak{d})\notag\\
& - \sum\limits_{i=1}^4\#\overrightarrow{ML}_{1,i}(\mathfrak{d}^\ast)-\sum\limits_{j=1}^3 \#\overrightarrow{ML}_{3,j}(\mathfrak{d}^\ast)-
\#\overrightarrow{ML}_4^t(\mathfrak{d}^\ast)-\#\overrightarrow{ML}_5^b(\mathfrak{d}^\ast),\notag\\
\operatorname{D}_{NE}(\mathfrak{d},\mathfrak{d}^\ast) & = N-\operatorname{HeightL}(\mathfrak{d})-\sum\limits_{i=1}^4\#\overrightarrow{ML}_{1,i}(\mathfrak{d})-
\sum\limits_{j=1}^3 \#\overrightarrow{ML}_{2,j}(\mathfrak{d})
-\#\overrightarrow{ML}_4^b(\mathfrak{d})-\#\overrightarrow{ML}_5^t(\mathfrak{d})\notag\\
& -N+\operatorname{HeightL}(\mathfrak{d}^\ast)+\sum\limits_{i=1}^4\#\overrightarrow{ML}_{1,i}(\mathfrak{d}^\ast)+\sum\limits_{j=1}^3 \#\overrightarrow{ML}_{2,j}(\mathfrak{d}^\ast)+
\#\overrightarrow{ML}_4^b(\mathfrak{d}^\ast)+\#\overrightarrow{ML}_5^t(\mathfrak{d}^\ast),\notag\\
\operatorname{Red}(\mathfrak{d},\mathfrak{d}^\ast) & = L_1(v^{L(\mathfrak{d})})-\#\overrightarrow{ML}_{1,1}(\mathfrak{d})-\#\overrightarrow{ML}_{1,4}(\mathfrak{d})
 -\#\overrightarrow{ML}_{2,3}(\mathfrak{d})+\#\overrightarrow{ML}_{3,3}(\mathfrak{d})-\#\overrightarrow{ML}_{4}^b(\mathfrak{d})\notag\\
& -L_1(v^{L(\mathfrak{d}^\ast)})+\#\overrightarrow{ML}_{1,1}(\mathfrak{d}^\ast)+\#\overrightarrow{ML}_{1,4}(\mathfrak{d}^\ast)
 +\#\overrightarrow{ML}_{2,3}(\mathfrak{d}^\ast)-\#\overrightarrow{ML}_{3,3}(\mathfrak{d}^\ast)+\#\overrightarrow{ML}_{4}^b(\mathfrak{d}^\ast),\notag\\
\operatorname{Blue}(\mathfrak{d},\mathfrak{d}^\ast) & =  \#\overrightarrow{ML}_{1,2}(\mathfrak{d})+\#\overrightarrow{ML}_{1,3}(\mathfrak{d})
 -\#\overrightarrow{ML}_{2,3}(\mathfrak{d})+\#\overrightarrow{ML}_{3,3}(\mathfrak{d})-\#\overrightarrow{ML}_{4}^b(\mathfrak{d})\notag\\
 & - \#\overrightarrow{ML}_{1,2}(\mathfrak{d}^\ast)-\#\overrightarrow{ML}_{1,3}(\mathfrak{d}^\ast)
 + \#\overrightarrow{ML}_{2,3}(\mathfrak{d}^\ast)-\#\overrightarrow{ML}_{3,3}(\mathfrak{d}^\ast)+\#\overrightarrow{ML}_{4}^b(\mathfrak{d}^\ast). 
\notag\end{align}

An easy computation shows that 

\begin{align}
\operatorname{D}_{NW}(\mathfrak{d},\mathfrak{d}^\ast)-\operatorname{Blue}(\mathfrak{d},\mathfrak{d}^\ast)-\operatorname{D}_{NE}(\mathfrak{d},\mathfrak{d}^\ast)
+\operatorname{Red}(\mathfrak{d},\mathfrak{d}^\ast)
=L(\mathfrak{d})-L(\mathfrak{d}^\ast)-L_0(v^{L(\mathfrak{d})})+L_0(v^{L(\mathfrak{d}^\ast)}).
\notag\end{align} 

It remains to show that the lefthand side of the above equation equals $1$. For that purpose, alternative interpretations of the integers 
$\operatorname{D}_{NW}(\mathfrak{d},\mathfrak{d}^\ast)$, $\operatorname{D}_{NE}(\mathfrak{d},\mathfrak{d}^\ast)$, 
$\operatorname{Red}(\mathfrak{d},\mathfrak{d}^\ast)$ and $\operatorname{Blue}(\mathfrak{d},\mathfrak{d}^\ast)$ are considered which derive from Lemma~\ref{Lemma:Moves_enumerated_exc_2}. 

From now on, consider the blue-red path tangle associated with $\operatorname{WL}(\overrightarrow{f})$ for each $0\leq \ell\leq L(\mathfrak{d})$.  
As a start, 
$\operatorname{D}_{NW}(\mathfrak{d},\mathfrak{d}^\ast)$ equals the difference of the number of $\backslash$-diagonals of red vertices to the right of the one 
whereon the center of the down step corresponding to $\mathfrak{d}$ lies and the number of $\backslash$-diagonals of red vertices to the right of the one whereon the center of the down step corresponding to 
$\mathfrak{d}^\ast$ lies by the same arguments as in the proof of Lemma~\ref{Lemma:Moves_enumerated_exc_2}(1).

Furthermore, $\operatorname{D}_{NE}(\mathfrak{d},\mathfrak{d}^\ast)$ equals the number of $/$-diagonals of blue vertices to the left of the one whereon the center of the 
down step corresponding to $\mathfrak{d}$ lies and the number of $/$-diagonals of blue vertices to the left of the one whereon the center of the down step corresponding to $\mathfrak{d}^\ast$ lies by the same 
arguments as in the proof of Lemma~\ref{Lemma:Moves_enumerated_exc_2}.

Next, $\operatorname{Red}(\mathfrak{d},\mathfrak{d}^\ast)$ equals the difference of the number of red paths $\mathfrak{d}$ "jumps over" in the process of moving to the left boundary by repeatedly applying right-Wieland 
drift and the number of red paths $\mathfrak{d}^\ast$ "jumps over" in the process of moving to the left boundary by the iterated application of right-Wieland drift by the same arguments as in the proof of 
Lemma~\ref{Lemma:Moves_enumerated_exc_2}(3). To be more precise, a blue down step ``jumps over'' a red path by the application of $\operatorname{WR}$ if before the application of $\operatorname{WR}$ it is in the area below 
the red path and after the application it is in the area above the red path. On the other hand, a red down step ``jumps over'' a red path if after the application of $\operatorname{WR}$ 
it is a blue down step that lies in the area above the red path. 

Finally, $\operatorname{Blue}(\mathfrak{d},\mathfrak{d}^\ast)$ is the difference of the number of blue paths $\mathfrak{d}$ "jumps over" in the process of moving to the right boundary by repeatedly applying left-Wieland 
drift and the number of blue paths $\mathfrak{d}^\ast$ "jumps over" in the process of moving to the right boundary by the iterated application of left-Wieland drift by the same arguments as in the proof of 
Lemma~\ref{Lemma:Moves_enumerated_exc_2}(4). To be more precise, a red down step ``jumps over'' a blue path by the application of $\operatorname{WL}$ if before the application of $\operatorname{WL}$ it lies in the area below 
the blue path and after the application it is in the area above the blue path. On the other hand, a blue down step ``jumps over'' a blue path if after the application of $\operatorname{WL}$ 
it is a red down step that lies in the area above the red path. 

Thus, for each blockade the integers $\operatorname{D}_{NW}(\mathfrak{d},\mathfrak{d}^\ast)$, $\operatorname{D}_{NE}(\mathfrak{d},\mathfrak{d}^\ast)$, 
$\operatorname{Red}(\mathfrak{d},\mathfrak{d}^\ast)$ and $\operatorname{Blue}(\mathfrak{d},\mathfrak{d}^\ast)$ can be computed separately by looking at Figure~\ref{Fig:Oriented_blockades_exc_2}.
In Table~\ref{Table:Blockades_exc_2}, $\operatorname{D}_{NW}(\mathfrak{d},\mathfrak{d}^\ast)$, $\operatorname{D}_{NE}(\mathfrak{d},\mathfrak{d}^\ast)$, 
$\operatorname{Red}(\mathfrak{d},\mathfrak{d}^\ast)$ and $\operatorname{Blue}(\mathfrak{d},\mathfrak{d}^\ast)$ are listed. 

\begin{table}[tbh]
\begin{center}
\begin{tabular}{c !{\vrule width 1.25pt}c|c|c|c|c|c|c|c|c|c|c|c|c|c}
& $\overrightarrow{B}_{1,1}$ & $\overrightarrow{B}_{1,2}$ & $\overrightarrow{B}_{1,3}$ & $\overrightarrow{B}_{1,4}$ & $\overrightarrow{B}_{2,1}$ &  $\overrightarrow{B}_{2,2}$ &  $\overrightarrow{B}_{2,3}$ &
 $\overrightarrow{B}_{2,4}$ &  $\overrightarrow{B}_{3}$ &  $\overrightarrow{B}_{4}$ & $\overrightarrow{B}_{5}$ & $\overrightarrow{B}_{6}$ & $\overrightarrow{B}_{7}$ & $\overrightarrow{B}_{8}$\\  
\noalign{\hrule height 1.25pt}
$\operatorname{D}_{NW}(\mathfrak{d},\mathfrak{d}^\ast)$ & $0$ & $0$ & $0$ & $0$ &  $1$ &  $1$ & $1$ & $1$ & $1$ & $-1$ & $1$ & $0$ &  $1$ & $0$ \\
\hline
$\operatorname{D}_{NE}(\mathfrak{d},\mathfrak{d}^\ast)$ & $-1$ & $-1$ & $-1$ & $-1$ & $0$ & $0$ & $0$ & $0$ & $0$ & $-1$ & $0$ & $-1$ & $1$ & $-1$ \\
\hline
$\operatorname{Red}(\mathfrak{d},\mathfrak{d}^\ast)$ & $-1$ & $0$ & $-1$ & $0$ & $0$ & $1$ & $0$ & $1$ & $1$ & $0$ & $0$ & $-1$ & $1$ & $0$ \\
\hline
$\operatorname{Blue}(\mathfrak{d},\mathfrak{d}^\ast)$ & $-1$ & $0$ & $-1$ & $0$ & $0$ & $1$ & $0$ & $1$ & $1$ & $-1$ & $0$ & $-1$ & $0$ & $0$ \\
\end{tabular}
\end{center}
\caption{The numbers $\operatorname{D}_{NW}(\mathfrak{d},\mathfrak{d}^\ast)$, $\operatorname{D}_{NE}(\mathfrak{d},\mathfrak{d}^\ast)$,
$\operatorname{Red}(\mathfrak{d},\mathfrak{d}^\ast)$ and $\operatorname{Blue}(\mathfrak{d},\mathfrak{d}^\ast)$
computed separately for each type of blockade.}
\label{Table:Blockades_exc_2}
\end{table}

In summary, it follows that 
\begin{equation}
\operatorname{D}_{NW}(\mathfrak{d},\mathfrak{d}^\ast)-\operatorname{Blue}(\mathfrak{d},\mathfrak{d}^\ast)-\operatorname{D}_{NE}(\mathfrak{d},\mathfrak{d}^\ast)
+\operatorname{Red}(\mathfrak{d},\mathfrak{d}^\ast)=1.
\notag\end{equation}
Therefore, $L(\mathfrak{d})-L(\mathfrak{d}^\ast)-L_0(v^{L(\mathfrak{d})})+L_0(v^{L(\mathfrak{d}^\ast)})=1$.
\end{proof}

Proposition~\ref{Prop:One_drifter_left_one_right_exc_2} follows immediately from Lemma~\ref{Lemma:Distance_blockades_exc_2}.

\begin{proof}[Proof of Proposition~\ref{Prop:One_drifter_left_one_right_exc_2}]
Let $f$ be an instable TFPL of excess $2$ that contains two drifters $\mathfrak{d}_{\mathfrak{l}}$ and $\mathfrak{d}_{\mathfrak{r}}$ such that 
\hbox{$R(\mathfrak{d}_{\mathfrak{r}})\leq R_1(u^{R(\mathfrak{d}_{\mathfrak{r}})})$}
and $L(\mathfrak{d}_{\mathfrak{l}})\leq L_0(v^{L(\mathfrak{d}_{\mathfrak{l}})})$. 
Without loss of generality, suppose that $\mathfrak{d}_\mathfrak{l}$ cannot be moved to the right boundary using the moves $M_1$, $M_2$ and $M_3$. 
Then, by Lemma~\ref{Lemma:Distance_blockades_exc_2}
\begin{equation}
L(\mathfrak{d}_\mathfrak{l})\geq L(\mathfrak{d}_\mathfrak{r})+L_0(v^{L(\mathfrak{d}_\mathfrak{l})})-L_0(v^{L(\mathfrak{d}_\mathfrak{r})})+1.
\notag\end{equation} 
Thus, 
$L_0(v^{L(\mathfrak{d}_\mathfrak{r})})> L(\mathfrak{d}_\mathfrak{r})$ and equivalently $R(\mathfrak{d}_{\mathfrak{r}})> R_1(u^{R(\mathfrak{d}_\mathfrak{r})})+1$. That is a contradiction. 
Therefore, $\mathfrak{d}_\mathfrak{r}$ can be moved to the right boundary using the moves $M_1$, $M_2$ and $M_3$.

By vertical symmetry, $\mathfrak{d}_\mathfrak{l}$ can be moved to the left boundary by the moves $M_1^{-1}$, $M_2^{-1}$ or $M_3^{-1}$.
\end{proof}

In Figure~\ref{Fig:Example_bijection_exc_2_1}, the instable TFPL of excess $2$ of Figure~\ref{Fig:Path_drifter_example_exc_2} and the triple $(S,g,T)$ associated with it are depicted.   

\begin{figure}[tbh]
\includegraphics[width=1\textwidth]{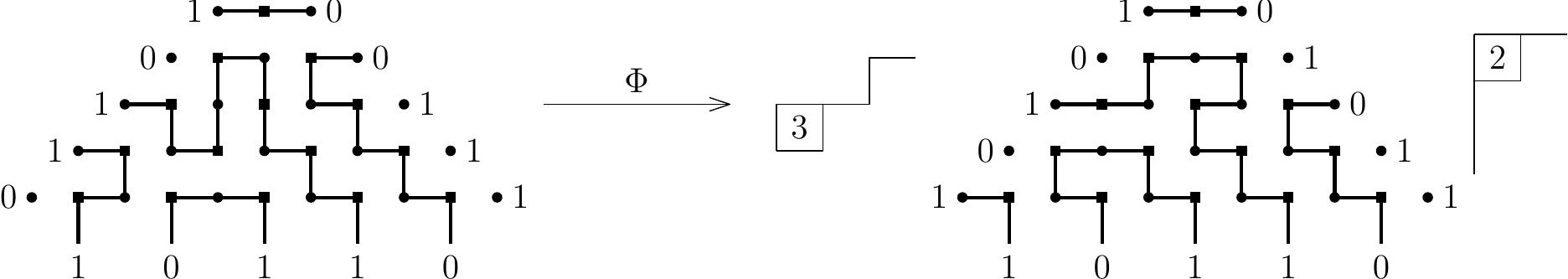}
\caption{The instable TFPL of Figure~\ref{Fig:Path_drifter_example_exc_2} and the triple $(S,g,T)$ it is associated with: $S$ is a semi-standard Young tableau of skew shape $\lambda(10110)/\lambda(01101)$, 
$g$ is a stable TFPL with boundary $(10110,00111;10110)$ and $T$ is the empty semi-standard Young tableau of skew shape $\lambda(00111)'/\lambda(00111)'$.}
\label{Fig:Example_bijection_exc_2_1}
\end{figure}

\begin{proof}[Proof of Theorem~\ref{Thm:Expressing_in_stable_TFPLs_exc_2}] Let $u,v,w$ be words of length $N$ satisfying $exc(u,v;w)\leq 2$.
Furthermore, let $f\in T_{u,v}^w$ be instable and denote by $(S,g,T)$ the image of $f$ under $\Phi$. 
As a start, $S\in G_{\lambda(u),\lambda(u^+)}$ for the following reason: let $c$ be a cell of the 
Young diagram of skew shape $\lambda(u^+)/\lambda(u)$ then its entry in $S$ has to be $R(\mathfrak{d})+1$ for a drifter $\mathfrak{d}$ in $f$ and it has to hold $R(\mathfrak{d})\leq R_1(u^{R(\mathfrak{d})})$ 
by the definition of $\Phi$. Since $R_1(u^{R(\mathfrak{d})})$ is the number of columns to the right of $c$ the entry of $c$ can at most be the number of columns to the right of $c$ plus one. 
By analogous arguments, $T\in G_{\lambda(v)',\lambda(v^+)'}$. 

Thus, it remains to show that $\Phi$ is a bijection. This is done by giving the inverse map: let $u^+\geq u$, $v^+\geq v$, $S\in G_{\lambda(u),\lambda(u^+)}$, $g\in S_{u^+,v^+}^w$ and 
$T\in G_{\lambda(v)',\lambda(v^+)'}$ and consider the sequences 
\begin{equation}
\lambda(u)=\lambda(u^{0})\subseteq \lambda(u^{1})\subseteq \cdots \subseteq  \lambda(u^{R}) \subseteq \lambda(u^{R+1})=\lambda(u^{+})
\notag\end{equation}
corresponding to $S$ so that $R+1$ is the largest entry of $S$ and 
\begin{equation}
\lambda(v)'=\lambda(v^{0})'\subseteq \lambda(v^{1})'\subseteq \cdots \subseteq \lambda(v^{L})'\subseteq \lambda(v^{L+1})'
\notag\end{equation}
corresponding to $T$ so that $L+1$ is the largest entry of $T$. 
Then associate $(S,g,T)$ with a TFPL $\Psi(S,g,T)$ in $T_{u,v}^w$ as follows: 
\begin{enumerate}
\item If $u^+>u$ and $v^+=v$, then set $\Psi(S,g,T)=(\operatorname{WL}_{u^{0}}\circ\operatorname{WL}_{u^{1}}\circ \cdots \circ \operatorname{WL}_{u^{R-1}}\circ\operatorname{WL}_{u^{R}})(g)$.
\item If $u^+=u$ and $v^+>v$, then set $\Psi(S,g,T)=(\operatorname{WR}_{v^{0}}\circ\operatorname{WR}_{v^{1}}\circ \cdots \circ \operatorname{WR}_{v^{L-1}}\circ\operatorname{WR}_{v^{L}})(g)$. 
\item If $u^+>u$ and $v^+>v$, then $\Psi(S,g,T)$ is the TFPL obtained from $g$ as follows: 
since $u^+>u$ and $v^+>v$ the skew shaped Young diagrams $\lambda(u^+)/\lambda(u)$ and $\lambda(v^+)'/\lambda(v)'$ both consist of precisely one cell. Hence, denote by 
$j-1$ the number of columns to the right of the one cell $\lambda(u^+)/\lambda(u)$ contains and by $i_j$ the index of the $j$-th one in $u^+$. On the other hand, denote by 
$j'-1$ the number of columns to the right of the one cell $\lambda(v^+)'/\lambda(v)'$ contains and by $i_j'$ the index of the $j'$-th zero in $v^+$.
Now, a drifter $\mathfrak{d}_\mathfrak{l}$ incident to $L_{i_j+1}$ in $g$ is inserted whereas the horizontal edge incident to $L_{i_j}$ is deleted, 
a drifter $\mathfrak{d}_\mathfrak{r}$ incident to $R_{i_j'-1}$ in $g$ is inserted, whereas the horizontal edge incident to $R_{i_j'}$ is deleted, 
$\mathfrak{d}_\mathfrak{l}$ is moved $L$ times by a move $M_1$, $M_2$ or $M_3$ and $\mathfrak{d}_\mathfrak{r}$ is moved $R$ times by a move $M_1^{-1}$, $M_2^{-1}$ or $M_3^{-1}$. The so-obtained TFPL is the 
image of $(S,g,T)$ under $\Psi$.
\item If $u^+=u$ and $v^+=v$, then $\Psi(S,g,T)=g$.
\end{enumerate}
It can easily be seen that $\Psi$ is the inverse map of $\Phi$.
\end{proof}

\bibliographystyle{plain}

\end{document}